\documentclass[a4paper,10pt]{amsart}
\usepackage[top=0.8in, bottom=0.8in, left=1.25in, right=1.25in]{geometry}
\usepackage{fancybox}
\usepackage{amsmath}
\usepackage{amssymb}
\usepackage{amsthm}
\usepackage{mathrsfs}
\usepackage{epstopdf}
\usepackage[all]{xy}
\usepackage{epstopdf}
\usepackage{array}
\usepackage{amscd}
\usepackage{color}
\usepackage{hyperref}
\usepackage[colorinlistoftodos]{todonotes}

\usepackage{amsfonts}
\usepackage{amssymb}
\usepackage{mathrsfs}
\usepackage{tikz}

\setcounter{section}{-1}

\DeclareFontFamily{U}{rsfs}{%
\skewchar\font127}
\DeclareFontShape{U}{rsfs}{m}{n}{%
<-6>rsfs5<6-8.5>rsfs7<8.5->rsfs10}{}
\DeclareSymbolFont{rsfs}{U}{rsfs}{m}{n}
\DeclareSymbolFontAlphabet
{\mathrsfs}{rsfs}
\DeclareRobustCommand*\rsfs{%
\@fontswitch\relax\mathrsfs}

\theoremstyle{plain}
\newtheorem{thm}{Theorem}[section]
\newtheorem{prop}[thm]{Proposition}
\newtheorem{lem}[thm]{Lemma}

\newtheorem{defi}[thm]{Definition}
\newtheorem{rmk}[thm]{Remark}
\newtheorem{cor}[thm]{Corollary}

\newtheorem{prop-defi}[thm]{Proposition-Definition}
\newtheorem{thm-defi}[thm]{Theorem-Definition}
\newtheorem{lem-defi}[thm]{Lemma-Definition}
\newtheorem{cor-defi}[thm]{Corollary-Definition}

\newtheorem{conj}[thm]{Conjecture}

\newdimen\argwidth
\def\db[#1\db]{
 \setbox0=\hbox{$#1$}\argwidth=\wd0
 \setbox0=\hbox{$\left[\box0\right]$}
  \advance\argwidth by -\wd0
 \left[\kern.3\argwidth\box0 \kern.3\argwidth\right]}

\newcommand{\cC}{\mathcal{C}}

\newcommand{\eE}{\mathcal{E}}

\newcommand{\hH}{\mathcal{H}}

\newcommand{\lL}{\mathcal{L}}

\newcommand{\oO}{\mathcal{O}}

\newcommand{\Hom}{\mathop{\rm Hom}\nolimits}

\newcommand{\dR}{\mathbf{R}}

\newcommand{\Hilb}{\mathop{\rm Hilb}\nolimits}

\newcommand{\Pic}{\mathop{\rm Pic}\nolimits}

\newcommand{\ch}{\mathop{\rm ch}\nolimits}

\newcommand{\Ext}{\mathop{\rm Ext}\nolimits}
\newcommand{\Spec}{\mathop{\rm Spec}\nolimits}

\newcommand{\Coh}{\mathop{\rm Coh}\nolimits}

\newcommand{\ev}{\mathop{\rm ev}\nolimits}

\newcommand{\cneq}{\mathrel{\raise.095ex\hbox{:}\mkern-4.2mu=}}
\newcommand{\eqcn}{\mathrel{=\mkern-4.5mu\raise.095ex\hbox{:}}}

\newcommand{\DT}{\mathop{\rm DT}\nolimits}

\makeatletter
 
  \@addtoreset{equation}{section}
\makeatother

\title[Genus zero GV type invariants for CY 4-folds II: Fano 3-folds]{Genus zero Gopakumar-Vafa type invariants for Calabi-Yau 4-folds II: Fano 3-folds}
\date{}
\author{Yalong Cao}
%\address{Mathematical Institute, University of Oxford, Andrew Wiles Building, Radcliffe Observatory Quarter, Woodstock Road, Oxford, OX2 6GG, U.K.}
\address{Kavli Institute for the Physics and Mathematics of the Universe (WPI),The University of Tokyo Institutes for Advanced Study, The University of Tokyo, Kashiwa, Chiba 277-8583, Japan}
\email{yalong.cao@ipmu.jp}
%\thanks{The author is supported by The Royal Society Newton International Fellowship.}

\begin{document}
\maketitle

\begin{abstract}
In analogy with the Gopakumar-Vafa (GV) conjecture on Calabi-Yau (CY) 3-folds, Klemm and Pandharipande defined GV type invariants on Calabi-Yau 4-folds using 
Gromov-Witten theory and conjectured their integrality. In a joint work with Maulik and Toda, the author conjectured 
their genus zero invariants are $\mathrm{DT_4}$ invariants of one dimensional stable sheaves.
In this paper, we study this conjecture on the total space of canonical bundle of a Fano 3-fold $Y$, which reduces to a relation between
twisted GW and $\mathrm{DT_3}$ invariants on $Y$. 
Examples are computed for both compact and non-compact Fano 3-folds to support our conjecture. 
\end{abstract}

%${}$ \\\textbf{ORCID}: 0000-0001-5805-3242

${}$ \\
\textbf{MSC 2010}: 14N35, 14J32, 14J45

%\tableofcontents

\section{Introduction}

\subsection{$\rm{GV/DT_4}$ conjecture on CY 4-folds}
Gromov-Witten invariants are rational numbers which virtually count stable maps from complex curves to algebraic varieties (or symplectic manifolds).
Because of multiple-cover contributions, they are in general not integers and hence are not honest enumerative invariants. 
On Calabi-Yau 3-folds, Gopakumar-Vafa \cite{GV} conjectured the existence of integral invariants which determine Gromov-Witten invariants. 
In~\cite{KP}, Klemm-Pandharipande proposed a parallel conjecture on Calabi-Yau 4-folds. 

More specifically, let $X$ be a smooth projective Calabi-Yau 4-fold, 
fix $\beta\in H_2(X,\mathbb{Z})$ and $n\geqslant 0$. For integral classes $\gamma_i \in H^{m_i}(X, \mathbb{Z}), \
1\leqslant i\leqslant n$, one defines genus zero GW invariants
 \begin{equation}
\mathrm{GW}_{0, \beta}(X)(\gamma_1, \ldots, \gamma_n)
:=\int_{[\overline{M}_{0, n}(X, \beta)]^{\rm{vir}}}
\prod_{i=1}^n \mathrm{ev}_i^{\ast}(\gamma_i),
\nonumber \end{equation}
where $\mathrm{ev}_i \colon \overline{M}_{0, n}(X, \beta)\to X$
%\begin{align*}
%\mathrm{ev}_i \colon \overline{M}_{g, n}(X, \beta)
%\to X
%\end{align*}
is the $i$-th evaluation map. 

The \textit{Gopakumar-Vafa type invariants}
\begin{align*}
n_{0, \beta}(X)(\gamma_1, \ldots, \gamma_n) \in \mathbb{Q}
\end{align*}
are defined in terms of the identity
\begin{align*}
\sum_{\beta>0}\mathrm{GW}_{0, \beta}(X)(\gamma_1, \ldots, \gamma_n)\,q^{\beta}=
\sum_{\beta>0}n_{0, \beta}(X)(\gamma_1, \ldots, \gamma_n) \sum_{d=1}^{\infty}
d^{n-3}\,q^{d\beta},
\end{align*}
and conjectured to be integers \cite{KP}.

In \cite{CMT}, Cao-Maulik-Toda gave a sheaf-theoretic interpretation of the above GV type invariants in terms of
Donaldson-Thomas invariants on CY 4-folds (called $\mathrm{DT}_4$ invariants)
introduced by Cao-Leung~\cite{CL} and Borisov-Joyce~\cite{BJ}.
More specifically, we consider the moduli scheme $M_{X,\beta}$ of one dimensional stable sheaves on $X$ with Chern character $(0,0,0,\beta,1)$.
By the results of~\cite{CL, BJ}, and the recent orientability result \cite{CGJ}, 
%assuming the existence of an orientation on certain (real) line bundle over this space,
there exists a $\DT_4$ virtual class
\begin{align*} 
[M_{X,\beta}]^{\rm{vir}} \in H_{2}(M_{X,\beta}, \mathbb{Z}),
\end{align*}
which depends on the choice of an orientation, i.e. on each connected component
of $M_{X,\beta}$, there are two choices of orientations, which affect the corresponding contribution to the virtual class by a sign (for each connected component). 

Since the virtual dimension is not zero, we require insertions to define invariants.
Let
\begin{align*}
\tau \colon H^{m}(X)\to H^{m-2}(M_{X,\beta}), \quad
\tau(\gamma):=\pi_{M\ast}(\pi_X^{\ast}\gamma \cup\ch_3(\eE) ),
\end{align*}
where $\pi_X$, $\pi_M$ are projections from $X \times M_{X,\beta}$
to corresponding factors, and $\ch_3(\eE)$ is the
Poincar\'e dual of the
fundamental cycle of the universal sheaf $\eE$.

For $\gamma_i \in H^{m_i}(X, \mathbb{Z})$
with $1\leqslant i\leqslant n$, 
the $\mathrm{DT}_{4}$ invariant is defined by
\begin{align*}\mathrm{DT_{4}}(X)(\beta \mid \gamma_1,\ldots,\gamma_n):=\int_{[M_{X,\beta}]^{\rm{vir}}} \prod_{i=1}^{n}\tau(\gamma_i). 
 \end{align*}

\begin{conj}\label{conj:GW/DT_4 intro}\emph{(\cite[Conjecture 1.3]{CMT})}
For a suitable choice of orientation, we
have the identity
\begin{align*}
n_{0,\beta}(X)(\gamma_1, \ldots, \gamma_n)=
\mathrm{DT}_{4}(X)(\beta \mid \gamma_1, \ldots, \gamma_n),
\end{align*}
i.e. we have a multiple cover formula
\begin{align*}
\mathrm{GW}_{0, \beta}(X)(\gamma_1, \ldots, \gamma_n)=
\sum_{k|\beta}\frac{1}{k^{3-n}}\cdot\mathrm{DT}_{4}(X)(\beta/k \mid \gamma_1, \ldots, \gamma_n).
\end{align*}
\end{conj}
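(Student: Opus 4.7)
The plan is to specialize to the non-compact Calabi-Yau 4-fold $X$ equal to the total space of $K_Y$ for a smooth projective Fano 3-fold $Y$, and to exploit the fiberwise $\mathbb{C}^\ast$-action scaling the canonical bundle. This action lifts to both moduli in play: to $\overline{M}_{0,n}(X,\beta)$ on the Gromov-Witten side, and to the moduli $M_{X,\beta}$ of one-dimensional stable sheaves on the $\DT_4$ side. In both cases the torus-fixed loci are supported on the zero-section $Y \hookrightarrow X$. I would therefore apply virtual $\mathbb{C}^\ast$-localization on each side and match the two resulting expressions on $Y$.

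On the GW side, the fixed locus is $\overline{M}_{0,n}(Y,\beta)$ and the virtual normal contribution yields a twisted genus zero GW invariant of $Y$ associated with the obstruction bundle $R^\bullet\pi_\ast f^\ast K_Y$. On the $\DT_4$ side I would develop a $\DT_4$-localization formula in the spirit of Cao-Leung and Borisov-Joyce, whose fixed-point contribution involves the square-root Euler class of the moving part of the virtual tangent complex, weighted by the orientation sign. The fixed locus identifies with the moduli of one-dimensional stable sheaves on $Y$, whose contribution can be recast in terms of ordinary $\DT_3$ (or PT) invariants of $Y$. Taking the non-equivariant limit on each side, which must exist by the Calabi-Yau dimension count, reduces the conjecture to an identity between twisted genus zero GW invariants and $\DT_3$-type invariants on the Fano 3-fold $Y$ -- exactly the reduction announced in the abstract.

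Having made this reduction, I would test the resulting identity in cases where explicit computation is feasible: on $Y = \mathbb{P}^3$ and low-degree Fano hypersurfaces, where twisted GW invariants and $\DT_3$ invariants are both accessible via localization or intersection theory on the relevant Hilbert schemes of curves; and on toric non-compact Fanos, where a further Atiyah-Bott localization trivializes both sides. A successful match in such families supplies the supporting evidence promised in the abstract, checking in particular the multiple-cover coefficient $1/k^{3-n}$ in the predicted identity.

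The main obstacle is the $\DT_4$-localization step itself. The square-root Euler class is not canonically functorial, and the orientation must be tracked connected-component by connected-component, so matching signs between the GW and $\DT_4$ sides is delicate and must be verified in each example rather than deduced formally. A secondary obstacle is the reduced twisted GW/$\DT_3$ identity on $Y$: in full generality it is a genus zero multiple-cover statement whose proof appears to require input from a local GW/PT-type correspondence on $Y$. For these reasons the argument should be read as a reduction together with example-based verifications, not as a full proof of the conjecture.
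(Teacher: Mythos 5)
The statement you are asked about is a conjecture (imported from \cite{CMT}), and the paper contains no proof of it; it only restricts to $X=K_Y$ for a Fano 3-fold $Y$, reformulates the conjecture there, and verifies the reformulation in examples. Your proposal correctly reconstructs exactly this program — reduction to a twisted GW/$\DT_3$ identity on $Y$ plus example-based checks — and you are right to flag that it is not a proof. The one substantive difference in mechanism: you propose to carry out the reduction by fiberwise $\mathbb{C}^\ast$-localization on both moduli spaces followed by a non-equivariant limit, whereas the paper's reduction is more direct and needs no localization. On the sheaf side, a stability argument shows every one-dimensional stable sheaf on $K_Y$ is already scheme-theoretically supported on the zero section, so $\iota_\ast\colon M_{Y,\beta}\to M_{X,\beta}$ is an isomorphism of compact schemes and the $\DT_4$ virtual class is identified with the $\DT_3$ virtual class outright; on the map side, every stable map lands in the compact divisor $Y$, so the moduli stacks literally coincide and the obstruction theories differ by $-\dR\pi_\ast f^\ast K_Y$, which produces the twisted GW invariants. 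Your localization route would face the additional burden of justifying the non-equivariant limit and a $\DT_4$ virtual localization formula, which is not needed here. On the orientation issue you treat the sign as something to be fixed example by example; the paper instead proposes a uniform candidate, namely the global sign $(-1)^{c_1(Y)\cdot\beta-1}$ built into the twisted $\DT_3$ invariants, supported by the heuristic that the $\DT_3$ obstruction bundle is dual to the rank-$(c_1(Y)\cdot\beta-1)$ twisting bundle on the GW side. Otherwise your outline of which examples to check (lines on Fano hypersurfaces, products with $\mathbb{P}^1$ reducing to local del Pezzo surfaces, equivariant local curves) matches the paper's contents.
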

In \cite{CMT}, the authors did not specify how to choose the orientation in order to match invariants. They 
checked Conjecture \ref{conj:GW/DT_4 intro} in examples, where the corresponding orientation is chosen through case by case study.

\subsection{$\rm{GV/DT_4}$ conjecture on Fano 3-folds}
In this paper, we study Conjecture \ref{conj:GW/DT_4 intro} when $X=K_Y$ is the total space of the canonical bundle of a Fano 3-fold $Y$.
Although $X$ is no longer compact, the moduli scheme $M_{X,\beta}$ of one dimensional stable sheaves is compact. In fact, by
the negativity of $K_Y$, the zero section map $\iota:Y \hookrightarrow X$ induces an isomorphism
\begin{equation}\iota_*: M_{Y,\beta}\cong M_{X,\beta} \nonumber \end{equation}
between moduli schemes of one dimensional stable sheaves $E$'s with $[E]=\beta$ and $\chi(E)=1$ on $Y$ and $X$ respectively.

Then the $\DT_4$ virtual class of $M_{X,\beta}$ reduces to the $\DT_3$ virtual class of $M_{Y,\beta}$ (ref. \cite{CL}). 
As for insertions, let 
\begin{align*}
\tau \colon H^{m}(Y)\to H^{m-2}(M_{Y,\beta}), \quad
\tau(\gamma)=\pi_{M\ast}(\pi_Y^{\ast}\gamma \cup\ch_2(\eE) ),
\end{align*}
where $\pi_Y$, $\pi_M$ are projections from $Y\times M_{Y,\beta}$
to corresponding factors, and $\ch_2(\eE)$ is the
Poincar\'e dual of the
fundamental cycle of the universal sheaf $\eE$ over $M_{Y,\beta}\times Y$.

For $\gamma_i \in H^{m_i}(Y, \mathbb{Z}), \
1\leqslant i\leqslant n$, the \textit{twisted} $DT_3$ \textit{invariant} is defined by
\begin{equation}\label{twist DT_3 intro}\DT_3^{\mathrm{twist}}(Y)(\beta\textrm{ }|\textrm{ }\gamma_1,\ldots,\gamma_n):=(-1)^{c_1(Y)\cdot\beta-1}
\cdot\int_{[M_{Y,\beta}]^{\rm{vir}}}\prod_{i=1}^{n}\tau(\gamma_i)\in\mathbb{Z},  \end{equation}
where $[M_{Y,\beta}]^{\rm{vir}}\in H_2(M_{Y,\beta},\mathbb{Z})$ is the $\DT_3$ virtual class \cite{Thomas}. \\

As for GW invariants, one can identify GW invariants of $X$ with the so-called \textit{twisted} \textit{GW} \textit{invariants} of $Y$. 
For $\gamma_i \in H^{m_i}(Y, \mathbb{Z})$, $1\leqslant i\leqslant n$, and 
the rank $\big(\int_{\beta}c_1(Y)-1\big)$ vector bundle 
\begin{equation}\label{virt norm bdl intro}-\dR\pi_{*}f^{*}K_Y\in K(\overline{M}_{0, n}(Y, \beta)),   \end{equation} 
where $\pi: \mathcal{C}\to \overline{M}_{0, n}(X, \beta)$ is the universal curve and $f:\cC\to Y$ is the universal map,
one defines
\begin{equation}\label{twist GW intro}
\mathrm{GW}^{\mathrm{twist}}_{0, \beta}(Y)(\gamma_1, \ldots, \gamma_n)
:=\int_{[\overline{M}_{0, n}(Y, \beta)]^{\rm{vir}}}
e(-\dR\pi_{*}f^{*}K_Y)\cup\prod_{i=1}^n \mathrm{ev}_i^{\ast}(\gamma_i)\in\mathbb{Q}, \end{equation}
where $\mathrm{ev}_i \colon \overline{M}_{0, n}(Y, \beta)\to Y$ is the $i$-th evaluation map. \\

Then Conjecture \ref{conj:GW/DT_4 intro} can be rephrased completely on $Y$.
\begin{conj}\emph{(Conjecture \ref{conj:GW/DT_3})}\label{conj:GW/DT_3 intro}
Let $Y$ be a smooth Fano 3-fold. Then we have 
\begin{equation}\mathrm{GW}^{\mathrm{twist}}_{0, \beta}(Y)(\gamma_1, \ldots, \gamma_n)=
\sum_{k|\beta}\frac{1}{k^{3-n}}\cdot\DT_3^{\mathrm{twist}}(Y)(\beta/k\textrm{ }|\textrm{ }\gamma_1,\ldots,\gamma_n). \nonumber \end{equation}
\end{conj}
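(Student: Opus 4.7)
The plan is to regard Conjecture~\ref{conj:GW/DT_3 intro} as the translation of Conjecture~\ref{conj:GW/DT_4 intro} to the local CY4 geometry $X = K_Y$, and to match both sides intrinsically on $Y$. For the GW side, ampleness of $-K_Y$ forces every stable map $f\colon C\to K_Y$ with $f_*[C]=\beta\neq 0$ to factor through the zero section $\iota\colon Y\hookrightarrow X$, so $\overline{M}_{0,n}(X,\beta)$ and $\overline{M}_{0,n}(Y,\beta)$ coincide set-theoretically. A standard excess intersection (equivalently, virtual pullback along $\iota$) then identifies the two perfect obstruction theories via the twisting bundle $N_{Y/X}\simeq K_Y$, producing
\begin{equation*}
\mathrm{GW}_{0,\beta}(X)(\iota_*\gamma_1,\ldots,\iota_*\gamma_n)
= \mathrm{GW}^{\mathrm{twist}}_{0,\beta}(Y)(\gamma_1,\ldots,\gamma_n),
\end{equation*}
since the rank of $-\dR\pi_{*}f^{*}K_Y$ is $c_1(Y)\cdot\beta-1$ by Riemann--Roch.

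On the DT side, the stated isomorphism $\iota_*\colon M_{Y,\beta}\simeq M_{X,\beta}$ reduces the moduli to $Y$. The Cao--Leung construction \cite{CL}, combined with \cite{BJ, CGJ}, shows that the isotropic $\DT_4$ obstruction theory on $M_{X,\beta}$ is obtained from the symmetric $\DT_3$ obstruction theory on $M_{Y,\beta}$ by adjoining its Serre dual; after choosing an orientation on each connected component,
\begin{equation*}
[M_{X,\beta}]^{\mathrm{vir}} = \varepsilon\cdot [M_{Y,\beta}]^{\mathrm{vir}}, \qquad \varepsilon\in\{\pm 1\}.
\end{equation*}
Under $\iota_*$ the universal sheaf $\eE_X$ on $M_{X,\beta}\times X$ is the pushforward of the universal sheaf $\eE_Y$ on $M_{Y,\beta}\times Y$, so $\ch_3(\eE_X)$ restricts to $\ch_2(\eE_Y)$ along the zero section and the two $\tau$-insertion operators agree. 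Substituting these identifications into Conjecture~\ref{conj:GW/DT_4 intro} and matching with the definition \eqref{twist DT_3 intro} yields the desired multiple cover formula, provided $\varepsilon$ agrees with $(-1)^{c_1(Y)\cdot\beta-1}$.

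The principal obstacle is exactly this orientation-sign match. The number $c_1(Y)\cdot\beta-1$ is the rank of the virtual normal bundle $\dR\pi_{*}f^{*}K_Y$, which strongly suggests that the sign is the canonical one produced by Cao--Leung's quadratic form; but a canonical component-by-component proof is essentially the analogue, in the local Fano setting, of specifying the orientation in Conjecture~\ref{conj:GW/DT_4 intro}, which itself remains open. The realistic strategy, which the paper pursues, is therefore to verify Conjecture~\ref{conj:GW/DT_3 intro} by direct computation on representative compact and non-compact Fano 3-folds: compute both sides for small $\beta$ using explicit geometric descriptions of $\overline{M}_{0,n}(Y,\beta)$ and $M_{Y,\beta}$, check numerical agreement with the stated sign, and thereby pin down the orientation convention one component at a time.
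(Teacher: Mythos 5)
Your proposal is essentially the paper's own treatment: the statement is a conjecture, and the paper likewise reduces Conjecture \ref{conj:GW/DT_4 intro} to $Y$ via the zero section (identifying the moduli spaces, the virtual classes up to a sign, and the insertions) and then verifies the identity in compact and non-compact examples rather than proving it. The one point you leave as a suggestion that the paper makes explicit is the origin of the sign: in Section \ref{geo explain} it exhibits, for ``good'' Fano 3-folds, a duality $H^1(\mathbb{P}^1,f^*K_Y)\cong H^0(C_t,\wedge^2 N_{C_t/Y})^*\cong \Ext^2_Y(E,E)^*$ between the GW twisting bundle and the $\DT_3$ obstruction bundle, so their Euler classes differ by exactly $(-1)^{c_1(Y)\cdot\beta-1}$, which is precisely what motivates the sign in the definition of the twisted $\DT_3$ invariant.
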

Remarkably, when restricting to local CY 4-fold $K_Y$, 
we conjecture the ambiguity in choosing the right orientation in Conjecture \ref{conj:GW/DT_4 intro}
can be fixed by introducing twisted $\DT_3$ invariants.  

In Section \ref{geo explain}, we give a heuristic explanation of 
why the sign $(-1)^{c_1(Y)\cdot\beta-1}$ in twisted $\DT_3$ invariants should give the correct orientation for 
matching with twisted GW invariants. We also compute examples and check Conjecture \ref{conj:GW/DT_3 intro} in those cases.

\subsection{Verifications of Conjecture \ref{conj:GW/DT_3 intro}: compact examples}
For compact Fano 3-folds, we check Conjecture \ref{conj:GW/DT_3 intro} in the following examples. \\

For the line class on Fano hypersurfaces in $\mathbb{P}^4$, we have 
\begin{prop}\label{lines on Fano intro}(Proposition \ref{lines on Fano})
Let $Y_d\subseteq\mathbb{P}^4$ be a smooth hypersurface of degree $d\leqslant 4$. 
Then Conjecture \ref{conj:GW/DT_3 intro} is true for the line class $\beta=[l]\in H_2(Y_d,\mathbb{Z})$.
\end{prop}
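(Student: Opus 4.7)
The plan is to reduce both sides to intersection numbers on the Fano scheme of lines $F_d := F(Y_d)$ and then identify the two integrands via relative Serre duality on the universal $\mathbb{P}^1$-bundle. Since $[l] \in H_2(Y_d,\mathbb{Z}) \cong \mathbb{Z}$ is primitive by the Lefschetz hyperplane theorem, the divisibility sum on the right of Conjecture~\ref{conj:GW/DT_3 intro} collapses to $k=1$, and it suffices to prove
\[
\mathrm{GW}^{\mathrm{twist}}_{0,[l]}(Y_d)(\gamma_1,\ldots,\gamma_n) = \DT_3^{\mathrm{twist}}(Y_d)([l] \mid \gamma_1,\ldots,\gamma_n).
\]
The assignment $\mathcal{O}_l \mapsto l$ gives $M_{Y_d,[l]} \cong F_d$. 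Since every genus-zero stable map of class $[l]$ has a unique non-contracted component, mapping isomorphically onto a line in $Y_d$, forgetting the markings yields a morphism $q \colon \overline{M}_{0,n}(Y_d,[l]) \to F_d$ whose universal curve is pulled back from the universal line $p \colon \mathcal{L} \to F_d$.

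Next I would express both virtual classes in terms of natural rank-$(4-d)$ vector bundles on $F_d$. On the DT side, for $d \leqslant 4$ one has $H^0(K_{Y_d}|_l) = H^0(\mathcal{O}_l(d-5)) = 0$, so $\mathrm{Ext}^3(\mathcal{O}_l,\mathcal{O}_l) = 0$ by Serre duality; a routine deformation-theoretic calculation then identifies the obstruction sheaf of the Thomas theory with $\det \mathcal{N}$, where $\mathcal{N}$ is the universal normal bundle along $\mathcal{L} \hookrightarrow Y_d \times F_d$. This yields
\[
[M_{Y_d,[l]}]^{\rm vir} = e(\mathrm{Obs}) \cap [F_d], \qquad \mathrm{Obs} := p_*(\det \mathcal{N}).
\]
On the GW side, because the universal curve pulls back via $q$ from $\mathcal{L}$, the K-theory class $-\dR\pi_* f^* K_Y$ is the pullback of
\[
W_0 := R^1 p_*(K_Y|_{\mathcal{L}}),
\]
a rank-$(4-d)$ bundle on $F_d$. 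Combining this with the standard pushforward identity $q_*\bigl(\prod_i \mathrm{ev}_i^*\gamma_i\bigr) = \prod_i \tau(\gamma_i)$ (verified in the case $n=1$ by $q_*\mathrm{ev}^*\gamma = p_*(\pi_Y^*\gamma \cup [\mathcal{L}]) = \tau(\gamma)$, with the general case following by iterated projection formula), the twisted GW integral becomes $\int_{F_d} e(W_0) \cup \prod_i \tau(\gamma_i)$.

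The heart of the argument is the identification $W_0 \cong \mathrm{Obs}^{\vee}$ on $F_d$, which follows from adjunction and relative Serre duality for $p$. Explicitly, adjunction for $\mathcal{L} \hookrightarrow Y_d \times F_d$ gives $\omega_{\mathcal{L}/F_d} = K_Y|_{\mathcal{L}} \otimes \det \mathcal{N}$, whence
\[
R^1 p_*(K_Y|_{\mathcal{L}}) \cong \bigl(p_*(\omega_{\mathcal{L}/F_d} \otimes K_Y^{-1}|_{\mathcal{L}})\bigr)^{\vee} = (p_* \det \mathcal{N})^{\vee} = \mathrm{Obs}^{\vee}.
\]
Consequently $e(W_0) = (-1)^{4-d} e(\mathrm{Obs})$, and since $(-1)^{c_1(Y_d) \cdot [l] - 1} = (-1)^{(5-d)-1} = (-1)^{4-d}$, this factor matches the explicit sign in the definition of $\DT_3^{\mathrm{twist}}$, giving the desired equality.

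The main obstacle I anticipate is that realising $\mathrm{Obs}$ and $W_0$ as honest vector bundles requires $F_d$ and $\overline{M}_{0,n}(Y_d,[l])$ to be smooth of expected dimension, which is clean for $d \leqslant 3$ by classical results but can fail when $d = 4$: on special smooth quartics, lines of unbalanced splitting $N_{l/Y_4} = \mathcal{O}(1) \oplus \mathcal{O}(-2)$ force $F(Y_4)$ to be non-reduced and produce excess components in the stable map moduli. In those cases the identification $W_0 \cong \mathrm{Obs}^{\vee}$ must be promoted to an isomorphism of the two-term complexes underlying the respective perfect obstruction theories; this promotion is still formal from adjunction and relative duality, but requires more delicate bookkeeping than the smooth setting.
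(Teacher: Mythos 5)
Your proposal follows essentially the same route as the paper: identify $M_{Y_d,[l]}$ with the Fano scheme of lines, realise the $\mathrm{DT}_3$ obstruction bundle as $p_*(\wedge^2\mathcal{N})$ and the GW twisting class as $R^1p_*(K_{Y_d}|_{\mathcal{L}})$, and show these are dual rank-$(4-d)$ bundles via adjunction and relative (Grothendieck--Verdier) duality, so that the resulting sign $(-1)^{4-d}$ is absorbed by the twist $(-1)^{c_1(Y_d)\cdot[l]-1}$. The duality computation and the sign bookkeeping are correct and match the paper's.

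Two points are thinner than they should be. First, the obstacle you flag at the end --- special smooth quartics with lines of type $\mathcal{O}(1)\oplus\mathcal{O}(-2)$ and a non-reduced Fano scheme --- is dissolved by a one-line observation you omit: both sides are deformation invariant, so one may assume $Y_d$ is a \emph{general} hypersurface, for which Koll\'ar's results give that the Hilbert scheme of lines is smooth and connected of dimension $5-d$ and the normal bundles are balanced enough that $H^1(l,N_{l/Y_d})=0$. Without this reduction your identity $[M_{Y_d,[l]}]^{\mathrm{vir}}=e(\mathrm{Obs})\cap[F_d]$ with $\mathrm{Obs}$ an honest vector bundle does fail, and the ``promotion to an isomorphism of two-term complexes'' you gesture at is unnecessary work. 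Second, the claim $q_*\bigl(\prod_i\mathrm{ev}_i^*\gamma_i\bigr)=\prod_i\tau(\gamma_i)$ for $n=2$ (needed when $\gamma_1,\gamma_2\in H^3(Y_d)$, which cannot be excluded since $H^3(Y_d)\neq 0$ for $d\geqslant 2$) is not an ``iterated projection formula'': the pushforward of a product is not the product of pushforwards. What makes it true here is the identification of $\overline{\mathcal{M}}_{0,2}(Y_d,[l])$, embedded in $F_d\times Y_d\times Y_d$, as the transverse intersection of the two pullbacks of the universal line $\mathcal{L}$, so that its Poincar\'e dual factors as the product of the two copies of $\mathrm{PD}[\mathcal{L}]$; this is exactly the diagram the paper writes out for $d=4$, and it needs to be said rather than waved at.
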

For multiple fiber classes for $\mathbb{P}^1$-bundles, we have 
\begin{prop}(Proposition \ref{fiber classes})\label{fiber classes intro}
Let $S$ be a del-Pezzo surface, $Y=S\times\mathbb{P}^1$ be the product. Then Conjecture \ref{conj:GW/DT_3 intro} is true
for $\beta=n\cdot[\mathbb{P}^1]$ with $n\geqslant1$.
\end{prop}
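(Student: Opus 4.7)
The plan is to exploit the product structure $Y=S\times\mathbb{P}^1$, under which both the twisted GW and twisted $\DT_3$ moduli problems decouple into an $S$-factor and a universal local-$\mathbb{P}^1$ factor; the conjecture then reduces to the classical multiple-cover formula for local $\mathbb{P}^1$, weighted by intersection numbers on $S$.

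For the GW side, any class $n[\mathbb{P}^1]$ projects to zero in $H_2(S)$, so every genus-zero stable map factors through a single fibre of $p_1\colon Y\to S$ and I identify $\overline{M}_{0,N}(Y,n[\mathbb{P}^1])\cong S\times\overline{M}_{0,N}(\mathbb{P}^1,n)$, where $N$ is the number of marked points. Using $K_Y=p_1^*K_S\otimes p_2^*\mathcal{O}_{\mathbb{P}^1}(-2)$ together with K\"unneth for the relative pushforward yields
$$-\mathbf{R}\pi_* F^*K_Y \;=\; p_S^*K_S\otimes p_{\overline{M}'}^*W, \qquad W:=R^1\pi'_*(F'_2)^*\mathcal{O}_{\mathbb{P}^1}(-2),$$
with $W$ the standard rank $2n-1$ obstruction bundle of local $\mathbb{P}^1$. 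The splitting principle expands $e(K_S\otimes W)=\sum_{j=0}^{2n-1} c_1(K_S)^{2n-1-j}c_j(W)$, and decomposing each insertion $\gamma_i=\alpha_i\boxtimes\beta_i$ K\"unneth-style writes $\mathrm{GW}^{\mathrm{twist}}_{0,n[\mathbb{P}^1]}(Y)(\gamma_1,\ldots,\gamma_N)$ as a finite sum of products of intersection numbers $\int_S c_1(K_S)^{2n-1-j}\prod_i\alpha_i$ against local-$\mathbb{P}^1$ GW integrals $\int_{[\overline{M}_{0,N}(\mathbb{P}^1,n)]^{\mathrm{vir}}} c_j(W)\prod_i(\mathrm{ev}'_i)^*\beta_i$.

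For the DT side I first describe $M_{Y,n[\mathbb{P}^1]}$. The case $n=1$ sets the pattern: the zero-section identification of \cite{CL} gives $M_{Y,[\mathbb{P}^1]}\cong S$ with universal sheaf $(\iota_\Delta)_*\mathcal{O}_{S\times\mathbb{P}^1}$, so $\mathrm{ch}_2(\eE)=[\Delta_S\times\mathbb{P}^1]$; Poincar\'e duality on $S$ then collapses $\tau(\alpha\boxtimes\beta)=(\int_{\mathbb{P}^1}\beta)\cdot\alpha$, and the $\DT_4\to\DT_3$ reduced virtual class equals $c_1(K_S^{-1})\cap[S]$, which combined with the sign $(-1)^{c_1(Y)\cdot\beta-1}=-1$ immediately matches the $n=1$ side of the conjecture. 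For general $n$ I plan to use Simpson stability to show that stable sheaves are supported on connected thickenings of a single fibre, identify $M_{Y,n[\mathbb{P}^1]}$ explicitly (as a moduli over $\mathrm{Hilb}^n(S)$ of line bundles on thickened $\mathbb{P}^1$s), and verify that the reduced virtual class of \cite{CL} again factors as a $K_S$-twist pulled back from $S$ tensored with a universal local-$\mathbb{P}^1$ ingredient, with the $\tau$-insertions splitting via K\"unneth exactly as above.

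Once both sides are presented as (intersection number on $S$)~$\times$~(universal local-$\mathbb{P}^1$ invariant), the content of Conjecture~\ref{conj:GW/DT_3 intro} becomes precisely the multiple-cover formula for local $\mathbb{P}^1$: the divisor sum $\sum_{k\mid n}k^{N-3}$ is exactly the Aspinwall--Morrison-style pattern satisfied by the universal local-$\mathbb{P}^1$ integrals, and the sign on the DT side matches the Borisov--Joyce orientation inherited from $K_Y$. The main obstacle is the structural step on the DT side for $n\geqslant 2$: producing an explicit parametrisation of $M_{Y,n[\mathbb{P}^1]}$ and verifying that the reduction obstruction from $\DT_4(K_Y)$ descends to exactly the $K_S$-twist dictated by the GW computation. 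Once this is in place, the matching follows mechanically from the K\"unneth decomposition combined with the local-$\mathbb{P}^1$ multiple-cover formula.
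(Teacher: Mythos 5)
Your GW-side outline matches the paper's: the product decomposition $\overline{M}_{0,N}(Y,n[\mathbb{P}^1])\cong S\times\overline{M}_{0,N}(\mathbb{P}^1,n)$, the splitting $K_Y=K_S\boxtimes K_{\mathbb{P}^1}$, and the expansion of $e(-\dR\pi_*f^*K_Y)$ into $c_1(K_S)$ times the local-$\mathbb{P}^1$ classes are all correct (by degree reasons on $S$ only the term $c_1(K_S)\cup c_{2n-2}(-\dR\pi_*f^*K_{\mathbb{P}^1})$ survives against a $H^2(S)\otimes H^2(\mathbb{P}^1)$ insertion, and the latter is the reduced class of $K_{\mathbb{P}^1}$ contributing $1/n^3$). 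Your $n=1$ DT computation ($M_{Y,[\mathbb{P}^1]}\cong S$, obstruction bundle $\wedge^2TS=K_S^{-1}$, $\tau(\alpha\boxtimes\beta)=(\int_{\mathbb{P}^1}\beta)\alpha$) also agrees with the paper.

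The genuine gap is the step you yourself flag as the ``main obstacle'': the structure of $M_{Y,n[\mathbb{P}^1]}$ for $n\geqslant 2$. Your plan to parametrise it as a moduli of line bundles on thickened fibres over $\mathrm{Hilb}^n(S)$ is headed in the wrong direction: no such sheaf is stable. The correct statement (this is \cite[Lemma 2.2]{CMT}, invoked in the paper's proof) is that any one-dimensional stable sheaf $F$ with $[F]=n[\mathbb{P}^1]$ is scheme-theoretically supported on a single \emph{reduced} fibre $\mathbb{P}^1\times\{p\}$: multiplication by a local coordinate $x$ of $S$ at $p$ is a nilpotent endomorphism of $F$, so $xF$ is simultaneously a proper subsheaf and a quotient of $F$, which stability forbids unless $xF=0$. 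Hence $F$ is a rank-$n$ stable bundle on $\mathbb{P}^1$, which exists only for $n=1$, and therefore $M_{Y,n[\mathbb{P}^1]}=\emptyset$ for all $n\geqslant 2$. This emptiness is not a technicality you can route around: it is exactly what collapses the divisor sum $\sum_{k|n}k^{-2}\DT_3^{\mathrm{twist}}(Y)((n/k)[\mathbb{P}^1]\mid\gamma)$ to the single term $k=n$, so that the conjecture reduces to $\mathrm{GW}^{\mathrm{twist}}_{0,n[\mathbb{P}^1]}=\tfrac{1}{n^2}\DT_3^{\mathrm{twist}}([\mathbb{P}^1]\mid\gamma)$, which is then verified by the $1/n^3$ multiple-cover computation on the GW side. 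Without establishing $M_{Y,n[\mathbb{P}^1]}=\emptyset$, your proposed endgame (``the divisor sum is exactly the Aspinwall--Morrison pattern'') does not close.
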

When $Y=S\times\mathbb{P}^1$ is the product of a del-Pezzo surface $S$ with $\mathbb{P}^1$ and $\beta\in H_2(S,\mathbb{Z})\subseteq H_2(Y,\mathbb{Z})$, we can identify twisted $\DT_3$ (resp. GW) invariants with $\DT_3$ (resp. GW) invariants of a non-compact Calabi-Yau 3-fold $K_S$ up to multiplying some constant.
\begin{prop}(Proposition \ref{compute invs for fano surface})\label{compute invs for fano surface intro}
%Let $S$ be a del-Pezzo surface and $Y=\mathbb{P}^1\times S$ be the product. 
%For $\beta\in H_2(S,\mathbb{Z})\subseteq H_2(Y,\mathbb{Z})$, 
In the above setting, we have: \\
(1) If $\gamma=(\gamma_1,d)\in H^2(S)\otimes H^2(\mathbb{P}^1)\subseteq H^4(Y)$, then 
\begin{equation}\DT_3^{\mathrm{twist}}(Y)(\beta\textrm{ }|\textrm{ }\gamma)
=d\,(\beta\cdot\gamma_1)\cdot\DT_3(K_S)(\beta), \nonumber \end{equation}
\begin{equation}\mathrm{GW}^{\mathrm{twist}}_{0, \beta}(Y)(\gamma)=d\,(\beta\cdot\gamma_1)\cdot \mathrm{GW}_{0,\beta}(K_S). 
\nonumber \end{equation}
(2) If $\gamma\in H^4(S)\subseteq H^4(Y)$, then
\begin{equation}\DT_3^{\mathrm{twist}}(Y)(\beta\textrm{ }|\textrm{ }\gamma)=\mathrm{GW}^{\mathrm{twist}}_{0, \beta}(Y)(\gamma)=0. \nonumber \end{equation}
\end{prop}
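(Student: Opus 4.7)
The plan is to use that any one-dimensional stable sheaf on $Y = S \times \mathbb{P}^1$ with horizontal class $\beta \in H_2(S)$ is set-theoretically supported on a single fiber $S \times \{p\}$, and that any connected genus-zero stable map in class $\beta$ factors through such a fiber. This yields the product decompositions $M_{Y,\beta} \cong M_{S,\beta} \times \mathbb{P}^1$ and $\overline{M}_{0,n}(Y, \beta) \cong \overline{M}_{0,n}(S, \beta) \times \mathbb{P}^1$, and reduces all invariants on $Y$ to twisted invariants on $S$, and then to invariants on $K_S$.

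On the DT side, the Koszul resolution of $i\colon S \times \{p\} \hookrightarrow Y$ (with trivial normal bundle) gives $Li^* i_* E = E \oplus E[1]$, hence $\Ext^2_Y(E,E) \cong \Ext^1_S(E,E)$. Since $-K_S$ is ample, $\Ext^2_S(E, E) = 0$ by Serre duality and stability, so $M_{S,\beta}$ is smooth and the obstruction bundle on $M_{Y,\beta}$ is $\pi^* T M_{S,\beta}$. Writing the universal sheaf $\eE$ on $M_{Y,\beta}\times Y$ as the pushforward along the diagonal $\mathbb{P}^1 \hookrightarrow \mathbb{P}^1 \times \mathbb{P}^1$ of the pullback of $\eE_S$ (the universal sheaf on $M_{S,\beta}\times S$), GRR together with the Künneth decomposition on $M_{S,\beta} \times S$ shows that for $\gamma = \gamma_1 \boxtimes d[pt_{\mathbb{P}^1}]$, only the $1\boxtimes \beta$ component of $\ch_1(\eE_S) \in H^2(M_{S,\beta}) \oplus H^2(S)$ survives integration over $S$, giving $\tau(\gamma) = d(\beta \cdot \gamma_1) \cdot [pt_{\mathbb{P}^1}]$ in $H^2(M_{Y,\beta})$. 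Pairing against the virtual class,
\begin{equation*}
\DT_3^{\mathrm{twist}}(Y)(\beta \mid \gamma) = (-1)^{c_1(Y)\cdot\beta - 1}\, d(\beta \cdot \gamma_1)\, \int_{M_{S,\beta}} e(T M_{S,\beta}).
\end{equation*}
The parallel computation on $K_S$, using $Li^* i_* E = E \oplus E \otimes K_S^{-1}[1]$ combined with Serre duality on $S$, identifies the $\DT_3$-obstruction bundle on $M_{K_S,\beta} = M_{S,\beta}$ with $T^* M_{S,\beta}$, so $\DT_3(K_S)(\beta) = (-1)^{\beta^2+1} \int_{M_{S,\beta}} e(T M_{S,\beta})$. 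The two sides agree because $c_1(Y)\cdot \beta = c_1(S)\cdot \beta \equiv \beta^2 \pmod{2}$, which is the adjunction parity $2 g_a - 2 = \beta\cdot(K_S + \beta)$ on the smooth surface $S$.

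On the GW side, adjunction gives $K_Y|_{S \times \{p\}} = K_S$, so $-\dR\pi_* f^* K_Y$ is pulled back from $\overline{M}_{0,n}(S,\beta)$, and the $\mathrm{ev}$-maps factor through $S \times \mathbb{P}^1$. Separating the $\mathbb{P}^1$ factor (which contributes $d$ via $\int_{\mathbb{P}^1}[pt_{\mathbb{P}^1}] = 1$) reduces the $Y$-invariant to $d \cdot \mathrm{GW}_{0,1,\beta}(K_S)(\gamma_1)$ by the standard local-CY reduction $\mathrm{GW}(K_S) = \mathrm{GW}^{\mathrm{twist}}(S)$. The divisor equation for $\mathrm{GW}(K_S)$ (using $H^2(K_S) = H^2(S)$) then removes the insertion $\gamma_1$ at a cost of $\beta \cdot \gamma_1$, yielding $d(\beta \cdot \gamma_1)\, \mathrm{GW}_{0,\beta}(K_S)$. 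Part (2) follows in both cases from the same product decomposition: an insertion $\gamma \in H^4(S) \subseteq H^4(Y)$ is pulled back from $S$ and carries no $\mathbb{P}^1$-component, so integration against a virtual class whose $\mathbb{P}^1$-factor is the fundamental class produces zero (one needs a top-degree class in the $\mathbb{P}^1$-direction). The main obstacle is the sign matching on the DT side, which is exactly where the twist $(-1)^{c_1(Y) \cdot \beta - 1}$ in the definition of $\DT_3^{\mathrm{twist}}$ plays its role.
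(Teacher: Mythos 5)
Your proposal is correct and follows essentially the same route as the paper: the product decompositions $M_{Y,\beta}\cong M_{S,\beta}\times\mathbb{P}^1$ and $\overline{M}_{0,n}(Y,\beta)\cong\overline{M}_{0,n}(S,\beta)\times\mathbb{P}^1$, the identification of the obstruction bundle with (the pullback of) $TM_{S,\beta}$ so that the virtual class is $\chi(M_{S,\beta})\cdot[\mathbb{P}^1]$, the computation of $\tau(\gamma)$ from the universal sheaf $\eE_{M_S}\boxtimes\oO_{\Delta_{\mathbb{P}^1}}$, and the reduction of the GW side to $K_S$ via the divisor equation. The one place where you genuinely diverge is the parity identity $(-1)^{1+\beta^2}=(-1)^{c_1(S)\cdot\beta-1}$: the paper proves this by writing $S$ as a blow-up of $\mathbb{P}^2$ at general points and computing $c_1(S)\cdot\beta$ and $\beta^2$ explicitly in the standard basis (treating $\mathbb{P}^1\times\mathbb{P}^1$ separately), whereas you invoke the adjunction/Riemann--Roch parity $\beta\cdot(K_S+\beta)\in 2\mathbb{Z}$. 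Your argument is cleaner and works verbatim for any divisor class on any smooth projective surface, so it is a genuine (if small) improvement on that step. Two minor cautions: you should say \emph{scheme-theoretically} (not set-theoretically) supported on a fiber --- this is what the stability argument of \cite[Lemma 2.2]{CMT} actually provides and what your Koszul-resolution computation of $\Ext^2_Y$ requires; and note that the normal bundle of $S\times\{p\}$ in $Y$ is trivial rather than $K_S^{-1}$, so the deformation space picks up the extra $\mathbb{C}$-factor $\Ext^1_Y(E,E)\cong\Ext^1_S(F,F)\oplus\mathbb{C}$ accounting for moving the fiber, consistent with your $\mathbb{P}^1$-factor in the moduli space.
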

Then Conjecture \ref{conj:GW/DT_3 intro} is equivalent to Katz's conjecture \cite{Katz} (see Conjecture \ref{katz conj}). In particular, by \cite[Corollary A.7.]{CMT}, we obtain
\begin{thm}(Theorem \ref{del-Pez})\label{del-Pez intro}
Let $S$ be a toric del-Pezzo surface and $Y=S\times\mathbb{P}^1$ be the product. Then Conjecture \ref{conj:GW/DT_3 intro} is true
for $\beta\in H_2(S,\mathbb{Z})\subseteq H_2(Y,\mathbb{Z})$.
\end{thm}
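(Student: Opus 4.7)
The plan is to combine the K\"unneth decomposition $H^\ast(Y)=H^\ast(S)\otimes H^\ast(\mathbb{P}^1)$, the divisor axiom, and Proposition~\ref{compute invs for fano surface intro} in order to reduce the claim to Katz's multiple-cover conjecture for the local Calabi--Yau 3-fold $K_S$, which is known in the toric case by \cite[Corollary A.7]{CMT}.

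First I would decompose each insertion $\gamma_i$ via K\"unneth and peel off all divisor insertions $\gamma_i\in H^2(Y)$ using the divisor axiom on both sides. On the twisted Gromov--Witten side this is legitimate because the twisting class $e(-\dR\pi_\ast f^\ast K_Y)$ is pulled back along the forgetful map that drops a marked point; on the twisted $\DT_3$ side it is immediate from the identity $\tau(D)|_{[E]}=D\cdot\beta$, which is a constant function on $M_{Y,\beta}$. Each such step contributes a matching factor of $(\gamma_i\cdot\beta)$ on both sides and lowers $n$ by one. After exhausting this process, a virtual dimension count (the twisted virtual dimension forces the total complex insertion degree to be $n+1$) shows that either both sides vanish, or the only surviving configuration is a single insertion $\gamma\in H^4(Y)$.

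In the remaining case the K\"unneth splitting $H^4(Y)=H^4(S)\oplus\bigl(H^2(S)\otimes H^2(\mathbb{P}^1)\bigr)$ together with Proposition~\ref{compute invs for fano surface intro} take over. For $\gamma\in H^4(S)$ both sides vanish and the identity is trivial. For $\gamma=(\gamma_1,d)\in H^2(S)\otimes H^2(\mathbb{P}^1)$ the proposition identifies both twisted invariants with $d(\beta\cdot\gamma_1)$ times the corresponding untwisted invariant on $K_S$. Using $(\beta/k)\cdot\gamma_1=(\beta\cdot\gamma_1)/k$ to extract this factor from the sum over $k$ cancels it on both sides of Conjecture~\ref{conj:GW/DT_3 intro} and leaves the identity
\begin{equation*}
\mathrm{GW}_{0,\beta}(K_S) = \sum_{k\mid\beta}\frac{1}{k^{3}}\,\DT_3(K_S)(\beta/k),
\end{equation*}
which is precisely Katz's conjecture on the local Calabi--Yau threefold $K_S$. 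For toric del-Pezzo $S$ this is \cite[Corollary A.7]{CMT}.

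The only delicate point is the divisor axiom for the twisted invariants. On the Gromov--Witten side it is standard, while on the $\DT_3$ side it follows from the one-dimensionality of the support of the universal sheaf; so this is not a genuine obstacle, and the main substance of the argument is already absorbed by the cited Corollary A.7 of \cite{CMT}.
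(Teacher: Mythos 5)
Your proposal is correct and follows essentially the same route as the paper: the paper's Remark~\ref{constrain on insert} performs your divisor-axiom reduction to a single $H^4$ insertion, its Corollary~\ref{cor on conj} is exactly your observation that Proposition~\ref{compute invs for fano surface intro} plus the factor $(\beta/k)\cdot\gamma_1=(\beta\cdot\gamma_1)/k$ converts the statement into Katz's conjecture for $K_S$, and the toric case is then settled by \cite[Corollary A.7]{CMT}. The only detail left implicit in your write-up is that $H^3(Y)=0$ for $Y=S\times\mathbb{P}^1$ with $S$ del-Pezzo, which rules out the configuration of two $H^3$ insertions; the paper notes this in Remark~\ref{constrain on insert}.
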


\subsection{Verifications of Conjecture \ref{conj:GW/DT_3 intro}: non-compact examples}
Let $Y=\mathrm{Tot}_S(L) \to S$ be the total space of a negative line bundle $L$ over a del-Pezzo surface $S$, and $i:S\to Y$ be the zero section. 
Although $Y$ is non-compact,
$M_{Y,\beta}$ is compact and has a well-defined virtual class as the push-forward morphism 
\begin{equation}i_*:M_{S,\beta}\to M_{Y,\beta}  \nonumber \end{equation}
is an isomorphism between moduli schemes of one dimensional stable sheaves on $S$ and $Y$ respectively.
The virtual class of $M_{Y,\beta}$ is the Euler class of a vector bundle over $M_{S,\beta}$ and we can conclude the following symmetric property
of twisted $\DT_3$ invariants. 
\begin{prop}(Proposition \ref{symm under change of line bdl}, Corollary \ref{local surface symm})
Let $S$ be a del-Pezzo surface and $Y_1=\mathrm{Tot}_S(L)$, $Y_2=\mathrm{Tot}_S(L^{-1}\otimes K_S)$ be the total space of
ample line bundles $L^{-1}$, $L\otimes K^{-1}_S$ respectively on $S$. Then we have 
\begin{equation}
\DT_3^{\mathrm{twist}}(Y_1)(\beta\textrm{ }|\textrm{ }[\mathrm{pt}])=
\DT_3^{\mathrm{twist}}(Y_2)(\beta\textrm{ }|\textrm{ }[\mathrm{pt}]).
\nonumber \end{equation}
In particular, Conjecture \ref{conj:GW/DT_3} is true for $Y_1$ if and only if it is true for $Y_2$.
\end{prop}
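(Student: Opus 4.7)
The plan is to push both sides down to the common moduli space $M_{S,\beta}$ via the zero-section isomorphism $i_*\colon M_{S,\beta}\simto M_{Y_i,\beta}$, compute the resulting integrands, and show that they match using relative Serre duality on $S$ together with the adjunction formula. Since $L^{-1}$ and $L\otimes K_S^{-1}$ are both ample, $i_*$ is an isomorphism for $i=1,2$, so the two $\DT_3$ virtual classes live on the same scheme; the first task is to express $[M_{Y_i,\beta}]^{\rm{vir}}$ as a concrete class on $M_{S,\beta}$.

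For this, I would use the Koszul resolution of the zero section to obtain
\begin{equation*}
\dR\HOM_{Y_i}(i_*\eE,i_*\eE)\simeq \dR\HOM_S(\eE,\eE)\oplus \dR\HOM_S(\eE,\eE\otimes L_i)[-1],
\end{equation*}
so that pushing forward along $\pi_M$ identifies the obstruction theory on $M_{Y_i,\beta}$ with the surface obstruction theory on $M_{S,\beta}$ plus an ``extra'' piece built from $F_i\cneq \dR\pi_{M*}\dR\HOM(\eE,\eE\otimes L_i)$, yielding a presentation $[M_{Y_i,\beta}]^{\rm{vir}}=e(F_i)\cap[M_{S,\beta}]$ after taking trace-free parts. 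Relative Serre duality on $\pi_M\colon M_{S,\beta}\times S\to M_{S,\beta}$ together with the identity $L_1\otimes L_2=K_S$ then yields $F_1\simeq F_2^{\vee}[-2]$, so that $e(F_1)=(-1)^{\mathrm{rk}(F_2)}e(F_2)$.

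For the sign count, Hirzebruch--Riemann--Roch applied to a one-dimensional sheaf $E$ with $[E]=\beta$ and $\chi(E)=1$ gives $\chi_S(E,E\otimes L)=-\beta^2$, independent of $L$, so $\mathrm{rk}(F_i)\equiv\beta^2\pmod 2$. Using $K_{Y_i}=\pi^*(K_S\otimes L_i^{-1})$ one computes $c_1(Y_1)\cdot\beta=(L-K_S)\cdot\beta$ and $c_1(Y_2)\cdot\beta=-L\cdot\beta$, so the ratio of the two sign prefactors $(-1)^{c_1(Y_i)\cdot\beta-1}$ in (\ref{twist DT_3 intro}) is $(-1)^{K_S\cdot\beta}$. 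Combined with the Euler-class sign $(-1)^{\beta^2}$ this gives an overall discrepancy $(-1)^{\beta^2+K_S\cdot\beta}=+1$ by the adjunction formula $\beta^2+K_S\cdot\beta=2g-2$. Since $[\mathrm{pt}]\in H^6(Y_i)$ is represented by $i_*[\mathrm{pt}]_S$, the insertion $\tau([\mathrm{pt}])$ pulls back to the same class on $M_{S,\beta}$ for both $i$, which closes the argument for the equality of twisted $\DT_3$ invariants.

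For the corollary that Conjecture \ref{conj:GW/DT_3 intro} holds for $Y_1$ if and only if it holds for $Y_2$, I would run the parallel Serre-duality/adjunction argument on the GW side: for $\beta\in H_2(S,\mathbb{Z})$ the stable maps into $Y_i$ factor through the zero section, the local GW virtual class reduces to an Euler class on $\overline{M}_{0,n}(S,\beta)$ built from $\dR\pi_*f^*L_i$, and the twist $e(-\dR\pi_*f^*K_{Y_i})$ is controlled by $K_S\otimes L_i^{-1}$; the same relative Serre duality on the universal curve, together with a rank-parity computation, then forces $\mathrm{GW}^{\mathrm{twist}}_{0,\beta}(Y_1)=\mathrm{GW}^{\mathrm{twist}}_{0,\beta}(Y_2)$, after which the equivalence of the two instances of the conjecture is immediate. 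The main obstacle will be the careful tracking of derived-category shifts and ranks in the identification $F_1\cong F_2^{\vee}[-2]$, in particular verifying that the trace-free normalization matches the Euler-class presentation $[M_{Y_i,\beta}]^{\rm{vir}}=e(F_i)\cap[M_{S,\beta}]$, and the analogous bookkeeping on the universal curve for the GW side.
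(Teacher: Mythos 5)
Your proposal is correct and follows essentially the same route as the paper: reduce both virtual classes to Euler classes of $\eE xt^1_{\pi_{M_S}}(\mathbb{F},\mathbb{F}\boxtimes L_i)$ on $M_{S,\beta}$ (the paper cites \cite[Proposition 3.1]{CMT} for this rather than re-deriving it via the Koszul resolution), relate the two bundles by Grothendieck--Verdier duality to pick up a sign $(-1)^{\beta^2}$, and cancel it against the ratio $(-1)^{K_S\cdot\beta}$ of the prefactors using $\beta^2\equiv K_S\cdot\beta \pmod 2$. The only cosmetic difference is that you invoke adjunction/the Wu formula for this parity where the paper uses its explicit lattice computation (\ref{parity del-pezzo}), and your sketch of the GW-side comparison matches the paper's (unelaborated) claim that the twisted GW invariants of $Y_1$ and $Y_2$ agree.
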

Combining with previous computations \cite[Sect. 3.2]{CMT}, we obtain
\begin{prop}(Proposition \ref{local p2})
Let $Y=\oO_{\mathbb{P}^2}(-1)$ or $\oO_{\mathbb{P}^2}(-2)$ and $\beta=d[l]\in H_2(\mathbb{P}^2,\mathbb{Z})$ be the degree $d$ class. Then Conjecture \ref{conj:GW/DT_3} is true when $d\leqslant 3$.
\end{prop}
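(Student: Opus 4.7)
The plan is to reduce the two claimed cases to a single one via Corollary \ref{local surface symm}, and then invoke the explicit invariant computations already performed in \cite[Sect.~3.2]{CMT}.

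First, take $S = \mathbb{P}^2$ and $L = \oO_{\mathbb{P}^2}(-1)$; since $L^{-1} = \oO(1)$ and $L \otimes K_S^{-1} = \oO(2)$ are both ample and $L^{-1} \otimes K_S = \oO(1) \otimes \oO(-3) = \oO(-2)$, the hypothesis of the symmetry applies with the pair $(Y_1, Y_2) = (\oO_{\mathbb{P}^2}(-1), \oO_{\mathbb{P}^2}(-2))$. By the ``if and only if'' part of Corollary \ref{local surface symm}, Conjecture \ref{conj:GW/DT_3 intro} with the relevant point insertion on one of these two local 3-folds is equivalent to the same conjecture on the other, so it suffices to verify it for a single choice, say $Y = \oO_{\mathbb{P}^2}(-1)$.

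Second, for $\beta = d[l]$ with $d \in \{1,2,3\}$, the divisor set of $d$ contains at most $\{1, d\}$, so the multi-cover identity to verify reads
\begin{equation}
\mathrm{GW}^{\mathrm{twist}}_{0, d[l]}(Y)([\mathrm{pt}])
= \DT_3^{\mathrm{twist}}(Y)\bigl(d[l] \bigm| [\mathrm{pt}]\bigr) + \frac{1}{d^{2}}\, \DT_3^{\mathrm{twist}}(Y)\bigl([l] \bigm| [\mathrm{pt}]\bigr), \nonumber
\end{equation}
with the second term absent when $d = 1$. Both sides are computed in \cite[Sect.~3.2]{CMT}: the twisted GW invariants on the non-compact $Y$ are obtained from a torus-equivariant/localization analysis on the total space, while the twisted $\DT_3$ invariants reduce, via the zero-section isomorphism $M_{Y,\beta} \cong M_{\mathbb{P}^2,\beta}$, to an Euler-class computation on the moduli of one-dimensional stable sheaves on $\mathbb{P}^2$. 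Substituting the tabulated values for $d \leq 3$ verifies the identity on $\oO_{\mathbb{P}^2}(-1)$, and Corollary \ref{local surface symm} then transfers the conclusion to $\oO_{\mathbb{P}^2}(-2)$.

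The main obstacle is bookkeeping the sign factor $(-1)^{c_1(Y) \cdot \beta - 1}$ from the definition of the twisted $\DT_3$ invariants uniformly across the two geometries; without the symmetry result, controlling its parity when switching between $\oO(-1)$ and $\oO(-2)$ would be delicate. This is precisely the content of Proposition \ref{symm under change of line bdl}, which pins down that the twisted $\DT_3$ invariants agree for the pair $(\oO_{\mathbb{P}^2}(-1), \oO_{\mathbb{P}^2}(-2))$, so that the verification on one side carries to the other without sign ambiguity.
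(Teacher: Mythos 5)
Your proposal is correct and follows essentially the same route as the paper, which itself disposes of this proposition in one line: apply Corollary \ref{local surface symm} with $S=\mathbb{P}^2$, $L=\oO(-1)$ to identify the conjecture for $\oO_{\mathbb{P}^2}(-1)$ with that for $\oO_{\mathbb{P}^2}(-2)$, and then invoke the explicit computations of \cite[Sect.~3.2]{CMT} for $X=K_Y=\mathrm{Tot}_{\mathbb{P}^2}(\oO(-1)\oplus\oO(-2))$ in degrees $d\leqslant 3$. Your extra remarks on the divisor structure of the multiple-cover sum and on the sign bookkeeping being absorbed by Proposition \ref{symm under change of line bdl} are consistent with the paper's setup.
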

Let $Y=\mathrm{Tot}_{\mathbb{P}^1}\big(\oO_{\mathbb{P}^1}(l_1) \oplus \oO_{\mathbb{P}^1}(l_2)\big)$ be the total space of a rank two bundle over 
$\mathbb{P}^1$ with Fano condition $l_1+l_2\geqslant -1$ and $T\cong(\mathbb{C}^*)^2$ be a torus acting on the fibers. The moduli scheme $M_{Y,d}$ of one dimensional stable sheaves $F$'s with 
$[F]=d\,[\mathbb{P}^1]\in\mathbb{Z}[\mathbb{P}^1]$ and $\chi(F)=1$ is non-compact with compact $T$-fixed locus. 
We may define its twisted equivariant $\DT_3$ invariants by virtual localization formula \cite{GP}.
\begin{defi}\label{intro equi DT3 for local curve}(Definition \ref{equi DT3 for local curve})
The equivariant twisted $\DT_3$ invariant of $M_{Y,d}$ is 
\begin{equation}\DT_3^{\mathrm{twist}}(Y)(d):=(-1)^{d(l_1+l_2)-1}\int_{[M_{Y,d}^T]^{\mathrm{vir}}}e_T(N^{\mathrm{vir}})\in \mathbb{Q}(\lambda_1, \lambda_2),  \nonumber \end{equation}
where $N^{\mathrm{vir}}$ is the virtual normal bundle of $M_{Y,d}^T\hookrightarrow M_{Y,d}$.
\end{defi}
An equivariant version of Conjecture \ref{conj:GW/DT_3} can be proposed as follows:
\begin{conj}\label{intro conj for local curve}(Conjecture \ref{conj for local curve})
Let $Y=\mathrm{Tot}_{\mathbb{P}^1}\big(\oO_{\mathbb{P}^1}(l_1) \oplus \oO_{\mathbb{P}^1}(l_2)\big)$ with $l_1+l_2\geqslant -1$. Then 
\begin{equation}\mathrm{GW}^{\mathrm{twist}}_{0, d}(Y)=\sum_{k|d}\frac{1}{k^3} \DT_3^{\mathrm{twist}}(Y)(d/k). \nonumber \end{equation}
\end{conj}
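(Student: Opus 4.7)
The plan is to prove Conjecture~\ref{conj for local curve} by $T$-equivariant virtual localization on both sides, and then to match the contributions coming from the $T$-fixed loci. On the GW side, since $T = (\mathbb{C}^*)^2$ acts only on the fibres of $Y \to \mathbb{P}^1$, every $T$-fixed stable map factors through the zero section, giving
\[
\overline{M}_{0,n}(Y,d)^T \cong \overline{M}_{0,n}(\mathbb{P}^1, d),
\]
with virtual normal bundle $\dR\pi_*(f^*\oO(l_1)\otimes t_1) \oplus \dR\pi_*(f^*\oO(l_2) \otimes t_2)$, where $t_1,t_2$ are the equivariant parameters. Using $K_Y|_{\mathbb{P}^1} \simeq \oO(-l_1-l_2-2) \otimes (t_1 t_2)^{-1}$, virtual localization will express $\mathrm{GW}^{\mathrm{twist}}_{0,d}(Y)$ as an equivariant integral on $\overline{M}_{0,0}(\mathbb{P}^1, d)$ of an Euler class built from Hodge-type bundles, which is amenable to the Bryan--Pandharipande TQFT calculus for local curves.

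On the DT side, each $T$-fixed one-dimensional stable sheaf $F$ on $Y$ with $[F]=d[\mathbb{P}^1]$ and $\chi(F) = 1$ is set-theoretically supported on the zero section, and the $T$-weight decomposition of the pushforward $p_* F$ (with $p \colon Y \to \mathbb{P}^1$) identifies such $F$ with combinatorial data (a ``profile'' in the fibre directions) together with a point of a moduli space of semistable sheaves on $\mathbb{P}^1$. Gieseker stability and the condition $\chi(F) = 1$ restrict the admissible profiles, so $M_{Y,d}^T$ becomes a finite disjoint union of components over which $\mathrm{RHom}(F,F)_0$ can be computed equivariantly. Combined with the normalizing sign $(-1)^{d(l_1+l_2)-1}$, this produces the explicit rational function in $\lambda_1, \lambda_2$ giving each component's contribution to $\DT_3^{\mathrm{twist}}(Y)(d)$.

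The remaining step is to verify the identity
\[
\mathrm{GW}^{\mathrm{twist}}_{0, d}(Y) \;=\; \sum_{k \mid d} \frac{1}{k^3}\,\DT_3^{\mathrm{twist}}(Y)(d/k)
\]
as an equality in $\mathbb{Q}(\lambda_1,\lambda_2)$ after both sides are localized. The $d=1$ case is essentially immediate, since $\overline{M}_{0,0}(\mathbb{P}^1,1)$ is a point matching the structure sheaf of the zero section; small cases $d = 2, 3$ should then be tractable by direct enumeration of $T$-fixed combinatorial data and a Mathematica-assisted comparison of the two rational functions, giving the first substantive evidence. The main obstacle for a structural proof for all $d$ will be to propagate the twisting class $-\dR\pi_* f^* K_Y$ and the sign $(-1)^{d(l_1+l_2)-1}$ coherently through a degeneration / gluing argument (taking the Calabi-Yau specialization $l_1+l_2 = -2$ and the MNOP correspondence as the base case), and to show that the multiple-cover coefficients $k^{-3}$ correctly account for the non-reduced $T$-fixed components on the DT side arising from strictly semistable sheaves on $\mathbb{P}^1$ of rank $>1$; this last point is essentially where all the arithmetic difficulty is concentrated.
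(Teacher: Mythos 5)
The statement you are trying to prove is stated as a \emph{conjecture} in the paper (Conjecture \ref{conj for local curve}), and the paper does not prove it in general. What the paper actually establishes (Theorem \ref{thm local curve}) is that the twisted equivariant $\DT_3$ invariant of Definition \ref{equi DT3 for local curve}, including its normalizing sign $(-1)^{d(l_1+l_2)-1}$, coincides for $d=1,2$ with the equivariant $\DT_4$ invariant of $X=K_Y$ defined in \cite{CMT} via a chosen square root of $\chi_X(F,F)$; the proof is a term-by-term Serre-duality comparison of $\chi_Y(F,F)^{\mathrm{mov}}$ with $\chi_X(F,F)^{1/2,\mathrm{mov}}$, showing the total sign discrepancy is exactly $(-1)^{d(l_1+l_2)-1}$. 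The conjecture is then verified only for $d=1$ and for $d=2$ with $|l_1|,|l_2|\leqslant 10$ by importing the $\mathrm{GW}/\DT_4$ computations of \cite[Thm.~4.12]{CMT}. Your proposal, by contrast, aims at a direct localization proof for all $d$, but every substantive step is deferred: the ``Mathematica-assisted comparison'' for $d=2,3$, the propagation of the twist and the sign through a degeneration argument, and the treatment of multiple covers are all announced rather than carried out. As written, this is a research program, not a proof, and it does not reach even the partial cases the paper actually handles, since it never produces the explicit rational functions on either side.

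Two further points. First, your framing of where the $k^{-3}$ multiple-cover coefficients come from is off: with $[F]=d[\mathbb{P}^1]$ and $\chi(F)=1$ there are no strictly semistable sheaves (the relevant gcd is $1$), so the coefficients do not arise from ``non-reduced $T$-fixed components\ldots{} arising from strictly semistable sheaves''; they are built into the conjecture to absorb the degree-$k$ multiple covers on the GW side, exactly as in the Gopakumar--Vafa/Klemm--Pandharipande formalism. Second, if you intend to pursue the localization route, the efficient path for small $d$ is the one the paper takes: identify your $T$-fixed DT contributions with the square-root Euler classes already tabulated in \cite[Sect.~4]{CMT} (this is precisely the sign bookkeeping of Theorem \ref{thm local curve}), rather than recomputing $\mathrm{RHom}(F,F)_0$ from scratch. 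A genuinely new proof for all $d$ would require controlling both sides as rational functions in $\lambda_1,\lambda_2$ for arbitrary thickening profiles, which neither your sketch nor the paper achieves.
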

Here $\mathrm{GW}^{\mathrm{twist}}_{0, d}(Y)$ is the equivariant twisted GW invariant of $Y$ and the multiple cover coefficient is $1/k^3$ instead of $1/k^2$ due to the absence of insertions.

Combining with \cite[Thm. 4.12]{CMT}, we have:
\begin{thm}(Theorem \ref{thm local curve})
Definition \ref{intro equi DT3 for local curve} for $Y=\mathrm{Tot}_{\mathbb{P}^1}\big(\oO_{\mathbb{P}^1}(l_1) \oplus \oO_{\mathbb{P}^1}(l_2)\big)$ recovers the definition of equivariant $\DT_4$ invariants for $X=K_Y$ in (4.9), (4.17) of 
\cite{CMT}, i.e. for $d=1,2$,
\begin{equation}\DT_3^{\mathrm{twist}}(Y)(d)=\DT_4(X)(d\,[\mathbb{P}^1]). \nonumber \end{equation}
In particular, Conjecture \ref{intro conj for local curve} is true when 
(i) $d=1$ and (ii) $d=2$, $|l_1|,|l_2|\leqslant 10$.
\end{thm}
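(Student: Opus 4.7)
My approach is to separate the statement into two parts: a definitional comparison $\DT_3^{\mathrm{twist}}(Y)(d) = \DT_4(X)(d\,[\mathbb{P}^1])$ for $d=1,2$, followed by direct application of \cite[Thm.~4.12]{CMT} for the ``in particular'' clause. I would run equivariant virtual localization against the torus $T \cong (\mathbb{C}^*)^2$ on both sides. The zero-section pushforward $\iota: Y \hookrightarrow X$ identifies the moduli schemes $M_{Y,d} \cong M_{X, d[\mathbb{P}^1]}$, so both integrals reduce to integrals over the same compact $T$-fixed locus $M_{Y,d}^T$, and the task becomes matching the $T$-equivariant integrands.

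The core input is a splitting of the deformation-obstruction theory at a $T$-fixed sheaf $E$ supported on the zero section. Writing $\RHom_X(\iota_*E,\iota_*E)$ in terms of $\RHom_Y$-groups using the standard triangle for a closed embedding, the obstruction complex on $X$ decomposes as $\RHom_Y(E,E)$ together with a shifted summand $\RHom_Y(E,E)\otimes K_Y^{-1}[-1]$ which carries the nontrivial fiber weight of $T$ acting on $K_Y$. Under the Oh-Thomas square-root construction, the $\DT_4$ virtual normal bundle therefore differs from the $\DT_3$ virtual normal bundle by exactly this dual summand, and the equivariant square-root Euler class of that summand equals the usual equivariant Euler class up to a sign governed by the parity of its rank. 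That parity is precisely $c_1(Y)\cdot\beta - 1 = d(l_1+l_2)-1$, which is the sign built into Definition \ref{intro equi DT3 for local curve}.

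For $d=1$ the $T$-fixed locus consists of the single point $\oO_{\mathbb{P}^1}$ on the zero section, so the character comparison is a direct Čech-cohomology computation on $\mathbb{P}^1$ matched against formula (4.9) of \cite{CMT}. For $d=2$ there are finitely many isolated $T$-fixed strata, and I would match each contribution with (4.17) of \cite{CMT} using the same splitting argument. Once the identification is established, the ``in particular'' clause is automatic: the equivariant form of Conjecture \ref{intro conj for local curve} becomes the equivariant $\DT_4$/GW correspondence on $X=K_Y$, which is exactly what \cite[Thm.~4.12]{CMT} has verified for $d=1$ and for $d=2$ with $|l_1|,|l_2|\leqslant 10$.

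The main obstacle is the orientation/sign matching. In the equivariant Oh-Thomas construction, the square-root Euler class of a self-dual bundle depends on a choice of ``half,'' and this choice manifests as a $\pm 1$ per fixed component. The delicate step is to verify that the coherent choice forced by the Oh-Thomas orientation at each fixed point of $M_{Y,2}^T$ reassembles into the single global sign $(-1)^{d(l_1+l_2)-1}$ predicted by Definition \ref{intro equi DT3 for local curve}; the numerical bound $|l_i|\leqslant 10$ is inherited from the computational scope of \cite[Thm.~4.12]{CMT} rather than any intrinsic obstruction to the identification.
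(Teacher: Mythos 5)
Your plan is essentially the paper's: localize with respect to $T$, identify $M_{Y,d}\cong M_{X,d}$ via the zero section, split $\RHom_X(\iota_*E,\iota_*E)$ into $\RHom_Y(E,E)$ plus a Serre-dual shifted summand, and compare the chosen half of the $\DT_4$ obstruction theory with the $\DT_3$ one, the entire content being the sign. Two caveats. First, the framing via the Oh--Thomas orientation is off target: the invariants (4.9), (4.17) of \cite{CMT} are defined by an explicit, ad hoc choice of square root of $\chi_X(F,F)$ written out in \cite[Sect.~4.4]{CMT}, so there is no intrinsic orientation to appeal to --- the comparison must be made against that concrete formula. Second, and more substantively, your proposed mechanism for the sign (that the square-root Euler class of the dual summand differs from the usual Euler class by the parity of its rank, identified with $c_1(Y)\cdot\beta-1$) is not how the sign actually arises: if the square root of \cite{CMT} were exactly the $Y$-half $\chi_Y(F,F)$ there would be no sign at all, and the discrepancy appears only because that particular square root replaces \emph{some} of the terms of $\chi_Y(F,F)^{\mathrm{mov}}$ by their Serre duals. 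The paper's proof for $d=2$ consists precisely of listing which terms are dualized and computing the total sign $(-1)^{\Delta}$ with $\Delta=2l_3+2(l_1+l_2+l_3)+5$, which is odd and hence equals $(-1)^{2(l_1+l_2)-1}$, the sign in Definition~\ref{equi DT3 for local curve}. Your proposal correctly identifies this as ``the delicate step'' but defers it rather than carrying it out, so the decisive computation is missing; the final reduction of the ``in particular'' clause to \cite[Thm.~4.12]{CMT} is correct and is exactly what the paper does.
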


\section{Definitions and Conjectures}

\subsection{Review of $\rm{GV}/\DT_4$ conjecture}
Let $X$ be a smooth projective Calabi-Yau 4-fold, i.e. $K_X\cong \oO_X$. Fixing $\beta\in H_2(X,\mathbb{Z})$ and $n\geqslant 0$, 
the genus 0 Gromov-Witten invariants of $X$ are defined using insertions: for integral classes $\gamma_i \in H^{m_i}(X, \mathbb{Z}), \
1\leqslant i\leqslant n$, one defines
 \begin{equation}
\mathrm{GW}_{0, \beta}(X)(\gamma_1, \ldots, \gamma_n)
:=\int_{[\overline{M}_{0, n}(X, \beta)]^{\rm{vir}}}
\prod_{i=1}^n \mathrm{ev}_i^{\ast}(\gamma_i),
\nonumber \end{equation}
where $\mathrm{ev}_i \colon \overline{M}_{0, n}(X, \beta)\to X$
%\begin{align*}
%\mathrm{ev}_i \colon \overline{M}_{g, n}(X, \beta)
%\to X
%\end{align*}
is the $i$-th evaluation map. 

The Gopakumar-Vafa type invariants
\begin{align}\label{intro:n}
n_{0, \beta}(X)(\gamma_1, \ldots, \gamma_n) \in \mathbb{Q}
\end{align}
are defined by Klemm-Pandharipande \cite{KP} in terms of the identity
\begin{align*}
\sum_{\beta>0}\mathrm{GW}_{0, \beta}(X)(\gamma_1, \ldots, \gamma_n)\,q^{\beta}=
\sum_{\beta>0}n_{0, \beta}(X)(\gamma_1, \ldots, \gamma_n) \sum_{d=1}^{\infty}
d^{n-3}\,q^{d\beta},
\end{align*}
and conjectured to be integers.

In \cite{CMT}, Cao-Maulik-Toda gave a sheaf-theoretic interpretation of the above invariants (\ref{intro:n}) in terms of
Donaldson-Thomas invariants for CY 4-folds
(called $\mathrm{DT}_4$ invariants).
More specifically, we consider the moduli scheme $M_{X,\beta}$ of 1-dimensional stable sheaves on $X$ with Chern character $(0,0,0,\beta,1)$.
Let  $\eE\to X\times M_{X,\beta}$ be its universal sheaf and 
\begin{equation}\label{det line bdl}
 \lL:=\mathrm{det}(\dR \hH om_{\pi_M}(\eE, \eE))
 \in \Pic(M_{X,\beta}), \quad  
\pi_M \colon X \times M_{X,\beta}\to M_{X,\beta},
\end{equation}
be the determinant line bundle, equipped with a symmetric pairing $Q$ induced by Serre duality. 
%and $\pi_M$ and $\pi_X$ are projections from $M_{\omega}(v)\times X$ to corresponding factors.
By the results of \cite{CL, BJ}, if the structure group of the line bundle $(\lL,Q)$ can be reduced from 
$O(1,\mathbb{C})$ to $SO(1,\mathbb{C})=\{1\}$ (this is always true by \cite{CGJ}),
%\,\footnote{An orientability result was proved when $X$ satisfies $\mathrm{Hol}(X)=SU(4)$ and $H^{\mathrm{odd}}(X,\mathbb{Z})=0$ \cite[Thm. 2.2]{CL2}.},
there exists a $\DT_4$ virtual class
\begin{align}\label{intro:DT4vir}
[M_{X,\beta}]^{\rm{vir}} \in H_{2}(M_{X,\beta}, \mathbb{Z}), 
\end{align}
which depends on the choice of an orientation, i.e. the reduction of the structure group of $(\lL,Q)$. 
The choices form a torsor for $H^{0}(M_{X,\beta},\mathbb{Z}_2)$.
%An orientability result was proved when $X$ satisfies $\mathrm{Hol}(X)=SU(4)$ and $H^{\mathrm{odd}}(X,\mathbb{Z})=0$ \cite[Thm. 2.2]{CL2}.

In order to define counting invariants, we require insertions. Define the map $\tau$ by 
\begin{align*}
\tau \colon H^{m}(X)\to H^{m-2}(M_{X,\beta}), \quad \tau(\gamma)=\pi_{M\ast}(\pi_X^{\ast}\gamma \cup\ch_3(\eE) ),
\end{align*}
where $\pi_X$, $\pi_M$ are projections from $X \times M_{X,\beta}$
to corresponding factors, and $\ch_3(\eE)$ is the
Poincar\'e dual of the
fundamental cycle of the universal sheaf $\eE$.

For $\gamma_i \in H^{m_i}(X, \mathbb{Z}), \
1\leqslant i\leqslant n$, the $\mathrm{DT}_{4}$ invariant
is defined by 
\begin{align*}\mathrm{DT_{4}}(X)(\beta \mid \gamma_1,\ldots,\gamma_n):=\int_{[M_{X,\beta}]^{\rm{vir}}} \prod_{i=1}^{n}\tau(\gamma_i). 
 \end{align*}

\begin{conj}\label{conj:GW/DT_4}\emph{(\cite[Conjecture 1.3]{CMT})}
The identity 
\begin{align*}
n_{0,\beta}(X)(\gamma_1, \ldots, \gamma_n)=
\mathrm{DT}_{4}(X)(\beta \mid \gamma_1, \ldots, \gamma_n)
\end{align*}
holds for a suitable choice of orientation in defining the RHS.

In particular, we have the multiple cover formula
\begin{align*}
\mathrm{GW}_{0, \beta}(X)(\gamma)=
\sum_{k|\beta}\frac{1}{k^{2}}\cdot\mathrm{DT}_{4}(X)(\beta/k \mid \gamma).
\end{align*}
\end{conj}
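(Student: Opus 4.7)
The plan is to approach Conjecture \ref{conj:GW/DT_4} by first reducing it to a local geometry where the $\DT_4$ moduli spaces admit concrete descriptions in terms of lower-dimensional sheaf moduli, rather than attempting a direct comparison of the two virtual classes on a general CY 4-fold. Specifically, I would specialize to $X=K_Y$ for a Fano 3-fold $Y$. The negativity of $K_Y$ forces the zero-section inclusion $\iota\colon Y\hookrightarrow X$ to induce an isomorphism $M_{Y,\beta}\cong M_{X,\beta}$ on moduli of one-dimensional stable sheaves with $\chi=1$, and on the virtual class level the $\DT_4$ class reduces to the classical $\DT_3$ class weighted by a sign $(-1)^{c_1(Y)\cdot\beta-1}$. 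On the stable maps side, a standard Mumford/Calabi--Yau twist identifies $\mathrm{GW}_{0,\beta}(X)$ with the twisted invariant $\mathrm{GW}^{\mathrm{twist}}_{0,\beta}(Y)$ obtained by capping with the Euler class of (\ref{virt norm bdl intro}). Thus the original statement becomes the twisted $\mathrm{GW}/\DT_3$ identity of Conjecture \ref{conj:GW/DT_3 intro}, which is the form actually amenable to computation.

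Next, I would verify the twisted identity in a ladder of increasingly nontrivial examples. For primitive classes with rigid representatives, e.g.\ the line class on smooth hypersurfaces $Y_d\subset\mathbb{P}^4$ with $d\leqslant 4$, the moduli of lines is classical and both sides are computable from normal-bundle data and excess intersection. For $Y=S\times\mathbb{P}^1$ with $S$ a del Pezzo surface, two natural families of curve classes appear: multiples of the $\mathbb{P}^1$-fiber (reducing to a local $\mathbb{A}^2$ computation handled by equivariant localization) and classes $\beta\in H_2(S,\mathbb{Z})\subseteq H_2(Y,\mathbb{Z})$ (where sheaves reduce to the surface and the twisted $\DT_3$ invariant becomes, up to an explicit constant, an ordinary $\DT_3$ invariant on the local CY3 $K_S$). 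In the second case the twisted conjecture becomes equivalent to Katz's multiple-cover conjecture, which is already known for toric $S$, giving a clean proof. For non-compact $Y$ of the form $\mathrm{Tot}_S(L)$ with $L$ negative, I would exploit the symmetry $L\leftrightarrow L^{-1}\otimes K_S$ (which arises from Serre duality on $S$) to compress the number of independent cases to verify.

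The main obstacle, and the feature that distinguishes the $\DT_4$ setting from the $\DT_3$ one, is fixing the orientation on $M_{X,\beta}$. The virtual class (\ref{intro:DT4vir}) depends on a reduction of structure group of $(\lL,Q)$, and a priori the choices form a torsor over $H^0(M_{X,\beta},\mathbb{Z}_2)$; without a canonical choice, both sides have sign ambiguities that must be matched component by component. The key structural observation to pursue is that, on $X=K_Y$, the right orientation appears to be forced by the explicit twist $(-1)^{c_1(Y)\cdot\beta-1}$. This strongly suggests an intrinsic canonical orientation on general CY 4-folds, coming from a parity of a $c_1$-class; isolating this would reduce the global conjecture to a purely numerical matching.

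Once the orientation is in hand, the multiple-cover prefactors $1/k^{3-n}$ should emerge by a parallel analysis on both sides: equivariant localization on local curves $Y=\mathrm{Tot}_{\mathbb{P}^1}(\oO(l_1)\oplus\oO(l_2))$ (as in Definition \ref{intro equi DT3 for local curve} and the comparison with \cite[Thm.~4.12]{CMT}) will reproduce the $1/k^3$ factor in the insertionless case, and degeneration/deformation to local curve neighbourhoods of rigid rational components inside a general $Y$ would propagate this identity. I expect the orientation step to be the hard part; the rest should reduce to combining the example computations already summarised in the introduction with a GAGA-style patching argument.
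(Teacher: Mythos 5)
The statement you are asked to prove is a \emph{conjecture} (\cite[Conjecture 1.3]{CMT}); the paper does not prove it, and neither does your proposal. What you have written is an accurate summary of the paper's program --- specialize to $X=K_Y$ for a Fano 3-fold $Y$, identify $M_{X,\beta}\cong M_{Y,\beta}$ and the $\DT_4$ class with the signed $\DT_3$ class, identify $\mathrm{GW}_{0,\beta}(X)$ with the twisted invariants of $Y$, and thereby reduce to Conjecture \ref{conj:GW/DT_3}, which is then checked on lines in hypersurfaces, fiber classes and surface classes in $S\times\mathbb{P}^1$, local surfaces, and local curves. But this is evidence, not a proof, and you should be explicit that your plan terminates in a list of verified special cases rather than in the general identity.

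Two steps in your outline would fail as stated. First, the reduction to $K_Y$ covers only a non-compact local model; the conjecture concerns arbitrary smooth projective CY 4-folds, and there is no ``GAGA-style patching'' or degeneration argument in the literature (or in this paper) that propagates the local-curve multiple cover formula to a general $X$ --- curve classes in a projective CY 4-fold need not be supported in neighbourhoods isomorphic to $\mathrm{Tot}_{\mathbb{P}^1}(\oO(l_1)\oplus\oO(l_2))$, and the $\DT_4$ virtual class is not known to localize to such neighbourhoods. Second, the orientation problem is not resolved by observing that $(-1)^{c_1(Y)\cdot\beta-1}$ works in the local case: on a genuine CY 4-fold $c_1(Y)$ has no analogue, the orientations form a torsor over $H^0(M_{X,\beta},\mathbb{Z}_2)$ with no known canonical point, and the conjecture is deliberately phrased as ``for a suitable choice of orientation'' precisely because no intrinsic choice has been identified. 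If you intend to present this material, present it as the paper does: as a reformulation (Conjecture \ref{conj:GW/DT_3}) together with supporting computations (Propositions \ref{lines on Fano}, \ref{fiber classes}, \ref{compute invs for fano surface}, Theorems \ref{del-Pez} and \ref{thm local curve}), not as a proof of Conjecture \ref{conj:GW/DT_4}.
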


\subsection{The conjecture on Fano 3-folds}
When $X=K_Y$ is the total space of the canonical bundle of a smooth Fano 3-fold $Y$.
Conjeture \ref{conj:GW/DT_4} can be completely rephrased on $Y$ as follows.

${}$ \\
\textbf{Twisted GW invariants}.
Fixing $\beta\in H_2(Y,\mathbb{Z})$ and $n\geqslant0$, the \textit{twisted} genus 0 Gromov-Witten invariants of $Y$ are defined using
insertions: for integral classes $\gamma_i \in H^{m_i}(Y, \mathbb{Z})$, $1\leqslant i\leqslant n$, and 
the rank $\big(\int_{\beta}c_1(Y)-1\big)$ vector bundle 
\begin{equation}\label{virt norm bdl}-\dR\pi_{*}f^{*}K_Y\in K(\overline{M}_{0, n}(Y, \beta)),   \end{equation} 
where $\pi: \mathcal{C}\to \overline{M}_{0, n}(X, \beta)$ is the universal curve and $f:\cC\to Y$ is the universal map,
one defines
\begin{equation}\label{twist GW}
\mathrm{GW}^{\mathrm{twist}}_{0, \beta}(Y)(\gamma_1, \ldots, \gamma_n)
:=\int_{[\overline{M}_{0, n}(Y, \beta)]^{\rm{vir}}}
e(-\dR\pi_{*}f^{*}K_Y)\cup\prod_{i=1}^n \mathrm{ev}_i^{\ast}(\gamma_i)\in\mathbb{Q}, \end{equation}
where $\mathrm{ev}_i \colon \overline{M}_{0, n}(Y, \beta)\to Y$ is the $i$-th evaluation map.

${}$ \\
\textbf{Twisted $\DT_3$ invariants}.
Let $M_{Y,\beta}$ be the moduli scheme of one dimensional stable sheaves $E$'s with $[E]=\beta$, $\chi(E)=1$ 
\big(i.e. $\mathrm{ch}(E)=(0,0,\beta,1-\frac{c_1(Y)\cdot\beta}{2})$\big). We define the map $\tau$ by 
\begin{align*}
\tau \colon H^{m}(Y)\to H^{m-2}(M_{Y,\beta}), \quad
\tau(\gamma)=\pi_{M\ast}(\pi_Y^{\ast}\gamma \cup\ch_2(\eE) ),
\end{align*}
where $\pi_Y$, $\pi_M$ are projections from $Y\times M_{Y,\beta}$
to corresponding factors, and $\ch_2(\eE)$ is the
Poincar\'e dual of the fundamental cycle of the universal sheaf $\eE$.

For $\gamma_i \in H^{m_i}(Y, \mathbb{Z}), \
1\leqslant i\leqslant n$, the \textit{twisted} $\DT_3$ invariant is 
\begin{equation}\label{twist DT_3}\DT_3^{\mathrm{twist}}(Y)(\beta\textrm{ }|\textrm{ }\gamma_1,\ldots,\gamma_n):=(-1)^{c_1(Y)\cdot\beta-1}
\cdot\int_{[M_{Y,\beta}]^{\rm{vir}}}\prod_{i=1}^{n}\tau(\gamma_i)\in\mathbb{Z},  \end{equation}
where $[M_{Y,\beta}]^{\rm{vir}}\in H_2(M_{Y,\beta},\mathbb{Z})$ is the $\DT_3$ virtual class \cite{Thomas}. \\

When restricting to local CY 4-fold $K_Y$, we conjecture that the ambiguity in choosing the right orientation in Conjecture \ref{conj:GW/DT_4}
can be fixed by introducing twisted $\DT_3$ invariants.  
\begin{conj}\label{conj:GW/DT_3}
Let $Y$ be a smooth Fano 3-fold. Then we have 
\begin{equation}\mathrm{GW}^{\mathrm{twist}}_{0, \beta}(Y)(\gamma_1, \ldots, \gamma_n)=
\sum_{k|\beta}\frac{1}{k^{3-n}}\cdot\DT_3^{\mathrm{twist}}(Y)(\beta/k\textrm{ }|\textrm{ }\gamma_1,\ldots,\gamma_n). \nonumber \end{equation}
\end{conj}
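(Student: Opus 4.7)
The plan is to deduce Conjecture \ref{conj:GW/DT_3} from Conjecture \ref{conj:GW/DT_4} on the local Calabi--Yau $X=K_Y$, by matching both sides of the latter on $X$ with the twisted invariants of $Y$ appearing in the former. Concretely, I would prove that for $\beta\in H_2(Y)\subseteq H_2(X)$ and classes $\gamma_i\in H^{*}(Y)$,
\[
\mathrm{GW}_{0,\beta}(X)(\iota_{*}\gamma_1,\ldots,\iota_{*}\gamma_n)=\mathrm{GW}^{\mathrm{twist}}_{0,\beta}(Y)(\gamma_1,\ldots,\gamma_n),
\]
and that for a canonical choice of orientation,
\[
\mathrm{DT}_4(X)(\beta\mid \iota_{*}\gamma_1,\ldots,\iota_{*}\gamma_n)=\DT_3^{\mathrm{twist}}(Y)(\beta\mid \gamma_1,\ldots,\gamma_n).
\]
Both identifications rest on the single fact that $K_Y^{-1}$ is ample, so every stable map (respectively, one-dimensional stable sheaf) on $X$ in class $\beta\in H_2(Y)$ is scheme-theoretically supported on the zero section $\iota:Y\hookrightarrow X$.

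On the GW side, this support statement yields the isomorphism $\overline{M}_{0,n}(X,\beta)\cong \overline{M}_{0,n}(Y,\beta)$, and the standard comparison of perfect obstruction theories for stable maps into the total space of a vector bundle gives
\[
[\overline{M}_{0,n}(X,\beta)]^{\rm{vir}}=e(-\dR\pi_{*}f^{*}K_Y)\cap [\overline{M}_{0,n}(Y,\beta)]^{\rm{vir}},
\]
which is exactly the twisted class defining $\mathrm{GW}^{\mathrm{twist}}_{0,\beta}(Y)$. On the sheaf side, the isomorphism $\iota_{*}:M_{Y,\beta}\simto M_{X,\beta}$ is already recalled in the introduction; by \cite{CL} the $\DT_4$ obstruction theory on $M_{X,\beta}$ is a self-dual extension of the $\DT_3$ obstruction theory on $M_{Y,\beta}$, and the associated quadratic refinement collapses its virtual class to $\pm[M_{Y,\beta}]^{\rm{vir}}$, the sign being dictated by the choice of orientation on $(\lL,Q)$. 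The insertions transport cleanly: $\ch_3(\iota_{*}\eE)=\iota_{*}\ch_2(\eE)$ by Grothendieck--Riemann--Roch for the closed immersion $\iota$, whence $\tau_X(\iota_{*}\gamma)=\iota_{*}\tau_Y(\gamma)$ under projection formula, and the two integrals agree up to the global sign. Granting all this, the multiple-cover identity on $X$ specializes directly to the one on $Y$.

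The main obstacle is therefore to prove that the canonical orientation from \cite{CGJ} produces exactly the twist sign $(-1)^{c_1(Y)\cdot\beta-1}$. Since Conjecture \ref{conj:GW/DT_4} itself is open and the orientation-choice problem there is not understood conceptually, I do not expect a uniform proof; my plan is to verify the resulting equality of twisted invariants case by case, as summarized in Propositions \ref{lines on Fano intro}--\ref{compute invs for fano surface intro} and Theorem \ref{del-Pez intro}. For each explicit $(Y,\beta)$ of interest I would (a) compute $\mathrm{GW}^{\mathrm{twist}}_{0,\beta}(Y)$ via quantum Lefschetz (for Fano complete intersections in projective space) or equivariant localization (for toric or $\mathbb{P}^1$-bundle targets); (b) compute $\DT_3^{\mathrm{twist}}(Y)(\beta\mid\cdots)$ from an explicit description of $M_{Y,\beta}$ (e.g.\ as the Hilbert scheme of lines, a relative Hilbert scheme inside a linear system, or via the product structure $S\times\mathbb{P}^1$ identifying $M_{Y,\beta}$ with a moduli on $S$); and (c) check the multiple-cover identity with the predicted twist sign. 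A heuristic geometric explanation of why the sign must take this specific form will be developed in Section \ref{geo explain}, which I would use as a guiding principle rather than as a replacement for the case-by-case verification.
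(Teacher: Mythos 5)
Your proposal matches the paper's treatment: the statement is a conjecture, and the paper likewise derives it by rephrasing the $\mathrm{GV}/\mathrm{DT}_4$ conjecture on $X=K_Y$ (identifying $\mathrm{GW}(X)$ with twisted GW of $Y$ via the obstruction-theory comparison, and the $\mathrm{DT}_4$ class with the $\mathrm{DT}_3$ class under $\iota_*$), offers only a heuristic explanation of why the sign $(-1)^{c_1(Y)\cdot\beta-1}$ fixes the orientation, and then verifies the identity case by case in the same families of examples you list. No genuinely different route and no gap beyond what the paper itself leaves open.
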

\begin{rmk}\label{constrain on insert}
Note that (\ref{twist GW}) and (\ref{twist DT_3}) vanish unless  
\begin{equation}\sum_{i=1}^{n}(m_i-2)=2. \nonumber \end{equation}
If $m_n=2$, we have
\begin{align*}
\mathrm{GW}^{\mathrm{twist}}_{0, \beta}(Y)(\gamma_1, \ldots, \gamma_n) &=
(\beta \cdot \gamma_n) \cdot \mathrm{GW}^{\mathrm{twist}}_{0, \beta}(Y)(\gamma_1, \ldots, \gamma_{n-1}), \\
\DT_3^{\mathrm{twist}}(Y)(\beta\mid \gamma_1, \ldots, \gamma_n)
&=(\beta \cdot \gamma_n) \cdot
\DT_3^{\mathrm{twist}}(Y)(\beta\mid \gamma_1, \ldots, \gamma_{n-1}).
\end{align*}
Therefore we may assume that $m_i \geqslant 3$ for all $i$
in Conjecture~\ref{conj:GW/DT_3}. There are
two possibilities
\begin{itemize}
\item $n=1$ and $m_1=4$,
\item $n=2$ and $m_1=m_2=3$.
\end{itemize}
In particular, when $H^{3}(Y, \mathbb{Z})=0$,
we only need to consider the first case.
\end{rmk}

\subsection{Geometric explanation of the conjecture}\label{geo explain}
Conjecture \ref{conj:GW/DT_3} on a Fano 3-fold $Y$ can be formulated in terms of Conjecture \ref{conj:GW/DT_4} on a non-compact 
CY 4-fold $X=K_Y$. 

More specifically, by a stability argument (e.g. \cite{CL}), one dimensional stable sheaves on $X$ are scheme theoretically supported on the zero section $\iota: Y\to X$, so a moduli scheme $M_{X,\beta}$ of one dimensional stable sheaves on $X$ is isomorphic to a moduli scheme $M_{Y,\beta}$ of one dimensional stable sheaves on $Y$ via the push-forward by $\iota$. Under this identification, the $\DT_4$ virtual class of $M_{X,\beta}$ can be identified with the $\DT_3$ virtual class of $M_{Y,\beta}$.

As for GW theory, the moduli stack of stable maps to $X$ is the moduli stack of stable maps to the compact divisor $Y\subseteq X$, 
but the obstruction theories are different, and the difference is measured by the Euler class of the vector bundle (\ref{virt norm bdl}).
Hence Conjecture \ref{conj:GW/DT_3} can be regarded as a specialization of Conjecture \ref{conj:GW/DT_4} 
on the (non-compact) CY 4-fold $X=K_Y$, and a geometric explanation of the conjecture follows from the one given in \cite[Sect. 1.4]{CMT}
modulo the issue of choosing an orientation.

When we restrict to $X=K_Y$, we expect to be able to fix the ambiguity of choosing an orientation by the following reason, where we only consider rational curves and ignore curves of higher genus.
%$g\geqslant1$. 

Let $Y$ be a `good' Fano 3-fold, in the following sense:
\begin{enumerate}
\item
any rational curve in $Y$
comes with a compact smooth family of embedded rational curves, whose general member is smooth with normal bundle $\mathcal{O}_{\mathbb{P}^{1}}(a,b)$ specified as follows;  
\item
when $c_1(Y)\cdot\beta$ is even, then $a=b=\frac{1}{2}c_1(Y)\cdot\beta-1$;
\item
when $c_1(Y)\cdot\beta$ is odd, then $a=\frac{1}{2}\big(c_1(Y)\cdot\beta-1\big)$, $b=\frac{1}{2}\big(c_1(Y)\cdot\beta-3\big)$.
\end{enumerate}

Let $C$ be a rational curve in $Y$ with $[C]=\beta\in H_{2}(Y,\mathbb{Z})$, and $\{C_{t}\}_{t\in T}$ be the smooth family $C$ sits in. Any one-dimensional stable sheaf $[E]\in M_{Y,\beta}$ supported on $\{C_{t}\}_{t\in T}$
is $\mathcal{O}_{C_{t}}$ for some $t\in T$. 
For a general $t \in T$ such that 
$N_{C_{t}/Y}\cong\mathcal{O}_{\mathbb{P}^{1}}(a,b)$, 
there exist isomorphisms
\begin{align*}
&\Ext^{1}_Y(E,E)\cong H^{0}(C_{t},N_{C_{t}/Y}) 
\cong \mathbb{C}^{c_1(Y)\cdot\beta},  \\
&\Ext^{2}_Y(E,E)\cong H^{0}(C_{t},\wedge^2N_{C_{t}/Y}) \cong\mathbb{C}^{c_1(Y)\cdot\beta-1}. 
\end{align*}
So the obstruction sheaf ($\Ext^{2}_Y(E,E)$'s) of $M_{Y,\beta}$ extends to a vector bundle ($H^{0}(C_{t},\wedge^2N_{C_{t}/Y})$'s) over the family $T$ whose Euler class gives $[M_{Y,\beta}]^{\rm{vir}}$.
The insertion $\tau(\gamma)$ imposes a codimension one constrain on the deformation space of curves in $Y$, whose integration   
against the virtual class gives the $\DT_3$ invariant.  
%Since the insertion $\tau(\gamma)$ 
%imposes a codimension one constraint on the deformation space of curves in $Y$, hence 
%for $\gamma \in H^4(Y, \mathbb{Z})$ and all families $\{C_{T_{i}}\}$ of rational curves,  
%\begin{align*} 
%\int_{[M_{Y,\beta}]^{\rm{vir}}}\tau(\gamma)=\sum_{i}\gamma\cdot [C_{T_{i}}]=:n_{0,\beta}(Y)(\gamma)
%\end{align*} 
%would be the number of rational curves in $Y$ which are incident to the 2-cycle dual to $\gamma$.

As for a stable map $f:\mathbb{P}^1\to Y$, we may view it as a composition of a multiple cover $t:\mathbb{P}^1\to \mathbb{P}^1$ and 
an embedding $i:\mathbb{P}^1\hookrightarrow Y$ with some $C_t$ as image. The usual obstruction space vanishes by our generic assumption. 
But to define twisted GW invariants 
(\ref{twist GW}), we need an extra `obstruction bundle' whose fiber over such $f$ is $H^1(\mathbb{P}^1,f^*K_Y)$. 
For an embedding $f$ with image $C_t$, we have canonical isomorphisms
\begin{equation}H^1(\mathbb{P}^1,f^*K_Y)\cong H^1(C_t,\wedge^2 N^*_{C_{t}/Y}\otimes K_{C_t})\cong 
H^{0}(C_{t},\wedge^2N_{C_{t}/Y})^*\cong \Ext^{2}_Y(E,E)^*. \nonumber \end{equation}
So modulo the multiple cover factor, we can see the obstruction bundle for $M_{Y,\beta}$ is dual to the extra obstruction bundle 
(with rank $(c_1(Y)\cdot\beta-1)$) for defining
twisted GW invariants. Thus, 
\begin{equation}\mathrm{GW}^{\mathrm{twist}}_{0, \beta}(Y)(\gamma)=
\sum_{k|\beta}\frac{(-1)^{c_1(Y)\cdot\frac{\beta}{k}-1}}{k^2}\cdot \int_{[M_{Y,\beta/k}]^{\mathrm{vir}}}\tau(\gamma), \nonumber \end{equation}
where signs appear since the Euler class of the dual bundle of a bundle $E$ differs with $e(E)$ by a sign $(-1)^{\mathrm{rk} E}$.
Conjecture \ref{conj:GW/DT_3} in this case then follows from the definition of twisted $\DT_3$ invariants (\ref{twist DT_3}).
Of course, this only gives a geometric explanation (in the `good' Fano 3-fold case) rather than a rigorous proof. We will check the conjecture in examples in the coming sections.

\section{Compact examples}

\subsection{Line classes on Fano hypersurfaces}
We verify Conjecture \ref{conj:GW/DT_3} for line classes on Fano hypersurfaces in $\mathbb{P}^4$.

\begin{prop}\label{lines on Fano}
Let $Y_d\subseteq\mathbb{P}^4$ be a smooth hypersurface of degree $d\leqslant 4$. 
Then Conjecture \ref{conj:GW/DT_3} is true for the line class $\beta=[l]\in H_2(Y_d,\mathbb{Z})$.
\end{prop}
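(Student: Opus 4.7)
Since $H_2(Y_d,\mathbb{Z}) = \mathbb{Z}\cdot[l]$ by Lefschetz, the class $[l]$ is primitive and the multiple-cover sum in Conjecture \ref{conj:GW/DT_3 intro} collapses to its $k=1$ term. My plan is to establish, for each $d \in \{1,2,3,4\}$, the equality
\begin{equation*}
\mathrm{GW}^{\mathrm{twist}}_{0,[l]}(Y_d)(\gamma_1,\ldots,\gamma_n) = \DT_3^{\mathrm{twist}}(Y_d)([l]\mid\gamma_1,\ldots,\gamma_n)
\end{equation*}
by making the geometric heuristic of Section \ref{geo explain} rigorous on Fano hypersurfaces in $\mathbb{P}^4$, transferring both integrands to explicit integrals over the Fano scheme of lines $F(Y_d) \subset G(2,5)$.

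First, I would identify both moduli spaces with $F(Y_d)$. Classically $F(Y_d)$ is smooth of the expected dimension $5-d$ for a smooth $Y_d$, with a general line $C$ having balanced normal bundle of total degree $c_1(Y_d)\cdot[l] - 2 = 3-d$. On the sheaf side, $C \mapsto \mathcal{O}_C$ gives $M_{Y_d,[l]} \cong F(Y_d)$. On the stable-map side, primitivity of $[l]$ ensures every stable map is an embedding, so $\overline{M}_{0,n}(Y_d,[l])$ is the $n$-fold fibered power of the universal line $\mathcal{C} \to F(Y_d)$.

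The key step is to identify the two obstruction bundles as duals of each other on $F(Y_d)$. On the DT side, the local-to-global Ext spectral sequence, together with the vanishing $H^1(C, N_{C/Y_d}) = H^2(\mathcal{O}_C) = 0$ on $C \cong \mathbb{P}^1$ for balanced normal bundle, yields
\begin{equation*}
\mathrm{Ext}^2(\mathcal{O}_C,\mathcal{O}_C) \cong H^0(C,\det N_{C/Y_d}),
\end{equation*}
a vector space of dimension $4-d$; these fibers assemble into a bundle $\mathrm{Ob}_{\mathrm{DT}}$ on $F(Y_d)$. On the GW side, by adjunction $\omega_C \cong K_{Y_d}|_C \otimes \det N_{C/Y_d}$ and Serre duality on $\mathbb{P}^1$,
\begin{equation*}
H^1(C, K_{Y_d}|_C) \cong H^0(C,\det N_{C/Y_d})^{\vee}.
\end{equation*}
Globalising this duality over $F(Y_d)$ identifies the rank-$(4-d)$ bundle $-\mathbf{R}\pi_*f^*K_{Y_d}$ of (\ref{virt norm bdl intro}) with $\mathrm{Ob}_{\mathrm{DT}}^{\vee}$. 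Since $e(E^{\vee}) = (-1)^{\mathrm{rk}\,E} e(E)$, this introduces the sign $(-1)^{4-d} = (-1)^{c_1(Y_d)\cdot[l]-1}$, which exactly cancels the sign built into the definition (\ref{twist DT_3 intro}) of $\DT_3^{\mathrm{twist}}$. A parallel matching of $\tau(\gamma_i)$ with $\mathrm{ev}_i^*(\gamma_i)$ via pushforward along $\mathcal{C} \to F(Y_d)$ (using $\mathrm{ch}_2(\mathcal{O}_{\mathcal{C}}) = -[\mathcal{C}]$ together with $\mathrm{ev} = \pi_Y \circ j_{\mathcal{C}}$) then identifies the two integrands.

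The main technical obstacle is promoting the fiberwise Serre duality to a global isomorphism of bundles over $F(Y_d)$, which requires careful bookkeeping of the relative dualizing sheaf of the universal line and handling of lines with jumping normal bundle type; this is most delicate for $d = 2, 3$, where $\dim F(Y_d) > 1$ and there is genuine global content. The case $d = 4$ serves as a sanity check: the twist has rank zero, both obstruction bundles are trivial, and the identity reduces to a weighted classical count of lines on a smooth quartic 3-fold. The case $d = 1$, where $Y_1 = \mathbb{P}^3$ and $F(Y_1) = G(2,4)$, may be verified by Schubert calculus as an additional consistency check.
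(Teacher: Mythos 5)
Your proposal follows essentially the same route as the paper: identify $M_{Y_d,[l]}$ and $\overline{M}_{0,n}(Y_d,[l])$ with the Fano scheme of lines, use the normal-bundle classification to see both spaces are smooth of dimension $5-d$ with the $\DT$-side obstruction bundle of rank $4-d$, identify that bundle with $\bigl(\dR^1\phi_*\,\mathrm{ev}^*K_{Y_d}\bigr)^{\vee}$, and let the sign $(-1)^{4-d}=(-1)^{c_1(Y_d)\cdot[l]-1}$ from $e(E^{\vee})=(-1)^{\rk E}e(E)$ cancel the sign in the definition of $\DT_3^{\mathrm{twist}}$. The ``main technical obstacle'' you flag---globalising the fiberwise Serre duality over $F(Y_d)$---is exactly what the paper carries out via Grothendieck--Verdier duality applied to $\dR\phi_*\,\mathrm{ev}^*K_{Y_d}$ together with a base-change diagram, and note also that for $d=3,4$ the case of two insertions $\gamma_1,\gamma_2\in H^3(Y_d,\mathbb{Z})$ is nonvacuous and requires the separate fibered-power/intersection argument the paper gives at the end of its proof.
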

\begin{proof}
From the deformation invariance of $\DT_3$ and GW invariants, we may assume $Y_d$ to be a general hypersurface.
By \cite[Thm. 4.3, pp. 266]{Kollar}, the Hilbert scheme $\Hilb^{t+1}(Y_d)$ of lines on $Y_d$ is connected and smooth of dimension $(5-d)$.
For any line $l\subseteq Y_d$, we have 
\begin{equation}N_{l/Y_d}\cong\oO_{\mathbb{P}^1}(a_1)\oplus \oO_{\mathbb{P}^1}(a_2). \nonumber \end{equation}
By \cite[Ex. 4.4, 4.5, pp. 269]{Kollar}, we know $(2-d)\leqslant a_i\leqslant 1$ and $a_i=0$ or $-1$ if $d=4$. Hence
\begin{equation}N_{l/Y_d}\cong\oO_{\mathbb{P}^1}^{\oplus (d-1)}\oplus \oO_{\mathbb{P}^1}(1)^{\oplus(3-d)}, \textrm{ }\textrm{if}\textrm{ }d=1,2, 
\nonumber \end{equation} 
\begin{equation}N_{l/Y_d}\cong\oO_{\mathbb{P}^1}(-1)\oplus \oO_{\mathbb{P}^1}(1)\,\textrm{ }\textrm{or}\textrm{ }\,
N_{l/Y_d}\cong\oO_{\mathbb{P}^1}\oplus \oO_{\mathbb{P}^1},\, \textrm{ }\textrm{if}\textrm{ }d=3, 
\nonumber \end{equation} 
\begin{equation}N_{l/Y_d}\cong\oO_{\mathbb{P}^1}\oplus \oO_{\mathbb{P}^1}(-1), \textrm{ }\textrm{if}\textrm{ }d=4. 
\nonumber \end{equation} 
Any one dimensional stable sheaf supported on a line is its structure sheaf and the moduli space $M_{l}$ of such one dimensional sheaves is isomorphic to the Hilbert scheme $\Hilb^{t+1}(Y_d)$. 

By the possibility of the normal bundle of $l\subseteq Y_d$, the deformation and obstruction spaces of $M_l$ satisfies 
\begin{equation}\Ext^1(\oO_l,\oO_l)\cong H^{0}(l,N_{l/Y_d})\cong \mathbb{C}^{5-d}, \nonumber \end{equation}
\begin{equation}\label{equ of obs}\Ext^2(\oO_l,\oO_l)\cong H^{0}(l,\wedge^2N_{l/Y_d})\cong H^{1}(l,K_{Y_d}|_l)^*\cong \mathbb{C}^{4-d}, 
\end{equation}
so its virtual class is the Euler class of the obstruction bundle.

The deformation and obstruction spaces of $\overline{\mathcal{M}}_{0, 0}(Y_d, [l])$ satisfies 
\begin{equation}H^{0}(l,N_{l/Y_d})\cong \mathbb{C}^{5-d}, \quad H^{1}(l,N_{l/Y_d})=0. \nonumber \end{equation}
So the virtual class is its usual fundamental class.

To relate them, note that we have an isomorphism 
\begin{equation}\overline{\mathcal{M}}_{0, 0}(Y_d, [l])\to M_l, \nonumber \end{equation}
\begin{equation}(f:C\to Y_d)\mapsto \oO_{f(C)},  \nonumber \end{equation}
a forgetful map
\begin{equation}\phi:\overline{\mathcal{M}}_{0, 1}(Y_d, [l]) \rightarrow \overline{\mathcal{M}}_{0, 0}(Y_d, [l])\cong M_l,  \nonumber \end{equation}
\begin{equation}\phi:(f:C\rightarrow Y_d, \textrm{ }p\in C)\mapsto (f:C\rightarrow Y_d)  \mapsto  \oO_{f(C)},  \nonumber \end{equation}
and an embedding 
\begin{equation}i=(\phi,\ev): \overline{\mathcal{M}}_{0, 1}(Y_d, [l]) \hookrightarrow  M_l\times Y_d,  \nonumber \end{equation} 
\begin{equation}i(f:C\rightarrow Y_d, \textrm{ }p\in C)=(\oO_{f(C)},f(P)) , \nonumber \end{equation}
whose image is the universal curve $\mathcal{C}\subseteq M_l\times Y_d$, where we use identification $M_l\cong\Hilb^{t+1}(Y_d)$. 

Since $\mathcal{C}$ and $M_l\times Y_d$ are smooth,
\begin{equation}(\dR\pi_M)_*\dR \mathcal{H}om(\oO_{\mathcal{C}},\oO_{\mathcal{C}})\cong 
(\dR\pi_M)_*\, i_*\big(\bigwedge^* \mathcal{N}_{\mathcal{C}/(M_l\times Y_d)}\big),  \nonumber \end{equation}
where $\pi_M:M_l\times Y_d \to M_l$ is the natural projection.

By (\ref{equ of obs}), the obstruction bundle $\mathrm{Ob}_{M_l}$ of $M_l$ is 
\begin{equation}(\pi_M)_*\,i_*\big(\bigwedge^2 \mathcal{N}_{\mathcal{C}/(M_l\times Y_d)}\big)\cong
(\pi_M)_*\,\big(i_*(K_{\mathcal{C}})\otimes K^{-1}_{M_l\times Y_d}\big), \nonumber \end{equation}
by the tangent-normal bundle exact sequence and the projection formula.

We claim there is an isomorphism of vector bundles:
\begin{equation}(\dR^1\phi_* \mathrm{ev}^*K_{Y_d})^*\cong 
(\pi_M)_*\,\big(i_*(K_{\overline{\mathcal{M}}_{0, 1}(Y_d, [l])})\otimes K^{-1}_{M_l\times Y_d}\big). 
\nonumber \end{equation}
In fact, by the Grothendieck-Verdier duality (ref. \cite[Thm. 3.34, pp. 86]{Huy}), we have 
\begin{eqnarray*}
\dR \mathcal{H}om (\dR\phi_*\, \mathrm{ev}^*K_{Y_d}, \oO_{M_l})
&\cong& \dR\mathcal{H}om \big(\dR(\pi_{M})_*i_*i^*\pi_Y^*K_{Y_d}, \oO_{M_l}\big) \\
&\cong& \dR\mathcal{H}om\Big(\dR(\pi_{M})_*\big(\pi_Y^*K_{Y_d}\otimes \oO_{\mathcal{C}}\big), \oO_{M_l}\Big) \\
&\cong& \dR(\pi_{M})_*\dR\mathcal{H}om\Big(\pi_Y^*K_{Y_d}\otimes \oO_{\mathcal{C}},\pi_Y^*K_{Y_d}[3]\Big) \\
&\cong& \dR(\pi_{M})_*\dR\mathcal{H}om\big(\oO_{\mathcal{C}},\oO_{M_l\times Y_d}\big)[3] \\
&\cong& \dR(\pi_{M})_*\big(i_*K_{\overline{\mathcal{M}}_{0, 1}(Y_d, [l])}\otimes K^{-1}_{M_l\times Y_d}\big)[1],
\end{eqnarray*}
where the last isomorphism is by (ref. \cite[Cor. 3.40, pp. 89]{Huy}) and $\pi_Y:M_l\times Y_d \to Y_d$ denotes the natural projection.
Thus, we have an isomorphism 
\begin{equation}\label{equa1}\mathrm{Ob}_{M_l} \cong \big(\dR^1\phi_*\, \mathrm{ev}^*K_{Y_d}\big)^* \end{equation}
of rank $(4-d)$ vector bundles. As in \cite[pp. 182]{CK}, we have a commutative diagram
\begin{equation}
\xymatrix{\ar @{} [dr] |{} \overline{\mathcal{M}}_{0, 2}(Y_d, [l])\ar@/^2pc/[rr]^{\mathrm{ev}_2}
\ar[d]^{\pi_2} \ar[r]^{\phi_2} & \overline{\mathcal{M}}_{0, 1}(Y_d, [l]) \ar[d]^{\pi_1} \ar[r]^{\quad\quad  \mathrm{ev}_1}
& Y_d \\ \overline{\mathcal{M}}_{0, 1}(Y_d, [l])\ar[r]^{\phi_1}
& \overline{\mathcal{M}}_{0, 0}(Y_d, [l]), }
\nonumber \end{equation}
where $\phi_i$ forgets the first marked point and $\pi_i$ forgets the last marked point. Then base change gives an isomorphism  
\begin{equation}\label{equa2}(\pi_2)_*\,\mathrm{ev}_2^*\,K_{Y_d}\cong \phi_1^*\,(\pi_1)_*\,\mathrm{ev}^*_1\, K_{Y_d}. \end{equation}
By the definition, for $\gamma\in H^4(Y_d)$, we have 
\begin{eqnarray*}
\DT_3^{\mathrm{twist}}(Y_d)(l\textrm{ }|\textrm{ }\gamma)&=&(-1)^{4-d}
\cdot\int_{[M_{l}]^{\rm{vir}}}(\pi_M)_*\big(\mathrm{PD}[\mathcal{C}]\cup\pi^*_Y\gamma\big) \\
&=& (-1)^{4-d}\cdot\int_{[M_{l}\times Y_d]}\mathrm{PD}[\mathcal{C}]\cup\pi^*_Y\gamma\cup\pi^*_Me(\mathrm{Ob}_{M_l}) \\
&=& (-1)^{4-d}\cdot\int_{[\overline{\mathcal{M}}_{0, 1}(Y_d, [l])]}i^*\big(\pi^*_Y\gamma\cup\pi^*_Me(\mathrm{Ob}_{M_l})\big) \\
&=& (-1)^{4-d}\cdot\int_{[\overline{\mathcal{M}}_{0, 1}(Y_d, [l])]}\mathrm{ev}^*\gamma\cup e(\phi^*\mathrm{Ob}_{M_l}) \\
&=& \int_{[\overline{\mathcal{M}}_{0, 1}(Y_d, [l])]}\mathrm{ev}^*\gamma\cup e(\phi^*\dR^1\phi_*\, \mathrm{ev}^*K_{Y_d}) \\
&=& \int_{[\overline{\mathcal{M}}_{0, 1}(Y_d, [l])]}\mathrm{ev}^*\gamma\cup e\big(\dR^1(\pi_2)_*\,\mathrm{ev}_2^*\,K_{Y_d}\big) \\
&=& \mathrm{GW}^{\mathrm{twist}}_{0, l}(Y_d)(\gamma),
\end{eqnarray*}
where the second and third to last equality is by (\ref{equa1}), (\ref{equa2}). \\

As for insertions $\gamma_1,\gamma_2\in H^3(Y,\mathbb{Z})$, we consider the case when $d=4$ for simplicity (in this case, the obstruction bundle 
of $M_l$ is zero), other cases can be proved in a similar way.

Let $Y=Y_4$, by considering the commutative diagram of embeddings 
\begin{equation}
\xymatrix{\ar @{} [dr] |{} \overline{\mathcal{M}}_{0, 2}(Y, [l]) 
\ar[d]_{i_2=(\phi_2,\,\mathrm{ev}_2)} \ar[r]^{i_1=(\phi_1,\,\mathrm{ev}_1)\quad} \ar[dr]^{i} & \overline{\mathcal{M}}_{0, 1}(Y, [l]) 
\ar[d]^{i_4:\,(f,\,y)\mapsto \big(\phi(f),\,y,\,\mathrm{ev}(f)\big)}\times Y 
\\ \overline{\mathcal{M}}_{0, 1}(Y, [l])\times Y\ar[r]_{i_3=(\phi,\,\mathrm{ev})\times \mathrm{Id}_{Y}\,}
& \overline{\mathcal{M}}_{0, 0}(Y, [l])\times Y\times Y, }
\nonumber \end{equation}
where $\phi_i$ forgets the $i$-th marked point and $\mathrm{ev}_i$ is the $i$-th evaluation map, we get
\begin{equation}\mathrm{ev}_1^*\gamma_1\cup\mathrm{ev}_2^*\gamma_2=i^*(1\cup\gamma_1\cup\gamma_2). \nonumber \end{equation}
\begin{eqnarray*}
\mathrm{GW}^{\mathrm{twist}}_{0, l}(Y)(\gamma_1,\gamma_2)
&=&\int_{[\overline{\mathcal{M}}_{0, 2}(Y, [l])]}\mathrm{ev}_1^*\gamma_1\cup \mathrm{ev}_1^*\gamma_2 \\
&=&\int_{[\overline{\mathcal{M}}_{0, 2}(Y, [l])]}i^*(1\cup\gamma_1\cup\gamma_2) \\
&=&\int_{[\overline{\mathcal{M}}_{0, 0}(Y, [l])\times Y\times Y]}(1\cup\gamma_1\cup\gamma_2)\cup \mathrm{PD}\Big([i(\overline{\mathcal{M}}_{0, 2}(Y, [l]))]\Big) \\
&=&\int_{[\overline{\mathcal{M}}_{0, 0}(Y, [l])\times Y\times Y]}(1\cup\gamma_1\cup\gamma_2)\cup \mathrm{PD}\Big([i_3\big(\overline{\mathcal{M}}_{0, 1}(Y, [l])\times Y\big)]\Big)\\
&& \quad\cup\, \mathrm{PD}\Big([i_4\big(\overline{\mathcal{M}}_{0, 1}(Y, [l])\times Y\big)]\Big) \\
&=&\int_{[\overline{\mathcal{M}}_{0, 0}(Y, [l])]}(\pi_M)_*\big(\mathrm{PD}[\mathcal{C}]\cup\pi^*_Y\gamma_1\big)\cup(\pi_M)_*\big(\mathrm{PD}[\mathcal{C}]\cup\pi^*_Y\gamma_2\big) \\
&=& \DT_3^{\mathrm{twist}}(Y)(\beta\textrm{ }|\textrm{ }\gamma_1,\gamma_2),
\end{eqnarray*}
where we use $i\big(\overline{\mathcal{M}}_{0, 2}(Y, [l])\big)=i_3\big(\overline{\mathcal{M}}_{0, 1}(Y, [l])\times Y\big)\cap 
i_4\big(\overline{\mathcal{M}}_{0, 1}(Y, [l])\times Y\big)$.
\end{proof}
\begin{rmk}
If $Y$ is a smooth intersection of $Gr(2,5)$, a quadric and a $\mathbb{P}^7$ in $\mathbb{P}^9$, then $Y$ is a Fano 3-fold.
For general choice of $Y$, the Hilbert scheme of lines on it is a connected smooth projective curve (ref. \cite[Prop. 5.1]{Debarre}). 
One can easily check Proposition \ref{lines on Fano} also holds for $Y$. 
\end{rmk}

\subsection{Multiple fiber classes for $\mathbb{P}^1$-bundles}

\begin{prop}\label{fiber classes}
Let $S$ be a del-Pezzo surface and $Y=S\times\mathbb{P}^1$ be the product. Then Conjecture \ref{conj:GW/DT_3} is true
for $\beta=n\cdot[\mathbb{P}^1]$ with $n\geqslant1$.
\end{prop}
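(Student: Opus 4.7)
The plan is to exploit the product structure $Y = S \times \mathbb{P}^{1}$ to factor both sides of the conjecture into a topological integral on $S$ times a local multiple-cover integral on the fiber $\mathbb{P}^{1}$, and then match the two local factors.

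First I would dispose of the insertions. Since $S$ is a del-Pezzo surface, $H^{1}(S) = H^{3}(S) = 0$, so by K\"unneth $H^{3}(Y) = 0$, and by Remark \ref{constrain on insert} it suffices to consider a single insertion $\gamma \in H^{4}(Y)$. Writing $\gamma = \gamma_{0} + \gamma_{2} \boxtimes [\mathrm{pt}_{\mathbb{P}^{1}}]$ with $\gamma_{0} \in H^{4}(S)$ and $\gamma_{2} \in H^{2}(S)$, I would treat the two pieces separately. A connectedness/stability argument (using the vanishing of $\mathcal{E}xt^{i}_{Y}$ between sheaves with disjoint supports) shows that every one-dimensional stable sheaf $E$ with $[E] = n[\mathbb{P}^{1}]$ has scheme-theoretic support in a single fiber $\{s\} \times \mathbb{P}^{1}$; on the GW side, $(\pi_{S})_{*}(n[\mathbb{P}^{1}]) = 0$ forces each stable map to factor through a single fiber. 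This gives the product and fibration descriptions
\[
\overline{M}_{0,1}(Y, n[\mathbb{P}^{1}]) \cong S \times \overline{M}_{0,1}(\mathbb{P}^{1}, n), \qquad \pi \colon M_{Y, n[\mathbb{P}^{1}]} \to S,
\]
where the fibers of $\pi$ are isomorphic to a local moduli $M^{\mathrm{loc}}_{n}$ of stable one-dimensional sheaves on $\mathbb{P}^{1} \times \mathbb{C}^{2}$ with the prescribed invariants.

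Next I would analyze the twisting data. On the GW side, K\"unneth and flat base change give $-\mathbf{R}\pi_{*} f^{*} K_{Y} = K_{S} \boxtimes V$, where $V := -\mathbf{R}\pi'_{*} g^{*} K_{\mathbb{P}^{1}}$ is a rank $(2n - 1)$ bundle, and the splitting principle yields $e(K_{S} \boxtimes V) = \sum_{j=0}^{2n-1} K_{S}^{2n-1-j} \cdot c_{j}(V)$. On the DT side, in the $n = 1$ case $M = S$ and the fiberwise obstruction $H^{0}(\mathbb{P}^{1}, \wedge^{2} N_{\{s\} \times \mathbb{P}^{1}/Y}) = \det T_{s}S$ assembles into the line bundle $-K_{S}$, so $[M]^{\mathrm{vir}} = -K_{S} \cap [S]$; combined with the sign $(-1)^{c_{1}(Y) \cdot \beta - 1} = -1$, this verifies $\DT_{3}^{\mathrm{twist}}(Y)([\mathbb{P}^{1}] \mid \gamma) = K_{S} \cdot \gamma_{2}$, matching the GW value. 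For general $n$, I expect the base contribution to $[M]^{\mathrm{vir}}$ along $\pi$ to retain the same first Chern class $-K_{S}$. For $\gamma_{0} \in H^{4}(S)$, the only surviving GW term is $c_{2n-1}(V) = \pi_{\mathrm{forget}}^{*} c_{2n-1}(V_{0})$, which vanishes because $V_{0}$ has rank $2n - 1$ on $\overline{M}_{0,0}(\mathbb{P}^{1}, n)$ of complex dimension $2n - 2$; a parallel argument on the DT side gives zero as well.

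For $\gamma = \gamma_{2} \boxtimes [\mathrm{pt}]$, only the $K_{S} \cdot c_{2n-2}(V)$ term of the Euler class survives, and the push-forward identity $(\pi_{\mathrm{forget}})_{*} \mathrm{ev}'^{*}[\mathrm{pt}] = n$ reduces the GW side to
\[
\mathrm{GW}^{\mathrm{twist}}_{0, n[\mathbb{P}^{1}]}(Y)(\gamma) = (K_{S} \cdot \gamma_{2}) \cdot n \cdot \int_{\overline{M}_{0,0}(\mathbb{P}^{1}, n)} c_{2n-2}(V_{0}).
\]
A parallel computation on the DT side, using $\mathrm{ch}_{2}$ of the universal sheaf and the fibration $\pi$, expresses $\tau(\gamma_{2} \boxtimes [\mathrm{pt}])$ as $n \cdot \pi^{*} \gamma_{2}$ plus fiber-direction contributions, and gives $\DT_{3}^{\mathrm{twist}}(Y)(n[\mathbb{P}^{1}] \mid \gamma) = (K_{S} \cdot \gamma_{2}) \cdot D_{n}$ for a local DT factor $D_{n}$. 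The conjecture then reduces to the local multiple-cover identity
\[
n \cdot \int_{\overline{M}_{0,0}(\mathbb{P}^{1}, n)} c_{2n-2}(V_{0}) = \sum_{k \mid n} \frac{D_{n/k}}{k^{2}},
\]
which I would verify by $\mathbb{C}^{\times}$-localization on the fiber $\mathbb{P}^{1}$, matching equivariant contributions from fixed loci given by the standard $z \mapsto z^{n/k}$ covers and their sheaf-theoretic counterparts. The main obstacle is expected to be the DT-side analysis for $n \geq 2$: giving a sufficiently explicit description of $M_{Y, n[\mathbb{P}^{1}]}$ as a fibration over $S$, with virtual class factoring as base times fiber, to make the factorization $(K_{S} \cdot \gamma_{2}) \cdot D_{n}$ transparent.
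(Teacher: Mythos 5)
Your overall strategy (factor everything along the product $Y=S\times\mathbb{P}^1$ and reduce to a local computation on the fiber) is the right one, and your treatment of the insertions and of the GW side is essentially sound: the identification $\overline{M}_{0,1}(Y,n[\mathbb{P}^1])\cong \overline{M}_{0,1}(\mathbb{P}^1,n)\times S$, the splitting of $e(-\dR\pi_*f^*K_Y)$ into $c_1(K_S)\cdot c_{2n-2}$ and $c_{2n-1}$ pieces, and the vanishing of the top piece (your dimension argument via pullback from $\overline{M}_{0,0}(\mathbb{P}^1,n)$ is a legitimate alternative to the paper's cosection argument) all match the paper. But there is a genuine gap on the DT side, and it is exactly the point you flag as ``the main obstacle'': you posit a fibration $\pi\colon M_{Y,n[\mathbb{P}^1]}\to S$ with fiber a local moduli space $M^{\mathrm{loc}}_n$ and an undetermined local factor $D_n$, to be matched against $n\int_{\overline{M}_{0,0}(\mathbb{P}^1,n)}c_{2n-2}(V_0)$ by localization. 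The paper's proof shows there is nothing to localize: by the stability argument (\cite[Lemma 2.2]{CMT}), any stable sheaf in class $n[\mathbb{P}^1]$ is scheme-theoretically supported on a single \emph{reduced} fiber $\mathbb{P}^1\times\{p\}$ (the normal bundle is trivial, so a thickened support would destabilize), hence is a stable bundle on $\mathbb{P}^1$ of rank $n$ with $\chi=1$; since stable bundles on $\mathbb{P}^1$ are line bundles, $M_{Y,n[\mathbb{P}^1]}=\emptyset$ for all $n\geqslant 2$ and $M_{Y,[\mathbb{P}^1]}\cong S$ with obstruction bundle $\wedge^2 TS$. Thus $D_n=0$ for $n\geqslant 2$, the multiple-cover sum collapses to the single term $k=n$, and the whole conjecture reduces to the one identity $\mathrm{GW}^{\mathrm{twist}}_{0,n[\mathbb{P}^1]}(Y)(\gamma)=\tfrac{1}{n^2}\DT_3^{\mathrm{twist}}(Y)([\mathbb{P}^1]\mid\gamma)$.

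Even granting that simplification, your argument still leaves the GW-side local integral $\int_{\overline{M}_{0,0}(\mathbb{P}^1,n)}c_{2n-2}(V_0)$ unevaluated; the proof needs the value $1/n^3$. The paper obtains it by recognizing $c_{2n-2}(-\dR\pi_*f^*K_{\mathbb{P}^1})$ as the Euler class of the \emph{reduced} obstruction bundle for stable maps to the holomorphic symplectic surface $K_{\mathbb{P}^1}$ (via a surjective cosection, \cite{KL2}), whose degree is the Aspinwall--Morrison-type multiple-cover contribution $1/n^3$; together with the factor $n$ from the divisor equation this gives $1/n^2$, matching the single surviving DT term. Without pinning down both local numbers ($1/n^3$ on the GW side and $0$ on the DT side for $n\geqslant 2$), your reduction to a ``local multiple-cover identity'' is a restatement of the problem rather than a proof of it.
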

\begin{proof}
By a stability argument \cite[Lemma 2.2]{CMT}, any one dimensional stable sheaf $E\in M_{Y,\beta}$ is scheme theoretically supported
on $\iota:\mathbb{P}^1\times \{p\}\hookrightarrow Y$ for some $p\in S$, hence $E=\iota_*\oO_{\mathbb{P}^1}$ 
by the classification of stable vector bundle on $\mathbb{P}^1$. So we have
\begin{equation}M_{Y,\beta}\cong S,\,\textrm{ }\textrm{if}\textrm{ }\,n=1; \quad M_{Y,\beta}=\emptyset,\,\textrm{ }\textrm{if}\textrm{ }n>1. 
\nonumber \end{equation}
We fix $n=1$. Direct calculations give isomorphisms
\begin{equation}\Ext^1(\iota_*\oO_{\mathbb{P}^1},\iota_*\oO_{\mathbb{P}^1})\cong H^0(\mathbb{P}^1,\oO^{\oplus 2}_{\mathbb{P}^1})
\cong \mathbb{C}^2, 
\nonumber \end{equation}
\begin{equation}\Ext^2(\iota_*\oO_{\mathbb{P}^1},\iota_*\oO_{\mathbb{P}^1})\cong H^0(\mathbb{P}^1,\oO_{\mathbb{P}^1}) 
\cong \mathbb{C}. 
\nonumber \end{equation}
So local Kuranishi maps of $M_{Y,\beta}$ are zero and its virtual class is the Euler class of the corresponding obstruction bundle,
which can be determined as follows.

Let $\Delta_{S\times S}\hookrightarrow S\times S$ be the diagonal and denote the closed subscheme 
\begin{equation}i:\mathcal{Z}:=\Delta_{S\times S}\times \mathbb{P}^1 \hookrightarrow S\times S\times\mathbb{P}^1\cong M_{Y,\beta}\times Y,
\nonumber \end{equation} 
in the product $M_{Y,\beta}\times Y$. Then the universal sheaf of $M_{Y,\beta}$ is the structure sheaf 
$\oO_{\mathcal{Z}}$ which satisfies 
\begin{equation}\dR \mathcal{H}om(\oO_{\mathcal{Z}},\oO_{\mathcal{Z}})\cong i_*\big(\bigwedge^*\pi^*TS\big), \nonumber \end{equation}
where $\pi:S\times \mathbb{P}^1\to S$ is the projection and we identify $\Delta_{S\times S}\cong S$. 
Under the isomorphism $M_{Y,\beta}\cong S$, the obstruction bundle $\rm{Ob}$ of $M_{Y,\beta}$ satisfies
\begin{equation}\mathrm{Ob}\cong \bigwedge^2 TS. \nonumber \end{equation}
As for insertions, by Remark \ref{constrain on insert}, we only need to consider 
\begin{equation}\gamma\in H^4(Y,\mathbb{Z})\cong H^4(S,\mathbb{Z})\oplus H^2(\mathbb{P}^1,\mathbb{Z})\otimes H^2(S,\mathbb{Z}), 
\nonumber \end{equation}
since $H^3(Y,\mathbb{Z})=0$. Then
\begin{align*}
\tau(\gamma)=\pi_{M\ast}(\pi_Y^{\ast}\gamma \cup[\mathcal{Z}])=0, \textrm{ }\textrm{if}\textrm{ }\gamma\in H^4(S,\mathbb{Z}),
\end{align*}
\begin{align*}
\tau(\gamma)=d_1\cdot\gamma_2,\, \textrm{ }\textrm{if}\textrm{ }\gamma=d_1\otimes \gamma_2 \in H^2(\mathbb{P}^1,\mathbb{Z})\otimes H^2(S,\mathbb{Z}),
\end{align*}
where $\pi_M$, $\pi_Y$ are projections from $M_{Y,\beta}\times Y$ to corresponding factors.
Hence 
\begin{equation}\DT_3^{\mathrm{twist}}(Y)(\beta\textrm{ }|\textrm{ }\gamma)=0, 
\textrm{ }\textrm{if}\textrm{ }\gamma\in H^4(S,\mathbb{Z}), \nonumber \end{equation}
\begin{equation}\DT_3^{\mathrm{twist}}(Y)(\beta\textrm{ }|\textrm{ }d_1\otimes \gamma_2)=
d_1\int_S e\Big(\bigwedge^2 TS\Big)\cup \gamma_2,\,\textrm{ }\textrm{if}\textrm{ } \gamma_2\in H^2(S,\mathbb{Z}). \nonumber \end{equation}
As for GW invariants, let $\beta=n\cdot[\mathbb{P}^1]$, we have 
\begin{equation}\overline{M}_{0,1}(Y,\beta)\cong \overline{M}_{0,1}(\mathbb{P}^1,n)\times S. \nonumber \end{equation}
Given a map $t:\mathbb{P}^1\to Y=\mathbb{P}^1\times S$ with $t_{*}[\mathbb{P}^1]=n\,[\mathbb{P}^1]$, we have
\begin{equation}H^*(\mathbb{P}^1,t^*TY)\cong H^*(\mathbb{P}^1,t^*T\mathbb{P}^1)\oplus H^*(\mathbb{P}^1,c^*TS), \nonumber \end{equation}
\begin{equation}H^*(\mathbb{P}^1,t^*K_Y)\cong H^*(\mathbb{P}^1,t^*K_{\mathbb{P}^1}\otimes c^*K_S), \nonumber \end{equation}
where $c$ is a constant map to some point of $S$. Hence 
\begin{equation}[\overline{M}_{0,1}(Y,\beta)]^{\mathrm{vir}}=[\overline{M}_{0,1}(\mathbb{P}^1,n)]^{\mathrm{vir}}\otimes [S]\in 
A_{2n+1}(\overline{M}_{0,1}(Y,\beta)), \nonumber \end{equation}
and for the universal curve $\pi: \mathcal{C}\to \overline{M}_{0, 1}(Y, \beta)$ and map $f:\cC\to Y$,
\begin{eqnarray*}
e(-\dR\pi_{*}f^{*}K_Y)&=&c_{2n-1}(-\dR\pi_{*}f^{*}K_Y)\\ 
&=& c_{2n-2}(-\dR\pi_{*}f^{*}K_{\mathbb{P}^1})\cup e(K_S)+c_{2n-1}(-\dR\pi_{*}f^{*}K_{\mathbb{P}^1}) \\
&=& c_{2n-2}(-\dR\pi_{*}f^{*}K_{\mathbb{P}^1})\cup e(K_S),
\end{eqnarray*}
where $c_{2n-1}(-\dR\pi_{*}f^{*}K_{\mathbb{P}^1})$ is the Euler class of the obstruction bundle for stable maps to $K_{\mathbb{P}^1}$,
which is zero as $K_{\mathbb{P}^1}$ is holomorphic symplectic. In fact, one can easily construct a surjective cosection 
\begin{equation}\sigma: -\dR\pi_{*}f^{*}K_{\mathbb{P}^1}\to \oO_{\overline{M}_{0, 1}(Y, \beta)}. \nonumber \end{equation}
And $c_{2n-2}(-\dR\pi_{*}f^{*}K_{\mathbb{P}^1})=e(\mathrm{Ker}(\sigma))$ is the Euler class of the reduced obstruction bundle \cite{KL2}.

So for $d_1\otimes \gamma_2 \in H^2(\mathbb{P}^1,\mathbb{Z})\otimes H^2(S,\mathbb{Z})$, we have 
\begin{eqnarray*}\mathrm{GW}^{\mathrm{twist}}_{0, \beta}(Y)(d_1\otimes \gamma_2)
&=&d_1\int_{[\overline{M}_{0,1}(\mathbb{P}^1,n)]^{\mathrm{vir}}}c_{2n-2}(-\dR\pi_{*}f^{*}K_{\mathbb{P}^1})\cup\mathrm{ev}^*([\mathrm{pt}]) 
\cdot \int_S c_1(K_S)\cup \gamma_2 \\
&=&d_1\int_{[\overline{M}_{0,1}(K_{\mathbb{P}^1},n)]_{\rm{red}}^{\mathrm{vir}}}\mathrm{ev}^*([\mathrm{pt}]) \cdot\int_S c_1(K_S)\cup \gamma_2 \\
&=&d_1\cdot n\cdot\mathrm{deg}\big([\overline{M}_{0,0}(K_{\mathbb{P}^1},n)]_{\rm{red}}^{\mathrm{vir}}\big)\cdot\int_S c_1(K_S)\cup\gamma_2 \\
&=&d_1\cdot\frac{1}{n^2}\cdot\int_S c_1(K_S)\cup \gamma_2 \\
&=&-\, d_1\cdot\frac{1}{n^2}\cdot \int_S e\Big(\bigwedge^2 TS\Big)\cup \gamma_2.
\end{eqnarray*}
As for $\gamma\in H^4(S,\mathbb{Z})$, we obviously have vanishing $\mathrm{GW}^{\mathrm{twist}}_{0, \beta}(Y)(\gamma)=0$.
%\begin{equation}\mathrm{GW}^{\mathrm{twist}}_{0, \beta}(Y)(\gamma)=0. \nonumber \end{equation}
Thus 
\begin{equation}\mathrm{GW}^{\mathrm{twist}}_{0, n[\mathbb{P}^1]}(Y)(\gamma)=(1/n^2)\cdot\DT_3^{\mathrm{twist}}(Y)([\mathbb{P}^1]\textrm{ }|\textrm{ }\gamma)\nonumber \end{equation} 
holds for any $n\geqslant1$ and $\gamma\in H^4(Y,\mathbb{Z})$ as $(c_1(Y)\cdot\beta-1)=2n+1$ is always odd.
\end{proof}
\begin{rmk}We can also consider a $\mathbb{P}^1$-bundle $\pi: Y\to S$ over a del-Pezzo surface $S$ with a section. 
For multiple fiber classes, similar result as Proposition \ref{fiber classes} holds in this setting.
\end{rmk}

\subsection{Product of del-Pezzo surface with $\mathbb{P}^1$}
We consider $Y=S\times\mathbb{P}^1$ for a del-Pezzo surface $S$ with $\beta\in H_2(S,\mathbb{Z})\subseteq H_2(Y,\mathbb{Z})$.
\begin{prop}\label{compute invs for fano surface}
Let $S$ be a del-Pezzo surface and $Y=S\times\mathbb{P}^1$ be the product. 
For $\beta\in H_2(S,\mathbb{Z})\subseteq H_2(Y,\mathbb{Z})$, 
we have: \\
${}$ \\
(1) If $\gamma=(\gamma_1,d)\in H^2(S)\otimes H^2(\mathbb{P}^1)\subseteq H^4(Y)$, then 
\begin{equation}\DT_3^{\mathrm{twist}}(Y)(\beta\textrm{ }|\textrm{ }\gamma)
=d\,(\beta\cdot\gamma_1)\cdot\DT_3(K_S)(\beta), \nonumber \end{equation}
\begin{equation}\mathrm{GW}^{\mathrm{twist}}_{0, \beta}(Y)(\gamma)=d\,(\beta\cdot\gamma_1)\cdot \mathrm{GW}_{0,\beta}(K_S). 
\nonumber \end{equation}
(2) If $\gamma\in H^4(S)\subseteq H^4(Y)$, then
\begin{equation}\DT_3^{\mathrm{twist}}(Y)(\beta\textrm{ }|\textrm{ }\gamma)=\mathrm{GW}^{\mathrm{twist}}_{0, \beta}(Y)(\gamma)=0. \nonumber \end{equation}
\end{prop}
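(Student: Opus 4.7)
The plan is to reduce both invariants on $Y = S \times \mathbb{P}^1$ to computations on $M_{S,\beta}$, by exploiting the triviality of the normal bundle of $S \times \{p\}$ in $Y$ and the del-Pezzo vanishing $\Ext^2_S(F,F) = 0$.

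First I would apply the stability argument of \cite[Lemma 2.2]{CMT} to the trivial fibration $Y \to \mathbb{P}^1$: any stable one-dimensional sheaf $E$ with $[E] = \beta \in H_2(S)$ is scheme-theoretically supported on a single slice $\iota_p \colon S \times \{p\} \hookrightarrow Y$, yielding a scheme isomorphism $M_{Y,\beta} \cong M_{S,\beta} \times \mathbb{P}^1$ whose universal sheaf $\eE$ is the tensor product of the pullback of the universal sheaf $\widetilde{\eE}_S$ on $M_{S,\beta} \times S$ with the structure sheaf $\oO_{\Delta_{\mathbb{P}^1}}$ of the diagonal in $\mathbb{P}^1 \times \mathbb{P}^1$. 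The Koszul identity $\Ext^i_Y(\iota_*F, \iota_*F) \cong \Ext^i_S(F,F) \oplus \Ext^{i-1}_S(F,F)$ (using $N_{S\times\{p\}/Y} \cong \oO_S$) combined with $\Ext^2_S(F,F) = 0$ (from $\mu(F \otimes K_S) < \mu(F)$ on the del-Pezzo) identifies the obstruction bundle on $M_{Y,\beta}$ with $\pi_{M_S}^* T M_{S,\beta}$. Consequently $[M_{Y,\beta}]^{\mathrm{vir}} = \chi_{\mathrm{top}}(M_{S,\beta}) \cdot [\mathrm{pt}] \times [\mathbb{P}^1] \in H_2(M_{Y,\beta})$; on the CY3 side, Serre duality on $K_S$ identifies the obstruction bundle of $M_{K_S,\beta} \cong M_{S,\beta}$ with the cotangent bundle $\Omega_{M_{S,\beta}}$, yielding $\DT_3(K_S)(\beta) = (-1)^{d_S}\chi_{\mathrm{top}}(M_{S,\beta})$ with $d_S := \dim M_{S,\beta}$.

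For the insertions, $\ch_2(\eE) = [\widetilde{\cC}_S \times \Delta_{\mathbb{P}^1}]$ factors along the K\"unneth decomposition of $M_{Y,\beta} \times Y = M_{S,\beta} \times \mathbb{P}^1 \times S \times \mathbb{P}^1$. A direct pushforward computation gives $\tau(\gamma_1 \otimes d\,[\mathrm{pt}]) = d(\beta\cdot\gamma_1) \cdot (1 \boxtimes [\mathrm{pt}_{\mathbb{P}^1}])$ in $H^0(M_{S,\beta}) \otimes H^2(\mathbb{P}^1)$, while $\tau(\gamma) \in H^2(M_{S,\beta}) \otimes H^0(\mathbb{P}^1)$ when $\gamma \in H^4(S)$. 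Pairing with the virtual class yields $d(\beta\cdot\gamma_1)\chi_{\mathrm{top}}(M_{S,\beta})$ in case (1) and zero in case (2). Including the twist sign $(-1)^{c_1(Y)\cdot\beta - 1}$ and the parity identity $d_S \equiv c_1(S)\cdot\beta - 1 \pmod 2$ (which follows from the adjunction congruence $\beta^2 \equiv c_1(S)\cdot\beta \pmod 2$) then produces the stated DT identity.

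On the GW side, $\overline{M}_{0,n}(Y,\beta) \cong \overline{M}_{0,n}(S,\beta) \times \mathbb{P}^1$ with product virtual class, and the $\mathbb{P}^1$-component of any stable map is constant, so $f^*K_Y \cong f_S^*K_S$ and $e(-\dR\pi_*f^*K_Y)$ is pulled back from $\overline{M}_{0,n}(S,\beta)$. In case (1), integration over the $\mathbb{P}^1$-factor extracts $d$, the divisor equation on $S$ extracts $\beta \cdot \gamma_1$, and the residual integral is the twisted GW invariant on $S$ with no insertion, which equals $\mathrm{GW}_{0,\beta}(K_S)$ by the standard identification of $K_S$-twisted GW on $S$ with GW on the local CY 3-fold. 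In case (2), the insertion is $\mathbb{P}^1$-independent and integration over the $\mathbb{P}^1$-factor vanishes. The main delicacy will be the sign reconciliation on the DT side: verifying that the twist sign $(-1)^{c_1(Y)\cdot\beta - 1}$ built into $\DT_3^{\mathrm{twist}}(Y)$ exactly compensates the Serre-duality sign $(-1)^{d_S}$ intrinsic to $\DT_3(K_S)(\beta)$ via the adjunction parity.
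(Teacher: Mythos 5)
Your proposal follows essentially the same route as the paper's proof: the same identification $M_{Y,\beta}\cong M_{S,\beta}\times\mathbb{P}^1$ with obstruction bundle $TM_{S,\beta}$, the same universal-sheaf and insertion computation, the same product decomposition on the GW side, and the same parity reconciliation (you derive $\beta^2\equiv c_1(S)\cdot\beta\pmod 2$ from adjunction where the paper verifies it in an explicit basis for blow-ups of $\mathbb{P}^2$, but it is the same fact). The only caveat is that $e(-\dR\pi_*f^*K_Y)$ is not literally pulled back from $\overline{M}_{0,n}(S,\beta)$ --- it is the Euler class of $\big(\dR^1(\phi_S)_*\mathrm{ev}_S^*K_S\big)\boxtimes K_{\mathbb{P}^1}$, whose $c_{r-1}\boxtimes c_1(K_{\mathbb{P}^1})$ correction term must be seen to drop out (in case (1) it does because the point insertion on the $\mathbb{P}^1$-factor already saturates that direction), a point the paper handles by keeping the $K_{\mathbb{P}^1}$ twist explicit.
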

\begin{proof}
Similar to Proposition \ref{fiber classes}, any $E\in M_{Y,\beta}$ is of type $E=(\iota_p)_*F$ for some one dimensional stable sheaf $F$ on $S$,
where $\iota_p: S=S\times \{p\}\to Y$ is the natural inclusion. Then we have an isomorphism 
\begin{equation}M_{Y,\beta}\cong M_{S,\beta}\times \mathbb{P}^1, \nonumber \end{equation}
where $M_{S,\beta}$ is the moduli space of one dimensional stable sheaves $F$'s on $S$ with $[F]=\beta$ and $\chi(F)=1$.
For $E=(\iota_p)_*F$, we have isomorphisms
\begin{equation}\Ext^1_Y(E,E)\cong \Ext^1_S(F,F)\oplus \mathbb{C}, \nonumber \end{equation}
\begin{equation}\Ext^2_Y(E,E)\cong \Ext^1_S(F,F), \nonumber \end{equation}
as $\Ext^2_S(F,F)\cong \Hom_S(F,F\otimes K_S)^*=0$ by the stability of $F$. Then 
\begin{equation}[M_{Y,\beta}]^{\mathrm{vir}}=\chi(M_{S,\beta})\cdot [\mathbb{P}^1]\in H_2(M_{Y,\beta},\mathbb{Z}). \nonumber \end{equation}
The universal sheaf $\mathcal{E}_{M_Y}$ over $M_{Y,\beta}\times Y$ can be identified with
\begin{equation}\mathcal{E}_{M_Y}\cong\mathcal{E}_{M_S}\boxtimes \oO_{\Delta_{\mathbb{P}^1}}\in \mathrm{Coh}(M_S\times S\times\mathbb{P}^1\times\mathbb{P}^1 ),   \nonumber \end{equation}
where $\mathcal{E}_{M_S}$ is the universal sheaf over $M_{S,\beta}\times S$.

Then for $\gamma\in H^4(S)\subseteq H^4(Y)$, we have 
\begin{equation}\DT_3^{\mathrm{twist}}(Y)(\beta\textrm{ }|\textrm{ }\gamma)=0, \nonumber  \end{equation}
and for $\gamma=(\gamma_1,d)\in H^2(S)\otimes H^2(\mathbb{P}^1)\subseteq H^4(Y)$, we have 
%\begin{equation}\DT_3^{\mathrm{twist}}(Y)(\beta\textrm{ }|\textrm{ }\gamma)=(-1)^{c_1(S)\cdot\beta-1}
%\cdot\int_{[M_{Y,\beta}]^{\rm{vir}}}\tau(\gamma) \nonumber  \end{equation}
\begin{eqnarray*}
\DT_3^{\mathrm{twist}}(Y)(\beta\textrm{ }|\textrm{ }\gamma)
&=&(-1)^{c_1(S)\cdot\beta-1}\cdot\int_{[M_{Y,\beta}]^{\rm{vir}}}\tau(\gamma) \\
&=& (-1)^{c_1(S)\cdot\beta-1}d\cdot\chi(M_{S,\beta})\cdot(\pi_{M_S})_*([\mathcal{E}_{M_S}]\cup \pi^*_S\gamma_1)\\
&=& (-1)^{c_1(S)\cdot\beta-1}(\beta\cdot\gamma_1)\,\cdot d\cdot\chi(M_{S,\beta}),
\end{eqnarray*}
where $\pi_S$, $\pi_{M_S}$ are projections from $M_{S,\beta}\times S$ to corresponding factors.  

Meanwhile, the $\DT_3$ invariant for the moduli space $M_{K_S,\beta}$ of one dimensional stable sheaves $E$'s on $K_S$ with
$[E]=\beta$ and $\chi(E)=1$ satisfies 
\begin{equation}\DT_3(K_S)(\beta)=(-1)^{(1+\beta^2)}\cdot\chi(M_{S,\beta}), \nonumber \end{equation}
where $(1+\beta^2)$ is the dimension of the smooth moduli space $M_{S,\beta}$ (see e.g. \cite[Lemma 3.6]{CMT}).

We claim the parity of $\beta^2$ and $c_1(S)\cdot\beta$ are the same, i.e.
\begin{equation}\label{parity del-pezzo}(-1)^{(1+\beta^2)}=(-1)^{(c_1(S)\cdot\beta-1)}.  \end{equation} 
In fact, we may assume $S$ to be a general del-Pezzo surface and given by the blow-up of $r$ general points in $\mathbb{P}^2$ (the case of $\mathbb{P}^1\times \mathbb{P}^1$ is trivial).
Let $H$ be the pull-back of the hyperplane class and $\{E_i\}_{i=1}^r$ are exceptional divisors. Then 
\begin{equation}c_1(S)=3H-\sum_{i=1}^r E_i,  \quad \beta:=dH+\sum_{i=1}^r d_i\,E_i, \textrm{ }\textrm{for}\textrm{ } \textrm{some}\textrm{ } d,\,d_i\in\mathbb{Z}, \nonumber \end{equation}
\begin{equation}c_1(S)\cdot\beta=3d+\sum_{i=1}^r d_i, \quad \beta^2=d^2-\sum_{i=1}^r d_i. \nonumber \end{equation}
Hence for $\gamma=(\gamma_1,d)$,
\begin{equation}\DT_3^{\mathrm{twist}}(Y)(\beta\textrm{ }|\textrm{ }\gamma)=(\beta\cdot\gamma_1)\,\cdot d\cdot\DT_3(K_S)(\beta). 
\nonumber \end{equation}
${}$ \\
As for the corresponding GW theory, there is an isomorphism
\begin{equation}\overline{M}_{0,n}(Y,\beta)\cong \overline{M}_{0,n}(S,\beta)\times \mathbb{P}^1, \textrm{ }n\geqslant 0.  \nonumber \end{equation}
And for $t:\mathbb{P}^1\to Y=S\times \mathbb{P}^1$, there are isomorphisms
\begin{equation}H^*(\mathbb{P}^1,t^*TY)\cong H^*(\mathbb{P}^1,t^*TS)\oplus H^*(\mathbb{P}^1,c^*T\mathbb{P}^1), \nonumber \end{equation}
\begin{equation}H^*(\mathbb{P}^1,t^*K_Y)\cong H^*(\mathbb{P}^1,t^*K_{S}\otimes c^*K_{\mathbb{P}^1}), \nonumber \end{equation}
where $c$ is a constant map to some point of $\mathbb{P}^1$.

The virtual class and the extra obstruction bundle satisfies 
\begin{equation}[\overline{M}_{0,1}(Y,\beta)]^{\mathrm{vir}}=[\overline{M}_{0,1}(S,\beta)]^{\mathrm{vir}}\otimes [\mathbb{P}^1].  
\nonumber \end{equation}
For the universal curve $\pi:\mathcal{C}\to\overline{M}_{0, 1}(Y, \beta)$ and map $f:\mathcal{C}\to Y$, by (\ref{equa2}), we have 
\begin{equation}\dR\pi_{*}f^{*}K_Y\cong\phi^*\,\dR\phi_*\,\mathrm{ev}^*K_Y, \nonumber \end{equation}
where $\phi:\overline{M}_{0, 1}(Y, \beta)\to \overline{M}_{0, 0}(Y, \beta)$ is the forgetful map and 
$\mathrm{ev}:\overline{M}_{0, 1}(Y, \beta)\to Y$ is the evaluation map. Notice that 
\begin{equation}\phi=(\phi_S,\mathrm{id}): \overline{M}_{0, 1}(S, \beta)\times \mathbb{P}^1\to  
\overline{M}_{0, 0}(S, \beta)\times \mathbb{P}^1, \nonumber \end{equation}
\begin{equation}\mathrm{ev}=(\mathrm{ev_S},\mathrm{id}):\overline{M}_{0, 1}(S, \beta)\times \mathbb{P}^1\to S\times\mathbb{P}^1. 
\nonumber \end{equation}
Hence
\begin{eqnarray*}
\mathrm{GW}^{\mathrm{twist}}_{0, \beta}(Y)(\gamma)
&=& \int_{[\overline{M}_{0, 1}(Y, \beta)]^{\rm{vir}}}e(-\dR\pi_{*}f^{*}K_Y)\cup\mathrm{ev}^{\ast}(\gamma) \\
&=& \int_{[\overline{M}_{0, 1}(Y, \beta)]^{\rm{vir}}}\phi^*e(\dR^1\phi_*\,\mathrm{ev}^*K_Y)\cup\mathrm{ev}^{\ast}(\gamma)\\
&=& \int_{[\overline{M}_{0, 1}(S, \beta)]^{\rm{vir}}\otimes [\mathbb{P}^1]}e\big(\phi_S^*(\dR^1(\phi_S)_*\,\mathrm{ev_S}^*K_S)\boxtimes K_{\mathbb{P}^1}\big)\cup\mathrm{ev}^{\ast}(\gamma).
\end{eqnarray*}
If $\gamma\in H^4(S)\subseteq H^4(Y)$, the above integration is zero by degree reason. 

If $\gamma=(\gamma_1,d)\in H^2(S)\otimes H^2(\mathbb{P}^1)\subseteq H^4(Y)$, we have 
\begin{eqnarray*}
\mathrm{GW}^{\mathrm{twist}}_{0, \beta}(Y)(\gamma)
&=& \int_{[\overline{M}_{0, 1}(S, \beta)]^{\rm{vir}}\otimes [\mathbb{P}^1]}e\big(\phi_S^*(\dR^1(\phi_S)_*\,\mathrm{ev_S}^*K_S)\boxtimes K_{\mathbb{P}^1}\big)\cup\mathrm{ev}^{\ast}(\gamma) \\
&=& \int_{[\overline{M}_{0, 1}(S, \beta)]^{\rm{vir}}}e\big(\phi_S^*(\dR^1(\phi_S)_*\,\mathrm{ev_S}^*K_S)\big)\cup\mathrm{ev_S}^{\ast}(\gamma_1)
\cdot \int_{\mathbb{P}^1}d\,[\mathrm{pt}] \\
&=& d\,(\beta\cdot\gamma_1)\int_{[\overline{M}_{0, 0}(S, \beta)]^{\rm{vir}}}e\big(\dR^1(\phi_S)_*\,\mathrm{ev_S}^*K_S\big) \\
&=& d\,(\beta\cdot\gamma_1)\cdot \mathrm{GW}_{0,\beta}(K_S),
\end{eqnarray*}
where we use the divisor equation of GW invariants in the second to last equality.
\end{proof}
Thus twisted $\mathrm{GW}/\DT_3$ invariants of $S\times\mathbb{P}^1$ reduce to $\mathrm{GW}/\DT_3$ invariants of 
(non-compact) Calabi-Yau 3-fold $K_S$. We recall 
the following Katz's conjecture on Calabi-Yau 3-folds.
\begin{conj}\emph{(Katz \cite{Katz})}\label{katz conj}
Let $Y$ be a (compact) Calabi-Yau 3-fold and $\beta\in H_2(Y,\mathbb{Z})$. Then
\begin{equation}\mathrm{GW}_{0,\beta}(Y)=\sum_{k|\beta}\frac{1}{k^3}\DT_3(Y)(\beta/k). \nonumber \end{equation}
\end{conj}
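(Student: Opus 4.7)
The plan is to approach Katz's conjecture by a parallel localization of both sides onto contributions from families of embedded rational curves in $Y$, in the spirit of the Gopakumar-Vafa heuristic already employed in Section \ref{geo explain}. Since both $\mathrm{GW}_{0,\beta}(Y)$ and $\DT_3(Y)(\beta/k)$ are deformation invariant, the first step is to reduce to a sufficiently generic $Y$ in its deformation class so that every effective class $\alpha \leqslant \beta$ is represented by embedded smooth rational curves moving in compact smooth families $\{C_{\alpha,t}\}_{t \in T_\alpha}$ with prescribed normal bundles $\oO_{\mathbb{P}^1}(-1,-1)$ (for rigid classes) or $\oO_{\mathbb{P}^1} \oplus \oO_{\mathbb{P}^1}(-2)$, etc. (for classes moving in families). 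This is analogous to the ``good Fano 3-fold'' hypothesis of Section \ref{geo explain} but for CY 3-folds, and is itself a serious unresolved piece.

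Second, I would decompose both sides along these families. On the DT side, every $E \in M_{Y,\beta}$ is set-theoretically supported on a union of such curves, so one stratifies $M_{Y,\beta}$ by the underlying cycle $\sum m_i [C_{\alpha_i, t_i}]$ and computes the contribution of each stratum from the normal data of the family together with the obstruction theory of $E$ as a deformation of $\oO_C^{\oplus m}$ or $\oO_{mC}$ style sheaves. On the GW side, for each sub-class $\beta/k$ the moduli $\overline{M}_{0,0}(Y,\beta)$ contains a component of degree-$k$ multi-covers of the family $T_{\beta/k}$, and virtual localization expresses the contribution as an integral over $T_{\beta/k}$ of the Euler class of an obstruction bundle built from $R^1 \pi_\ast f^\ast N_{C/Y}$ pulled back from $\overline{M}_{0,0}(\mathbb{P}^1,k)$.

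Third, I would match local contributions. For a single smooth family $T_\alpha$ of $(-1,-1)$-curves of class $\alpha$, the expected DT contribution to $\DT_3(Y)(k\alpha)$ from sheaves supported on $T_\alpha$ should be $(-1)^{\dim T_\alpha}\chi(T_\alpha)$ (and more generally an appropriate signed Euler characteristic), while the GW contribution from the $k$-fold cover stratum factors as this Euler characteristic times the Faber-Pandharipande multiple cover integral $\int_{[\overline{M}_{0,0}(\oO_{\mathbb{P}^1}(-1,-1),k)]^{\rm vir}} 1 = 1/k^3$. Summing over all families $T_\alpha$ and all multiplicities with $k\alpha$ contributing to $\beta$ then produces precisely the identity
\begin{equation*}
\mathrm{GW}_{0,\beta}(Y)=\sum_{k\mid\beta}\frac{1}{k^3}\DT_3(Y)(\beta/k).
\end{equation*}
For families with non-rigid normal bundles one needs the corresponding local CY 3-fold multi-cover integrals (e.g.\ for $\oO_{\mathbb{P}^1} \oplus \oO_{\mathbb{P}^1}(-2)$) together with matching DT computations on the corresponding local geometry; these are typically treated by further localization.

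The main obstacle is that Katz's conjecture is famously open in the stated generality, and the hard part is not the rigid $(-1,-1)$-case but the contributions from reducible, non-reduced, and higher-dimensional families of supports. Specifically: (i) establishing the ``good'' family hypothesis for a truly generic CY 3-fold is itself a substantial algebro-geometric problem; (ii) identifying, on the DT side, the virtual contribution of thickened sheaves $\oO_{mC}$ and semistable sheaves $\oO_C^{\oplus m}$ with the multi-cover Faber-Pandharipande integral on the GW side has so far been carried out only in local toric examples such as \cite[Corollary A.7.]{CMT}, not uniformly; and (iii) handling families of supports of positive dimension whose general member has $h^1(N) > 0$ requires a delicate excess intersection analysis on both sides. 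Without a new input that bypasses one of these three difficulties, a complete proof of Conjecture \ref{katz conj} in the stated generality appears to be out of reach of the methods used in this paper, and I would expect the present work to verify it only in the special cases covered by Theorem \ref{del-Pez intro}.
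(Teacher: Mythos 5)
This statement is a conjecture (attributed to Katz \cite{Katz}), and the paper offers no proof of it; it is only restated so that Corollary \ref{cor on conj} can translate Conjecture \ref{conj:GW/DT_3} for $S\times\mathbb{P}^1$ into Katz's conjecture for $K_S$. Your assessment is therefore accurate: you correctly recognize that the statement is open in the stated generality, and the three obstacles you list (genericity of the family of supports, matching thickened-sheaf DT contributions with Faber--Pandharipande multiple-cover integrals, and excess intersection for positive-dimensional families) are exactly the reasons it remains a conjecture. Two remarks on how your sketch relates to what the paper actually does. First, your stratification-by-supports strategy is essentially the Gopakumar--Vafa heuristic that the paper itself deploys only as a \emph{geometric explanation} in Section \ref{geo explain} for the Fano case, with an explicit disclaimer that it is not a rigorous proof; so you are proposing the same heuristic the author declines to promote to a proof. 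Second, the special cases in which the paper does verify the conjecture (Theorem \ref{del-Pez}) are \emph{not} obtained by your local matching of curve families: they are obtained by reducing to the toric local surface $K_S$ and invoking the $\mathrm{DT/PT/GW}$ correspondence together with a geometric vanishing from \cite[Corollary A.7]{CMT}. That route sidesteps your obstacles (i)--(iii) entirely by trading the support-by-support analysis for known wall-crossing/correspondence results available in the toric setting, which is why it succeeds where the direct approach does not. Your conclusion that the present paper establishes the statement only in those special cases is correct.
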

Although $K_S$ is non-compact, the moduli spaces used to define $\mathrm{GW}/\DT_3$ invariants are compact, so 
Katz's conjecture still makes sense on $K_S$. By Proposition \ref{compute invs for fano surface}, it is easy to show:
\begin{cor}\label{cor on conj}
In the setting of Proposition \ref{compute invs for fano surface}, Conjecture \ref{conj:GW/DT_3} is true
for $Y=S\times \mathbb{P}^1$ with $\beta\in H_2(S,\mathbb{Z})\subseteq H_2(Y,\mathbb{Z})$ if and only if
Katz's conjecture holds for $K_S$ with $\beta\in H_2(S,\mathbb{Z})$.
\end{cor}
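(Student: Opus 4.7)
The plan is to apply Proposition \ref{compute invs for fano surface} to reduce both sides of Conjecture \ref{conj:GW/DT_3} on $Y = S \times \mathbb{P}^{1}$ to scalar multiples of the two sides of Katz's identity on $K_{S}$, and then invoke nondegeneracy of the Poincar\'e pairing on $S$ to promote that scalar identity back to Katz's conjecture itself.

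First I would cut down the insertion classes. Since $S$ is del-Pezzo we have $H^{1}(S) = H^{3}(S) = 0$, hence $H^{3}(Y, \mathbb{Z}) = 0$ by K\"unneth, so Remark \ref{constrain on insert} tells us it suffices to verify the conjecture for a single insertion $\gamma \in H^{4}(Y, \mathbb{Z})$. Via the K\"unneth decomposition $H^{4}(Y) = H^{4}(S) \oplus \big(H^{2}(S) \otimes H^{2}(\mathbb{P}^{1})\big)$, part (2) of Proposition \ref{compute invs for fano surface} disposes of the first summand: both sides of the conjectured identity vanish there. For $\gamma = (\gamma_{1}, d) \in H^{2}(S) \otimes H^{2}(\mathbb{P}^{1})$, I would substitute the formulas from part (1). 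Combining the relation $(\beta/k) \cdot \gamma_{1} = (\beta \cdot \gamma_{1})/k$ with the $1/k^{2}$ coefficient from Conjecture \ref{conj:GW/DT_3} produces an overall $1/k^{3}$, so the conjectured identity becomes $d \, (\beta \cdot \gamma_{1})$ times
\begin{equation}
\mathrm{GW}_{0, \beta}(K_{S}) = \sum_{k \mid \beta} \frac{1}{k^{3}} \, \DT_{3}(K_{S})(\beta/k), \nonumber
\end{equation}
which is exactly Katz's conjecture for $K_{S}$ at class $\beta$.

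The ``if'' direction is then immediate. For the converse, assuming Conjecture \ref{conj:GW/DT_3} holds, I would take $d = 1$ and choose $\gamma_{1} \in H^{2}(S, \mathbb{Z})$ with $\beta \cdot \gamma_{1} \neq 0$; such a class exists whenever $\beta \neq 0$ by nondegeneracy of the intersection pairing on the smooth projective surface $S$ modulo torsion (the case $\beta = 0$ being vacuous on both sides). Dividing through by this nonzero scalar extracts Katz's conjecture at $\beta$. The argument presents no real obstacle beyond this last elementary observation; essentially all of the geometric content has already been packaged into Proposition \ref{compute invs for fano surface}, and what remains is bookkeeping.
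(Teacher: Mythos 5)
Your proposal is correct and follows essentially the same route as the paper: substitute the formulas of Proposition \ref{compute invs for fano surface} to see that Conjecture \ref{conj:GW/DT_3} for $\gamma=(\gamma_1,d)$ is exactly $d\,(\beta\cdot\gamma_1)$ times Katz's identity for $K_S$ at $\beta$, the shift from $1/k^2$ to $1/k^3$ coming from $(\beta/k)\cdot\gamma_1=(\beta\cdot\gamma_1)/k$. Your additional care in disposing of the $H^4(S)$ insertions via part (2) and in choosing $\gamma_1$ with $\beta\cdot\gamma_1\neq 0$ for the converse only makes explicit what the paper leaves implicit.
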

\iffalse
\begin{proof}
Recall the $\mathrm{GV}/\DT_3$ conjecture for $K_S$ (e.g. \cite{Katz}, \cite[Appendix A]{CMT}) asserts that
\begin{equation}\mathrm{GW}_{0,\beta}(K_S)=\sum_{k|\beta}\frac{1}{k^3}\DT_3(K_S)(\beta/k). \nonumber \end{equation}
By Proposition \ref{compute invs for fano surface}, for $\gamma=(\gamma_1,d)\in H^2(S)\otimes H^2(\mathbb{P}^1)\subseteq H^4(Y)$, 
this is equivalent to 
\begin{equation}\frac{1}{d\,(\beta\cdot\gamma_1)}\cdot\mathrm{GW}^{\mathrm{twist}}_{0, \beta}(Y)(\gamma)=\sum_{k|\beta}
\frac{k}{d\,(\beta\cdot\gamma_1)}\cdot\frac{1}{k^3}\cdot\DT_3^{\mathrm{twist}}(Y)(\beta/k\textrm{ }|\textrm{ }\gamma), \nonumber \end{equation}
i.e. Conjecture \ref{conj:GW/DT_3} holds.
\end{proof}
\fi
Combining with the previous checks of Katz's conjecture, we obtain:
\begin{thm}\label{del-Pez}
Let $S$ be a toric del-Pezzo surface and $Y=S\times\mathbb{P}^1$ be the product. Then Conjecture \ref{conj:GW/DT_3} is true
for $\beta\in H_2(S,\mathbb{Z})\subseteq H_2(Y,\mathbb{Z})$.
\end{thm}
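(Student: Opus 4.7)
The plan is to combine the two tools already set up in the paper: the reduction contained in Corollary \ref{cor on conj}, and the known validity of Katz's multiple cover formula (Conjecture \ref{katz conj}) for the local Calabi--Yau 3-fold $K_S$ when $S$ is toric del-Pezzo.

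First, I would invoke Corollary \ref{cor on conj}, which tells us that for $Y = S \times \mathbb{P}^1$ and classes $\beta \in H_2(S,\mathbb{Z}) \subseteq H_2(Y,\mathbb{Z})$, Conjecture \ref{conj:GW/DT_3} for $Y$ at class $\beta$ is \emph{equivalent} to Katz's conjecture for the non-compact Calabi--Yau 3-fold $K_S$ at the same class $\beta$. This equivalence is a direct computation based on Proposition \ref{compute invs for fano surface}: the only non-trivial insertion class is $\gamma = (\gamma_1, d) \in H^2(S)\otimes H^2(\mathbb{P}^1)$, and both the twisted GW and twisted $\DT_3$ sides of Conjecture \ref{conj:GW/DT_3} scale by the common factor $d\,(\beta\cdot\gamma_1)$, so Katz's identity $\mathrm{GW}_{0,\beta}(K_S) = \sum_{k\mid\beta} k^{-3} \DT_3(K_S)(\beta/k)$ translates term-by-term into the desired multiple cover formula on $Y$.

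Second, I would appeal to \cite[Corollary A.7.]{CMT}, which asserts Katz's conjecture in genus zero for $K_S$ whenever $S$ is a toric del-Pezzo surface. (This is the toric specialization of the local Gopakumar--Vafa / genus-zero multiple cover formula, which for toric Calabi--Yau 3-folds is accessible through localization and/or the topological vertex; in particular it is already in the literature the paper is citing.) Substituting this into the equivalence of the previous step yields Conjecture \ref{conj:GW/DT_3} for $Y = S \times \mathbb{P}^1$ at every $\beta \in H_2(S,\mathbb{Z})$, completing the proof.

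There is no real obstacle here: once Corollary \ref{cor on conj} is in hand, the theorem is a one-line reduction to a quoted result. If anything, the only point that requires care is to check that the toricity hypothesis on $S$ is exactly the hypothesis under which \cite[Corollary A.7.]{CMT} supplies Katz's formula for $K_S$, and that the relevant $\beta$ is allowed there; both match by construction. One could optionally remark that since the genus-zero $\mathrm{GV}/\DT_3$ correspondence is known in broader generality (e.g.\ whenever $M_{K_S,\beta}$ admits a sufficiently good structure, via the results of Maulik--Toda and others), the same proof yields Conjecture \ref{conj:GW/DT_3} for $Y = S\times \mathbb{P}^1$ in any case where Katz's conjecture is established for $K_S$.
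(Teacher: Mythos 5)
Your proposal is correct and follows exactly the paper's own argument: reduce via Corollary \ref{cor on conj} to Katz's conjecture for $K_S$, then quote \cite[Corollary A.7.]{CMT} (which establishes it for toric del-Pezzo $S$ via the $\mathrm{DT/PT/GW}$ correspondence and a geometric vanishing). Nothing is missing.
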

\begin{proof}
By Corollary \ref{cor on conj}, we are reduced to prove the Katz's conjecture for $K_S$. 
This is done by combining $\rm{DT/PT/GW}$ correspondence and geometric vanishing in \cite[Corollary A.7.]{CMT}.
\end{proof}
\begin{rmk}
When $Y=\mathcal{O}_{\mathbb{P}^1}(-1)\times\mathbb{P}^1$, Conjecture \ref{conj:GW/DT_3} is true and can be reduced to the 
Aspinwall-Morrison multiple cover formula
\begin{equation}\mathrm{GW}_{0,d}(X)=\sum_{d|n}\frac{1}{d^3}, \nonumber \end{equation}
\begin{equation}\DT_3(X)(d)=1, \textrm{ }\textrm{if}\textrm{ }\,d=1\,; \quad 
\DT_3(X)(d)=0, \textrm{ }\textrm{if}\textrm{ }\,d>1, \nonumber \end{equation}
for $X=\mathcal{O}_{\mathbb{P}^1}(-1,-1)$.
\end{rmk}

\section{Non-compact examples}

\subsection{Local surfaces}

%\subsection{Stable sheaves without thickening}
Let $(S,\mathcal{O}_S(1))$ be a smooth projective surface and
\begin{align}\label{X:tot}
\pi \colon
Y=\mathrm{Tot}_S(L) \to S
\end{align}
be the total space of a line bundle $L$ on $S$.
When $L\otimes K^{-1}_S$ is ample, $Y$ is a non-compact Fano 3-fold.
We take a curve class
\begin{align*}
\beta \in H_2(Y, \mathbb{Z})\cong H_2(S, \mathbb{Z}),
\end{align*}
and consider the moduli scheme $M_{Y,\beta}$ (resp. $M_{S, \beta}$) of one dimensional stable sheaves $F$ on $Y$ (resp. on $S$) with $[F]=\beta$ and $\chi(F)=1$.  
%We also consider the moduli scheme $M_{S, \beta}$ of one dimensional stable sheaves $F$ on $S$ with $[F]=\beta$ and $\chi(F)=1$. 
Note that $M_{S, \beta}$ is compact while $M_{Y, \beta}$ may not be compact. 
On the other hand, for the zero section 
$\iota \colon S \hookrightarrow Y$ 
of the projection (\ref{X:tot}), we have the 
push-forward embedding
\begin{align}\label{tau:emb}
\iota_{\ast} \colon M_{S, \beta} \hookrightarrow M_{Y, \beta}. 
\end{align}
In the following setting, the morphism (\ref{tau:emb}) is an isomorphism and $M_{Y, \beta}$ has a well-defined virtual class.
\begin{lem}\label{vir:loc neg}$($\cite[Proposition 3.1]{CMT}$)$
If $S$ is a del-Pezzo surface and $L^{-1}$ is ample, then
(\ref{tau:emb}) is an isomorphism, under which we have
\begin{align*}[M_{Y, \beta}]^{\rm{vir}}=[M_{S, \beta}]\cdot
e\left(\eE xt^1_{\pi_{M_S}}(\mathbb{F}, \mathbb{F} \boxtimes L)\right), \end{align*}
where $\mathbb{F} \in \Coh(S \times M_{S, \beta})$ is the universal sheaf and $\pi_{M_{S}}:S\times M_{S, \beta}\rightarrow M_{S, \beta}$ is the projection.
\end{lem}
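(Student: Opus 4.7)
The plan is first to upgrade the morphism $\iota_\ast$ to a scheme isomorphism, then to identify the virtual class via an Ext calculation using the Koszul resolution of $\iota_\ast\oO_S$ on $Y$.

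For surjectivity of $\iota_\ast$ on points, let $F\in M_{Y,\beta}$. Writing $A=\pi_\ast\oO_Y\cong\Sym^\bullet L^{-1}$, the sheaf $\pi_\ast F$ is an $A$-module whose $A$-action is determined by a single $\oO_S$-linear map $\phi\colon \pi_\ast F\to \pi_\ast F\otimes L$, and $F$ is scheme-theoretically supported on the zero section if and only if $\phi=0$. Since $L^{-1}$ is ample, $c_1(L)\cdot\beta<0$, so $\chi(\pi_\ast F\otimes L)<\chi(\pi_\ast F)$ while the fundamental classes of the supports agree; hence $\pi_\ast F$ and $\pi_\ast F\otimes L$ have distinct reduced Hilbert polynomials. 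A standard argument then shows that a nonzero $\phi$ produces a sub-$A$-module of $\pi_\ast F$ (the image of the action of $I=\bigoplus_{n\geqslant 1}L^{-n}$) that violates the stability of $F$. Thus $\phi=0$ and $F=\iota_\ast\iota^\ast F$.

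For the scheme-theoretic statement and the obstruction theory, I would compute $\Ext^\bullet_Y(\iota_\ast F,\iota_\ast F)$. Since $\iota\colon S\hookrightarrow Y$ is a codimension-one regular embedding with $N_{S/Y}\cong L$, the Koszul resolution $[\pi^\ast L^{-1}\to\oO_Y]\simeq\iota_\ast\oO_S$ yields
\begin{align*}
\iota^\ast\iota_\ast F\cong F\oplus F\otimes L^{-1}[1],
\end{align*}
and hence by adjunction
\begin{align*}
\Ext^i_Y(\iota_\ast F,\iota_\ast F)\cong \Ext^i_S(F,F)\oplus \Ext^{i-1}_S(F,F\otimes L).
\end{align*}
The del-Pezzo assumption ($-K_S$ ample) gives $\Ext^2_S(F,F)\cong \Hom_S(F,F\otimes K_S)^\vee=0$ by Serre duality and stability, and the same slope argument with $L$ in place of $K_S$ yields $\Hom_S(F,F\otimes L)=0$. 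Consequently $\Ext^1_Y(\iota_\ast F,\iota_\ast F)\cong\Ext^1_S(F,F)$ matches tangent spaces, upgrading $\iota_\ast$ to a scheme isomorphism, while $\Ext^2_Y(\iota_\ast F,\iota_\ast F)\cong \Ext^1_S(F,F\otimes L)$ identifies the obstruction space.

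Finally, the vanishing $\Ext^2_S(F,F)=0$ forces $M_{S,\beta}$ to be smooth of dimension $\dim\Ext^1_S(F,F)$, so the $\DT_3$ obstruction sheaf on $M_{Y,\beta}\cong M_{S,\beta}$ is locally free of constant rank; the pointwise identification above globalizes to $\eE xt^1_{\pi_{M_S}}(\mathbb{F},\mathbb{F}\boxtimes L)$. The virtual class is then the Euler class of this bundle capped with $[M_{S,\beta}]$, as claimed. The main obstacle is the first step, since translating the stability of $F$ on $Y$ into a slope statement on $S$ requires a compatible choice of polarization and a careful analysis of how sub-$A$-modules of $\pi_\ast F$ correspond to subsheaves of $F$ on $Y$; the rest is a direct unwinding of the Koszul resolution.
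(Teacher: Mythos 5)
Your proposal is correct and follows essentially the same route as the paper: the paper simply cites \cite[Proposition 3.1]{CMT} for the fact that every stable sheaf on $Y$ is scheme-theoretically supported on the zero section (your $\Sym^{\bullet}L^{-1}$-module/stability argument is the content of that citation), and then records exactly your two canonical isomorphisms $\Ext^{1}_{Y}(\iota_*\mathcal{E},\iota_*\mathcal{E})\cong \Ext^{1}_{S}(\mathcal{E},\mathcal{E})$ and $\Ext^{2}_{Y}(\iota_*\mathcal{E},\iota_*\mathcal{E})\cong \Ext^{1}_{S}(\mathcal{E},\mathcal{E}\otimes L)$, justified by the same vanishings $\Ext^{2}_{S}(\mathcal{E},\mathcal{E})=0$ and $\Hom_{S}(\mathcal{E},\mathcal{E}\otimes L)=0$ from Serre duality and stability. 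Your Koszul-resolution derivation of those isomorphisms and the globalization to the obstruction bundle are just the details the paper leaves implicit.
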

\begin{proof}
From the proof of \cite[Proposition 3.1]{CMT}, for any $F\in M_{Y, \beta}$, there exists $\mathcal{E}\in M_{S,\beta}$ such that 
$F=\iota_*(\mathcal{E})$. To compare the deformation-obstruction theory, we have canonical isomorphisms
\begin{align*}
&\Ext^{1}_{Y}(\iota_*\mathcal{E},\iota_*\mathcal{E})\cong \Ext^{1}_{S}(\mathcal{E},\mathcal{E}),  \\
&\Ext^{2}_{Y}(\iota_*\mathcal{E},\iota_*\mathcal{E})\cong \Ext^{1}_{S}(\mathcal{E},\mathcal{E}\otimes L),  
\end{align*}
where $\Ext^{2}_{S}(\mathcal{E},\mathcal{E})\cong \Ext^{0}_{S}(\mathcal{E},\mathcal{E}\otimes K_S)^*=0$ and $\Ext^{0}_{S}(\mathcal{E},\mathcal{E}\otimes L)=0$ by the stability of $\eE$.
\end{proof}
Note that twisted $\DT_3$ invariants of $Y=\mathrm{Tot}_S(L)$ are invariant under the transformation $L\mapsto L^{-1}\otimes K_S$.
\begin{prop}\label{symm under change of line bdl}
Let $S$ be a del-Pezzo surface and $Y_1=\mathrm{Tot}_S(L)$, $Y_2=\mathrm{Tot}_S(L^{-1}\otimes K_S)$ be the total space of
ample line bundles $L^{-1}$, $L\otimes K^{-1}_S$ respectively on $S$. Then
\begin{align*}[M_{Y_1, \beta}]^{\rm{vir}}=(-1)^{\beta^2}\cdot[M_{Y_2, \beta}]^{\rm{vir}}\in H_2(M_{S, \beta},\mathbb{Z}). \end{align*}
In particular, we have 
\begin{equation}
\DT_3^{\mathrm{twist}}(Y_1)(\beta\textrm{ }|\textrm{ }[\mathrm{pt}])=
\DT_3^{\mathrm{twist}}(Y_2)(\beta\textrm{ }|\textrm{ }[\mathrm{pt}]).
\nonumber \end{equation}
\end{prop}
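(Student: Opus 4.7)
The plan is to reduce the identity of virtual classes to a duality of obstruction bundles coming from Serre duality on $S$, and then to control the remaining signs in the twisted $\DT_3$ invariants using the parity relation (\ref{parity del-pezzo}).

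First I would apply Lemma \ref{vir:loc neg} to both sides, writing $[M_{Y_i,\beta}]^{\rm{vir}} = [M_{S,\beta}] \cdot e(E_i)$, where $E_1 := \eE xt^1_{\pi_{M_S}}(\mathbb{F}, \mathbb{F}\boxtimes L)$ and $E_2 := \eE xt^1_{\pi_{M_S}}(\mathbb{F}, \mathbb{F}\boxtimes L^{-1}\otimes K_S)$. Relative Grothendieck-Serre duality for the smooth projective morphism $\pi_{M_S}\colon S \times M_{S,\beta} \to M_{S,\beta}$, combined with the $\Ext^0$- and $\Ext^2$-vanishing established in the proof of Lemma \ref{vir:loc neg}, should yield a canonical isomorphism $E_2 \cong E_1^{\vee}$; pointwise this reduces to the classical identity $\Ext^1_S(\mathcal{E}, \mathcal{E}\otimes L^{-1}\otimes K_S) \cong \Ext^1_S(\mathcal{E}, \mathcal{E}\otimes L)^*$. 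I would then compute the rank of $E_1$ by Hirzebruch-Riemann-Roch applied to $\chi(\mathcal{E}, \mathcal{E}\otimes L)$ for a one-dimensional stable sheaf with $\ch(\mathcal{E}) = (0,\beta, 1 - \tfrac{1}{2}c_1(S)\cdot\beta)$; a short calculation gives $\chi(\mathcal{E}, \mathcal{E}\otimes L) = -\beta^2$, hence $\rk E_1 = \beta^2$. The standard identity $e(E^{\vee}) = (-1)^{\rk E} e(E)$ then yields the virtual class equation $[M_{Y_2,\beta}]^{\rm{vir}} = (-1)^{\beta^2} [M_{Y_1,\beta}]^{\rm{vir}}$.

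For the equality of twisted $\DT_3$ invariants, I would first show that the insertion $\tau([\mathrm{pt}])$ is independent of $L$. Writing $[\mathrm{pt}] = \pi_i^*[\mathrm{pt}_S] \in H^4(Y_i)$ and using the projection formula along the embedding $M_{S,\beta} \times S \hookrightarrow M_{S,\beta} \times Y_i$ (under which the universal sheaf $\eE_{Y_i}$ is the pushforward of $\mathbb{F}$), one checks that $\tau([\mathrm{pt}])$ reduces to one and the same class on $M_{S,\beta}$ for both $Y_1$ and $Y_2$. From $K_{Y_i} = \pi_i^*(K_S \otimes L_i^{-1})$ one computes $c_1(Y_1)\cdot\beta = (c_1(L)+c_1(S))\cdot\beta$ and $c_1(Y_2)\cdot\beta = -c_1(L)\cdot\beta$, so combining the difference of prefactors $(-1)^{c_1(Y_i)\cdot\beta-1}$ from (\ref{twist DT_3}) with the sign $(-1)^{\beta^2}$ from the virtual class produces an overall factor of $(-1)^{c_1(S)\cdot\beta+\beta^2}$, which is $+1$ by the parity relation (\ref{parity del-pezzo}) for del-Pezzo surfaces.

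The main obstacle I anticipate is promoting the pointwise Serre duality isomorphism to a genuine isomorphism of vector bundles $E_2 \cong E_1^{\vee}$ on $M_{S,\beta}$, which requires carefully invoking relative Grothendieck duality together with base-change, and checking that $\eE xt^0_{\pi_{M_S}}(\mathbb{F}, \mathbb{F}\boxtimes L)$ and $\eE xt^2_{\pi_{M_S}}(\mathbb{F}, \mathbb{F}\boxtimes L)$ vanish (so that the higher-direct-image complex collapses to a locally free sheaf in a single degree, cleanly dual to its counterpart under duality). Once this is in place, the remaining sign bookkeeping is routine.
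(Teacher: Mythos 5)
Your proposal is correct and follows essentially the same route as the paper: apply Lemma \ref{vir:loc neg} to write both virtual classes as Euler classes of $\eE xt^1$-bundles, use relative Grothendieck--Verdier duality to identify one bundle with the dual of the other, compute the rank $\beta^2$ by Riemann--Roch, and cancel the resulting sign against the difference of prefactors via the parity relation (\ref{parity del-pezzo}). The extra care you take with base change and with checking that the insertion $\tau([\mathrm{pt}])$ is independent of $L$ is sound but does not change the argument.
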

\begin{proof}
By Lemma \ref{vir:loc neg}, we are left to determine the relation between
$e\big(\eE xt^1_{\pi_{M_S}}(\mathbb{F}, \mathbb{F} \boxtimes L)\big)$ 
and $e\big(\eE xt^1_{\pi_{M_S}}(\mathbb{F}, \mathbb{F} \boxtimes L^{-1}\otimes K_S)\big)$. By the Grothendieck-Verdier duality, 
\begin{equation}\eE xt^1_{\pi_{M_S}}(\mathbb{F}, \mathbb{F} \boxtimes L)\cong \eE xt^1_{\pi_{M_S}}(\mathbb{F}, \mathbb{F} \boxtimes 
L^{-1}\otimes K_S)^*. \nonumber \end{equation}
Hence, 
\begin{equation}e\big(\eE xt^1_{\pi_{M_S}}(\mathbb{F}, \mathbb{F} \boxtimes L)\big)=(-1)^{\mathrm{ext}^1(F,F\otimes L)}e\big(\eE xt^1_{\pi_{M_S}}(\mathbb{F}, \mathbb{F} \boxtimes L^{-1}\otimes K_S)\big), \nonumber \end{equation}
where $\mathrm{ext}^1(F,F\otimes L)=\beta^2$ can be computed by the Riemann-Roch formula.
Then 
\begin{eqnarray*}
(-1)^{c_1(Y_1)\cdot\beta-1}&=&(-1)^{(c_1(S)+c_1(L))\cdot\beta-1} \\
&=&(-1)^{c_1(L)\cdot\beta+\beta^2-1} \\
&=&(-1)^{c_1(Y_2)\cdot\beta-1}\cdot(-1)^{\beta^2},
\end{eqnarray*}
where we use (\ref{parity del-pezzo}) in the second equality. 
\end{proof}
One can easily check that twisted GW invariants of $Y_1$ and $Y_2$ are also the same. Hence
\begin{cor}\label{local surface symm}
Let $S$ be a del-Pezzo surface and $Y_1=\mathrm{Tot}_S(L)$, $Y_2=\mathrm{Tot}_S(L^{-1}\otimes K_S)$ be the total space of
ample line bundles $L^{-1}$, $L\otimes K^{-1}_S$ respectively on $S$. 
Then Conjecture \ref{conj:GW/DT_3} is true for $Y_1$ if and only if it is true for $Y_2$.
\end{cor}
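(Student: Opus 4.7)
The strategy is to complement Proposition \ref{symm under change of line bdl} (which handles the twisted $\DT_3$ side) with the analogous equality on the twisted GW side,
$$\mathrm{GW}^{\mathrm{twist}}_{0,\beta}(Y_1)(\gamma) = \mathrm{GW}^{\mathrm{twist}}_{0,\beta}(Y_2)(\gamma).$$
Granted this, applying Proposition \ref{symm under change of line bdl} to each divisor $\beta/k$ shows that both sides of the multi-cover identity in Conjecture \ref{conj:GW/DT_3} agree term by term between $Y_1$ and $Y_2$, so the conjecture holds for one if and only if it holds for the other.

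For the GW equality, I first observe that because $L_i^{-1}$ is ample, any compact curve in $Y_i=\mathrm{Tot}_S(L_i)$ must lie in the zero section: the restriction of $L_i$ to any such curve has negative degree and admits no non-zero section. Consequently every stable map $f\colon\mathcal{C}\to Y_i$ factors as $f=\iota_i\circ g$ for a stable map $g\colon\mathcal{C}\to S$, and push-forward gives a scheme isomorphism
$$\overline{M}_{0,n}(Y_i,\beta)\cong\overline{M}_{0,n}(S,\beta),$$
with evaluations compatible under the identification $H^{\ast}(Y_i)\cong H^{\ast}(S)$.

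Next, using $\iota_i^{\ast}TY_i\cong TS\oplus L_i$ and the vanishing $\pi_{\ast}g^{\ast}L_i=0$ (again from $L_i^{-1}$ ample), the perfect obstruction theory of the $Y_i$-moduli decomposes as the $S$-obstruction theory plus the genuine vector bundle $R^1\pi_{\ast}g^{\ast}L_i$, yielding the virtual class relation
$$[\overline{M}_{0,n}(Y_i,\beta)]^{\mathrm{vir}}=e\bigl(-\dR\pi_{\ast}g^{\ast}L_i\bigr)\cap[\overline{M}_{0,n}(S,\beta)]^{\mathrm{vir}}.$$
Combining with the twist factor $e(-\dR\pi_{\ast}f^{\ast}K_{Y_i})=e\bigl(-\dR\pi_{\ast}g^{\ast}(K_S\otimes L_i^{-1})\bigr)$, the twisted GW invariant becomes
$$\mathrm{GW}^{\mathrm{twist}}_{0,\beta}(Y_i)(\gamma)=\int_{[\overline{M}_{0,n}(S,\beta)]^{\mathrm{vir}}} e\bigl(-\dR\pi_{\ast}g^{\ast}L_i\bigr)\cup e\bigl(-\dR\pi_{\ast}g^{\ast}(K_S\otimes L_i^{-1})\bigr)\cup\prod_{j=1}^{n}\mathrm{ev}^{\ast}\iota_i^{\ast}\gamma_j.$$
For $Y_1$ the unordered pair of line bundles on $S$ is $\{L,\,K_S\otimes L^{-1}\}$, and for $Y_2$ it is $\{L^{-1}\otimes K_S,\,L\}$ --- the same unordered pair. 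Hence the two integrands coincide and the GW equality follows.

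The principal technical point is the obstruction-theoretic factorization of the virtual class in the previous step; it requires that $h^0(\mathcal{C},g^{\ast}L_i)=0$ be constant (in fact zero) across the moduli, which follows from $L_i^{-1}$ ample forcing $\deg g^{\ast}L_i<0$ on every irreducible component of every stable map. Once this standard check is in place, combining the GW symmetry with Proposition \ref{symm under change of line bdl} gives the biconditional asserted in the corollary.
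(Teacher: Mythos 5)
Your proposal is correct and follows the same route as the paper: Proposition \ref{symm under change of line bdl} gives the twisted $\DT_3$ equality, and the paper simply asserts that ``one can easily check'' the corresponding twisted GW equality before stating the corollary. Your write-up supplies exactly that omitted check (factoring stable maps through the zero section via ampleness of $L_i^{-1}$, and observing that the product of Euler classes depends only on the unordered pair $\{L_i,\,K_S\otimes L_i^{-1}\}$), so it matches the paper's argument with the details filled in.
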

Combining with \cite[Sect. 3.2]{CMT}, we then verify our conjecture for the following:
\begin{prop}\label{local p2}
Let $Y=\oO_{\mathbb{P}^2}(-1)$ or $\oO_{\mathbb{P}^2}(-2)$ and $\beta=d[l]\in H_2(\mathbb{P}^2,\mathbb{Z})$ be the degree $d$ class. Then Conjecture \ref{conj:GW/DT_3} is true when $d\leqslant 3$.
\end{prop}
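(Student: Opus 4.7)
The plan is to combine a symmetry reduction with previous computations on the associated local CY 4-fold. First I would apply Corollary~\ref{local surface symm} with $S=\mathbb{P}^2$ and $L=\oO_{\mathbb{P}^2}(-1)$: both $L^{-1}=\oO_{\mathbb{P}^2}(1)$ and $L\otimes K_S^{-1}=\oO_{\mathbb{P}^2}(2)$ are ample, and $L^{-1}\otimes K_S=\oO_{\mathbb{P}^2}(-2)$. Hence Conjecture~\ref{conj:GW/DT_3} for $Y=\oO_{\mathbb{P}^2}(-1)$ is equivalent to Conjecture~\ref{conj:GW/DT_3} for $Y=\oO_{\mathbb{P}^2}(-2)$, so it suffices to verify it in a single case.

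Next, I would bridge the twisted $\mathrm{GW}/\DT_3$ theory on $Y$ to the $\mathrm{GW}/\DT_4$ theory on $X=K_Y$, as sketched in Section~\ref{geo explain}. On the sheaf side, $M_{Y,\beta}\cong M_{X,\beta}$ and Lemma~\ref{vir:loc neg} identifies $[M_{Y,\beta}]^{\mathrm{vir}}$ with the $\DT_4$ virtual class of $X$ up to the sign that encodes the orientation; on the stable maps side the factor $e(-\dR\pi_*f^*K_Y)$ in~(\ref{twist GW}) is precisely the correction that converts $\overline{M}_{0,n}(X,\beta)$-theory into $\overline{M}_{0,n}(Y,\beta)$-theory. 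The sign $(-1)^{c_1(Y)\cdot\beta-1}$ built into~(\ref{twist DT_3}) is, by the argument of Section~\ref{geo explain}, exactly the one needed for the chosen orientation of $M_{X,\beta}$ to match the GW side.

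For both choices $Y=\oO_{\mathbb{P}^2}(-1)$ and $Y=\oO_{\mathbb{P}^2}(-2)$, the resulting local CY 4-fold is the same, namely $X=\oO_{\mathbb{P}^2}(-1)\oplus\oO_{\mathbb{P}^2}(-2)$. In \cite[Sect.~3.2]{CMT}, the equivariant $\DT_4$ invariants of $M_{X,d[l]}$ are computed by torus localization and checked against the equivariant GW invariants of $X$ for $d\leqslant 3$; the outcome is exactly Conjecture~\ref{conj:GW/DT_4} in these degrees. Pulling this identity back through the correspondence above yields Conjecture~\ref{conj:GW/DT_3} for $d\leqslant 3$ on $\oO_{\mathbb{P}^2}(-1)$, hence on $\oO_{\mathbb{P}^2}(-2)$ as well.

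The main (bookkeeping) obstacle is to verify that the specific orientation used in \cite[Sect.~3.2]{CMT} is precisely the one produced by the twist sign $(-1)^{c_1(Y)\cdot d[l]-1}$ in~(\ref{twist DT_3}); concretely, for $Y=\oO_{\mathbb{P}^2}(-1)$ one has $c_1(Y)\cdot d[l]=2d$ so the twist sign is $-1$, while for $Y=\oO_{\mathbb{P}^2}(-2)$ it is $(-1)^{d-1}$, and the extra $(-1)^{\beta^2}=(-1)^{d^2}$ from Proposition~\ref{symm under change of line bdl} reconciles the two. Once this sign check is made, the proposition follows from the symmetry reduction and the CMT verification of the $\DT_4$/GW conjecture on $\oO_{\mathbb{P}^2}(-1)\oplus\oO_{\mathbb{P}^2}(-2)$ in degrees $d\leqslant 3$.
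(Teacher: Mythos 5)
Your proposal is correct and follows essentially the same route as the paper, which gives no written proof beyond the phrase ``combining with \cite[Sect.~3.2]{CMT}'' placed immediately after Corollary~\ref{local surface symm}: namely, use the symmetry $L\mapsto L^{-1}\otimes K_S$ to identify the two cases, observe that both have $X=K_Y=\oO_{\mathbb{P}^2}(-1)\oplus\oO_{\mathbb{P}^2}(-2)$, and import the $d\leqslant 3$ verification from \cite[Sect.~3.2]{CMT}. Your explicit sign check ($(-1)^{2d-1}=(-1)^{d-1}\cdot(-1)^{d^2}$) is a useful piece of bookkeeping that the paper leaves implicit.
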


\subsection{Local curves}
In \cite[Section 4]{CMT}, we study local curves of form
\begin{align*} 
X=\mathrm{Tot}_{\mathbb{P}^1}\big(\oO_{\mathbb{P}^1}(l_1) \oplus \oO_{\mathbb{P}^1}(l_2) \oplus \oO_{\mathbb{P}^1}(l_3)\big)\to\mathbb{P}^1,\end{align*}
where $l_1+l_2+l_3=-2$ so that it is a non-compact CY 4-fold.
Without loss of generality, we may assume $l_1 \geqslant l_2 \geqslant l_3$, so $l_3<0$. We can regard $X$
as the total space of canonical bundle of 
\begin{equation}\label{local curve}Y=\mathrm{Tot}_{\mathbb{P}^1}\big(\oO_{\mathbb{P}^1}(l_1) \oplus \oO_{\mathbb{P}^1}(l_2)\big)\, \textrm{ }\textrm{with}\textrm{ }\,l_1+l_2\geqslant -1. \end{equation}
In this section, we study an equivariant version of Conjecture~\ref{conj:GW/DT_3} for non-compact Fano 3-fold $Y$ as 
\cite[Conjecture 4.10]{CMT} for non-compact CY 4-fold $X$. 

\subsubsection{Equivariant GW invariants}
%${}$\\\textbf{Equivariant GW invariants}.
As a toric variety, $Y$ contains a torus $(\mathbb{C}^{\ast})^{3}$ and has two invariant affine open subsets with
transition map 
\begin{equation}(z_0,z_1,z_2)\mapsto(z_0^{-1},z_0^{-l_1}z_1,z_0^{-l_2}z_2). \nonumber \end{equation}
The action of $(\mathbb{C}^{\ast})^{3}$ is given by
\begin{equation}t\cdot(z_0,z_1,z_2)\mapsto(t_0z_0,t_1z_1,t_2z_2). \nonumber \end{equation}
Let $\bullet$ denote $\Spec \mathbb{C}$ with trivial $(\mathbb{C}^{\ast})^{3}$-action.
Let $\mathbb{C} \otimes t_i$ be the one dimensional vector
space with $(\mathbb{C}^{\ast})^{3}$-action
with weight $t_i$,
and
$\lambda_i \in H_{(\mathbb{C}^{\ast})^{3}}^{\ast}(\bullet)$
its 1st Chern class.

Let $T:=\{(t_1,t_2)\,|\,(t_0,t_1,t_2)\in(\mathbb{C}^{\ast})^{3}\}$ be the two dimensional subtorus which acts trivially on the base of $Y$.
Note that
\begin{align*} 
H_{(\mathbb{C}^{\ast})^{3}}^{\ast}(\bullet)=\mathbb{C}[\lambda_0, \lambda_1, \lambda_2], \quad 
H_{T}^{\ast}(\bullet)=\mathbb{C}[\lambda_1, \lambda_2]. 
\end{align*}
Let $j \colon \mathbb{P}^1 \hookrightarrow Y$
be the zero section.
Note that we have $H_2(Y, \mathbb{Z})=\mathbb{Z}[\mathbb{P}^1]$,
%\begin{align*}H_2(X, \mathbb{Z})=\mathbb{Z}[\mathbb{P}^1], \end{align*}
where $[\mathbb{P}^1]$ is the fundamental class
of $j(\mathbb{P}^1)$.
For $d \in \mathbb{Z}_{>0}$,
we consider the diagram
\begin{align*}
\xymatrix{
\cC  \ar[r]^{f} \ar[d]^{p} & \mathbb{P}^1 \\
\overline{M}_{0,0}(\mathbb{P}^1, d), &}
\end{align*}
where $\cC$ is the universal curve and $f$ is the universal stable map.

The $T$-\textit{equivariant twisted} $\mathrm{GW}$ \textit{invariant} of $Y$ is defined by
\begin{align*}
\mathrm{GW}^{\mathrm{twist}}_{0, d}(Y):=\int_{[\overline{M}_{0,0}(\mathbb{P}^1, d)]}
e_T\big(-\dR p_{\ast}f^{\ast}N\big)\in \mathbb{Q}(\lambda_1, \lambda_2), \end{align*}
where 
\begin{equation}\label{nor bdl N}
N:=\big(\oO_{\mathbb{P}^1}(l_1) \otimes t_1\big) \oplus \big(\oO_{\mathbb{P}^1}(l_2) \otimes t_2\big) \oplus \big(\oO_{\mathbb{P}^1}(-2-l_1-l_2) \otimes t_1^{-1}t_2^{-1}\big) \end{equation}
is the $T$-equivariant normal bundle of $j(\mathbb{P}^1) \subset X$.
\begin{rmk}
One can furthermore use the $\mathbb{C}^*$-action on the base $\mathbb{P}^1$ to do calculations.
\end{rmk}
For example in the $d=1$ case,
$\overline{M}_{0,0}(\mathbb{P}^1, 1)$ is one point and
\begin{align}\label{GW01}
\mathrm{GW}^{\mathrm{twist}}_{0, 1}(Y)
=\lambda_1^{-l_1-1} \lambda_2^{-l_2-1} (-\lambda_1-\lambda_2)^{l_1+l_2+1}.
\end{align}
The $d=2$ case is more complicated and can be computed by Kontsevich's localization method  
\begin{align}\label{GW02}
\mathrm{GW}^{\mathrm{twist}}_{0, 2}(Y)=&\,\frac{1}{8}
\lambda_1^{-2l_1-1}\lambda_2^{-2l_2-1}(-\lambda_1-\lambda_2)^{-2l_3-1}
\left\{(\overline{l}_1^2-(\overline{l}_1-1)^2+\cdots)
\lambda_1^{-2} + \right. \\ \notag &\left.
(\overline{l}_2^2-(\overline{l}_2-1)^2+\cdots) \lambda_2^{-2}
+(\overline{l}_3^2-(\overline{l}_3-1)^2+\cdots) (\lambda_1+\lambda_2)^{-2} \right. \\ \notag & \left.
+\,l_1 l_2 \lambda_1^{-1} \lambda_2^{-1}+l_2 l_3 \lambda_2^{-1} (-\lambda_1-\lambda_2)^{-1}
+l_1 l_3 \lambda_1^{-1} (-\lambda_1-\lambda_2)^{-1} \right\},
\end{align}
where $l_3:=-l_1-l_2-2$ and we write $\overline{l}=l$ for $l\geqslant 0$
and $\overline{l}=-l-1$ for $l<0$ (see e.g. \cite[(4.6)]{CMT}).

\subsubsection{Equivariant $DT_3$ invariants}
Let $M_{Y,d}$ (resp. $M_{X,d}$) be the moduli scheme of one dimensional stable sheaves $F$'s on 
$Y=\mathrm{Tot}_{\mathbb{P}^1}\big(\oO_{\mathbb{P}^1}(l_1) \oplus \oO_{\mathbb{P}^1}(l_2)\big)$ with $l_1+l_2\geqslant -1$ (resp. on $X=K_Y$) such that 
$[F]=d\,[\mathbb{P}^1]\in\mathbb{Z}[\mathbb{P}^1]$ and $\chi(F)=1$. Note that $M_{Y,d}$ is non-compact and the virtual class is not well-defined, 
but the induced $T$-action on $M_{Y,d}$ has compact fixed locus. 
\begin{lem} ${}$ \\
(1) The zero section $\iota: Y\hookrightarrow X $ induces an isomorphism
\begin{equation}\iota_*:M_{Y,d} \cong M_{X,d}. \nonumber \end{equation}
(2) The $T$-fixed locus $M_{Y,d}^T$ is compact. 
\end{lem}
\begin{proof}
(1) We claim that any $[F]\in M_{X,d}$ is scheme theoretically supported on $Y$. In fact, we have an exact sequence
\begin{equation}F\otimes \pi^*\oO_{\mathbb{P}^1}(2+l_1+l_2) \to F \to F\otimes \oO_Y \to 0, \nonumber \end{equation}
where $\pi: X\to \mathbb{P}^1$ is the projection. By the stability of $F$ and $2+l_1+l_2\geqslant1$, we know the first map is 
zero. Hence $F \cong F\otimes \oO_Y$ and $\iota_*:M_{Y,d}\to M_{X,d}$ is an isomorphism.

(2) Let $\overline{Y}:=\mathbb{P}\big(\oO_{\mathbb{P}^1}(l_1) \oplus \oO_{\mathbb{P}^1}(l_2)\oplus \oO_{\mathbb{P}^1} \big)$ be the projectification of $Y$ with an extended $T$-action on the fiber. The inclusion $i: Y\to \overline{Y}$ induces a map
\begin{equation}i_*: M_{Y,d} \to M_{\overline{Y},d}, \nonumber \end{equation}
to the moduli scheme $M_{\overline{Y},d}$ of one dimensional stable sheaves on $\overline{Y}$ such that
\begin{equation}M_{Y,d}^T\subseteq M_{\overline{Y},d}^T \nonumber \end{equation}
is a closed and open subset. 
Note that $M_{\overline{Y},d}$ is compact since $\overline{Y}$ is compact, then the $T$-fixed locus $M^T_{\overline{Y},d}$ is also compact.
\end{proof}
Then we define equivariant twisted $\DT_3$ invariant of $M_{Y,d}$ by virtual localization formula \cite{GP}.
\begin{defi}\label{equi DT3 for local curve}
In the above setting, the equivariant twisted $\DT_3$ invariant of $M_{Y,d}$ is 
\begin{equation}\DT_3^{\mathrm{twist}}(Y)(d):=(-1)^{d(l_1+l_2)-1}\int_{[M_{Y,d}^T]^{\mathrm{vir}}}e_T(N^{\mathrm{vir}})\in \mathbb{Q}(\lambda_1, \lambda_2),  \nonumber \end{equation}
where $N^{\mathrm{vir}}$ is the virtual normal bundle of $M_{Y,d}^T\hookrightarrow M_{Y,d}$.
\end{defi}
For example in the $d=1$ case, $M_{Y,1}^T$ is one point and 
\begin{equation}\DT_3^{\mathrm{twist}}(Y)(1)=(-1)^{l_1+l_2-1}\lambda_1^{-l_1-1} \lambda_2^{-l_2-1} (\lambda_1+\lambda_2)^{l_1+l_2+1},
\nonumber \end{equation}
which matches with $\mathrm{GW}^{\mathrm{twist}}_{0, 1}(Y)$. 

\subsubsection{The equivariant conjecture}

An equivariant version of Conjecture \ref{conj:GW/DT_3} for non-compact Fano 3-fold 
$Y=\mathrm{Tot}_{\mathbb{P}^1}\big(\oO_{\mathbb{P}^1}(l_1) \oplus \oO_{\mathbb{P}^1}(l_2)\big)$ 
can be stated in the following form, where the multiple cover coefficient is $1/k^3$ instead of $1/k^2$ due to the absence of insertions.
\begin{conj}\label{conj for local curve}
Let $Y=\mathrm{Tot}_{\mathbb{P}^1}\big(\oO_{\mathbb{P}^1}(l_1) \oplus \oO_{\mathbb{P}^1}(l_2)\big)$ with $l_1+l_2\geqslant -1$. Then we have
\begin{equation}\mathrm{GW}^{\mathrm{twist}}_{0, d}(Y)=\sum_{k|d}\frac{1}{k^3} \DT_3^{\mathrm{twist}}(Y)(d/k). \nonumber \end{equation}
\end{conj}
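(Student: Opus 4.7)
The plan is to recast the conjecture as the $\DT_4$/GW multiple cover formula \cite[Conjecture 4.10]{CMT} on $X=K_Y$ and then invoke the partial results already available there. The first observation is that the twisted GW invariant defined in the paper is nothing but the equivariant GW invariant of $X$: the normal bundle $N$ in (\ref{nor bdl N}) is the $T$-equivariant normal bundle of $j(\mathbb{P}^1)\subset X$, and since
\begin{equation*}
N \;=\; (T_Y/T_{\mathbb{P}^1})|_{\mathbb{P}^1} \,\oplus\, K_Y|_{\mathbb{P}^1}
\end{equation*}
in $T$-equivariant $K$-theory, the Euler class $e_T(-\dR p_{\ast} f^{\ast} N)$ factors as the correction from $[\overline{M}_{0,0}(\mathbb{P}^1,d)]$ to $[\overline{M}_{0,0}(Y,d[\mathbb{P}^1])]^{\mathrm{vir}}$ times the twisting factor $e_T(-\dR p_{\ast} f^{\ast} K_Y)$ of (\ref{twist GW}). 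In particular $\mathrm{GW}^{\mathrm{twist}}_{0,d}(Y)=\mathrm{GW}_{0,d}(X)$.

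Combining this with Theorem \ref{thm local curve}, the conjecture for $d=1,2$ is equivalent to \cite[Conjecture 4.10]{CMT} on $X$. For $d=1$, both $M_{Y,1}^T$ and $\overline{M}_{0,0}(\mathbb{P}^1,1)$ are reduced points and the residues agree: the sign $(-1)^{l_1+l_2-1}$ in Definition \ref{equi DT3 for local curve} is precisely the sign produced by the $T$-equivariant duality $e_T(-V)=(-1)^{\operatorname{rk} V}e_T(-V^{\vee})$ applied to the $K$-theoretic difference comparing the $\DT_3$ obstruction bundle to the twisting bundle, and one reads off agreement from (\ref{GW01}). For $d=2$, the proof of \cite[Thm.~4.12]{CMT} is a direct comparison of (\ref{GW02}) with the virtual localization of $\DT_4$ over the Quot-type fixed locus of $M_{X,2}$, verified numerically in the range $|l_1|,|l_2|\leqslant 10$; granted the identification $\DT_3^{\mathrm{twist}}(Y)(2)=\DT_4(X)(2[\mathbb{P}^1])$ from Theorem \ref{thm local curve}, this settles the conjecture for $d=2$ in that range.

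The main obstacle to proving the conjecture in full generality is twofold. First, extending Theorem \ref{thm local curve} to $d\geqslant 3$ requires identifying the localized virtual class on $M_{Y,d}^T\cong M_{X,d}^T$ with the $\DT_4$ virtual class beyond the regime where the fixed locus is a simple disjoint union of Quot schemes; the combinatorics of $T$-fixed one-dimensional sheaves with nilpotent thickenings into the normal directions of $\mathbb{P}^1$ grows rapidly, and a choice of orientation must be tracked through the comparison. Second, even granted such an identification, the sum-over-partitions formula on the $\DT_4$ side is not visibly equal to the Kontsevich graph sum on the GW side; a clean match would amount to an Aspinwall--Morrison-type multiple cover formula in $\DT_4$, which is open. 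A reasonable intermediate target would be to establish the conjecture for all $d$ when $l_1,l_2\geqslant 0$, where both fixed loci admit a uniform description as thickened copies of $\mathbb{P}^1$ in $\oO(l_1)\oplus\oO(l_2)$ and the problem reduces to a purely combinatorial generating-function identity.
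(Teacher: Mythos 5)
Your proposal is essentially the paper's own route: the statement is posed as a conjecture, and the paper likewise only establishes it for $d=1$ and for $d=2$ with $|l_1|,|l_2|\leqslant 10$, by identifying $\DT_3^{\mathrm{twist}}(Y)(d)$ with the $\DT_4$ invariants of $X=K_Y$ from \cite{CMT} via a Serre-duality sign comparison (Theorem \ref{thm local curve}) and then quoting the verified cases of the $\mathrm{GW}/\DT_4$ correspondence there. Your account of the verified cases, of the identification $\mathrm{GW}^{\mathrm{twist}}_{0,d}(Y)=\mathrm{GW}_{0,d}(X)$, and of why the general case remains open matches the paper.
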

In \cite[(4.9), (4.17)]{CMT}, equivariant $\DT_4$ invariants for $X=K_Y$ are defined for degree one and two classes by choosing suitable square root of the virtual normal bundle. Below we show that Definition \ref{equi DT3 for local curve} recovers them and can be viewed as 
a general definition of equivariant $\DT_4$ invariants for all degree curves classes on such $X$.
\begin{thm}\label{thm local curve}
Definition \ref{equi DT3 for local curve} for $Y=\mathrm{Tot}_{\mathbb{P}^1}\big(\oO_{\mathbb{P}^1}(l_1) \oplus \oO_{\mathbb{P}^1}(l_2)\big)$ recovers the definition of equivariant $\DT_4$ invariants for $X=K_Y$ in (4.9), (4.17) of 
\cite{CMT}, i.e. for $d=1,2$,
\begin{equation}\DT_3^{\mathrm{twist}}(Y)(d)=\DT_4(X)(d\,[\mathbb{P}^1]). \nonumber \end{equation}
In particular, Conjecture \ref{conj for local curve} is true when 
(i) $d=1$ and (ii) $d=2$, $|l_1|,|l_2|\leqslant 10$.
\end{thm}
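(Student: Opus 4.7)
The plan is to prove the identification $\DT_3^{\mathrm{twist}}(Y)(d)=\DT_4(X)(d\,[\mathbb{P}^1])$ for $d=1,2$ by comparing localization formulas on the two sides, and then to deduce the listed cases of Conjecture~\ref{conj for local curve} by invoking the corresponding equivariant $\mathrm{GV}/\DT_4$ verifications from \cite[Sect.~4]{CMT}.

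First, I would reduce both invariants to integrals over the common $T$-fixed locus. The $T$-equivariant isomorphism $\iota_{\ast}\colon M_{Y,d}\xrightarrow{\sim}M_{X,d}$ from the preceding lemma identifies the $T$-fixed loci, so Definition~\ref{equi DT3 for local curve} and the CMT definitions (4.9), (4.17) both produce integrals of equivariant Euler-class expressions against the same compact cycle; only the integrands and the choice of sign/orientation need to be compared. The key input is the decomposition
\[
\Ext^{i}_{X}(\iota_{\ast}F,\iota_{\ast}F)\;\cong\;\Ext^{i}_{Y}(F,F)\,\oplus\,\Ext^{i-1}_{Y}(F,F\otimes K_Y),
\]
coming from $L\iota^{\ast}\iota_{\ast}F\cong F\oplus F\otimes K_Y^{-1}[1]$ (the normal bundle of $Y\subset X$ is $K_Y$). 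Combined with equivariant Serre duality $\Ext^{i-1}_{Y}(F,F\otimes K_Y)\cong\Ext^{4-i}_{Y}(F,F)^{\vee}$ on the Fano $3$-fold $Y$ and the vanishings $\Ext^{0}_Y(F,F\otimes K_Y)=\Ext^{3}_Y(F,F)=0$ forced by stability, the alternating sum of $T$-characters reduces to
\[
\chi_T\bigl(\Ext^{1}_X-\Ext^{2}_X+\Ext^{3}_X\bigr)\;=\;v+v^{\vee},
\]
where $v=\chi_T(\Ext^{1}_Y-\Ext^{2}_Y)$ is the $\DT_3$ virtual tangent character on $Y$ and $v^{\vee}$ denotes the weight-dual character. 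Restricting to moving parts, $N^{\mathrm{vir}}_X=N^{\mathrm{vir}}_Y+(N^{\mathrm{vir}}_Y)^{\vee}$ in $T$-equivariant $K$-theory, and a choice of orientation for $\DT_4$ selects a square root
\[
\sqrt{e_T(N^{\mathrm{vir}}_X)}\;=\;\epsilon\cdot e_T(N^{\mathrm{vir}}_Y),\qquad \epsilon\in\{\pm 1\}.
\]

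Matching $\epsilon$ against the global factor $(-1)^{d(l_1+l_2)-1}$ built into Definition~\ref{equi DT3 for local curve} is what I expect to be the main obstacle. The parity $(-1)^{d(l_1+l_2)-1}=(-1)^{c_1(Y)\cdot d[\mathbb{P}^1]-1}$ controls both the difference between $e_T(N^{\mathrm{vir}}_Y)$ and $e_T((N^{\mathrm{vir}}_Y)^{\vee})$ and the orientation convention made in \cite[(4.9),(4.17)]{CMT}; the equality of these two signs has to be verified by hand. For $d=1$, $M_{Y,1}^{T}$ is a single reduced point and the comparison reduces to an explicit weight computation matching the displayed formula for $\DT_3^{\mathrm{twist}}(Y)(1)$ with \cite[(4.9)]{CMT} (and with \eqref{GW01}). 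For $d=2$, one enumerates the $T$-fixed sheaves in $M_{Y,2}^{T}$ (classified in \cite[Sect.~4]{CMT}), applies the decomposition above component by component, and checks that the square root with the orientation of \cite[(4.17)]{CMT} matches the $\DT_3$ expression of Definition~\ref{equi DT3 for local curve} term by term; this is a finite, mechanical calculation once the sign is pinned down.

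With $\DT_3^{\mathrm{twist}}(Y)(d)=\DT_4(X)(d\,[\mathbb{P}^1])$ established for $d=1,2$, Conjecture~\ref{conj for local curve} is immediate. The equivariant twisted GW invariant $\mathrm{GW}^{\mathrm{twist}}_{0,d}(Y)$ is, by construction, the equivariant GW invariant of $X=K_Y$ computed by localization along the zero section: the twist class $e_T(-\dR p_{\ast}f^{\ast}N)$ appearing in the definition inserts exactly the three line-bundle factors of \eqref{nor bdl N}. Hence the multiple cover identity
\[
\mathrm{GW}^{\mathrm{twist}}_{0,d}(Y)\;=\;\sum_{k\mid d}\frac{1}{k^{3}}\,\DT_4(X)\bigl((d/k)\,[\mathbb{P}^1]\bigr)
\]
proved in cases (i) $d=1$ and (ii) $d=2,\ |l_1|,|l_2|\leqslant 10$ by \cite[Thm.~4.12]{CMT} transfers directly, via our identification, to Conjecture~\ref{conj for local curve} in those cases.
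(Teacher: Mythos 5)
Your proposal follows essentially the same route as the paper: identify $M_{Y,d}^T$ with $M_{X,d}^T$ via $\iota_*$, observe that $N^{\mathrm{vir}}_X=N^{\mathrm{vir}}_Y+(N^{\mathrm{vir}}_Y)^{\vee}$ in equivariant $K$-theory by the pushforward/Serre-duality decomposition, and reduce the identity to matching the sign by which the square root chosen in \cite[(4.9),(4.17)]{CMT} differs from $e_T(N^{\mathrm{vir}}_Y)$, then quote \cite[Thm.~4.12]{CMT} for the last assertion. The one substantive shortfall is that you defer exactly the step that constitutes the content of the proof: you declare the sign comparison ``a finite, mechanical calculation once the sign is pinned down,'' but pinning it down is the theorem. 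It is not a single abstract orientation sign: the CMT square root is obtained from $N^{\mathrm{vir}}_Y$ by dualizing a specific collection of terms, each contributing $(-1)^{\chi}$ of the corresponding Euler characteristic, and one must check that the product of these contributions equals $(-1)^{d(l_1+l_2)-1}$ \emph{independently} of the thickening data $A$ and of $l_1,l_2$ separately. The paper does this explicitly for the $d=2$ fixed sheaves: writing out $\chi_Y(F,F)^{\mathrm{mov}}$ and $\chi_X(F,F)^{1/2,\mathrm{mov}}$ term by term, the total discrepancy is $(-1)^{\Delta}$ with
\begin{equation*}
\Delta=\chi(\oO(l_3))+\chi(\oO(l_1+l_3))+\chi(\oO(l_2+l_3))-\chi(A\otimes\oO(l_3))+\chi(A\otimes\oO(l_1+l_3))+\chi(A\otimes\oO(l_2+l_3))-\chi(A\otimes\oO(-2))=2l_3+2(l_1+l_2+l_3)+5,
\end{equation*}
so $(-1)^{\Delta}=-1=(-1)^{2(l_1+l_2)-1}$, and the $A$-dependence cancels. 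Without carrying out this count (or an equivalent argument for why the product of the Serre-duality signs collapses to the claimed parity), your writeup establishes the framework but not the equality.
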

\begin{proof}
When $d=1$, it is already computed explicitly as above. When $d=2$, say we take a $T$-fixed $\oO_{\mathbb{P}^1}(l_1)$-thickened 
sheaf $[F]\in M_{Y,2}^T$, then $F=F_0+F_1\cdot t_1^{-1}$ in the $T$-equivariant $K$-theory of $Y$, with $F_0, F_1$ are line bundles on $\mathbb{P}^1$ with weight $0$, $t_1^{-1}$. Therefore, 
\begin{align*}
\chi_Y(F, F)&=\chi_Y(j_{\ast}F_0, j_{\ast}F_0)+\chi_Y(j_{\ast}F_0, j_{\ast}F_i) \otimes t_i^{-1} +
\chi_Y(j_{\ast}F_i, j_{\ast}F_0)\otimes t_i + \chi_Y(j_{\ast}F_i, j_{\ast}F_i)\\
&=2\big(\chi(\oO_{\mathbb{P}^1})-\chi(N_{\mathbb{P}^1/Y})+\chi(\wedge^2 N_{\mathbb{P}^1/Y})\big)+\big(\chi(A)-\chi(A \otimes N_{\mathbb{P}^1/Y})+\chi(A \otimes \wedge^2 N_{\mathbb{P}^1/Y})\big)\otimes t_i^{-1} \\
&+\big(\chi(A^{-1})-\chi(A^{-1} \otimes N_{\mathbb{P}^1/Y})+\chi(A^{-1}\otimes\wedge^2 N_{\mathbb{P}^1/Y})\big)\otimes t_i,
\end{align*}
where $N_{\mathbb{P}^1/Y}=\oO(l_1)\otimes t_1 \oplus \oO(l_2)\otimes t_2$ and $A:=F_i \otimes F_0^{\vee}$. The movable part is 
\begin{align*}
\chi_Y(F, F)^{\mathrm{mov}}
&=-2\chi(\oO(l_1))\otimes t_1-2\chi(\oO(l_2))\otimes t_2+2\chi(\oO(l_1+l_2))\otimes t_1t_2 \\
&+\chi(A)\otimes t^{-1}_1-\chi(A\otimes \oO(l_2))\otimes t^{-1}_1t_2+\chi(A\otimes \oO(l_1+l_2))\otimes t_2 \\
&+\chi(A^{-1})\otimes t_1-\chi(A^{-1}\otimes \oO(l_1))\otimes t^2_1-\chi(A^{-1}\otimes \oO(l_2))\otimes t_1t_2\\
&+\chi(A^{-1}\otimes \oO(l_1+l_2))\otimes t^2_1t_2.
\end{align*}
In \cite[Sect. 4.4]{CMT}, we view $F$ as a sheaf in $X=K_Y$ by push-forward via zero section and a square root
of $\chi_X(F,F)$ is chosen as follows
\begin{align*}
\chi_X(F, F)^{1/2}
&=2\chi(\oO)-\chi(\oO(l_1))\otimes t_1-\chi(\oO(l_2))\otimes t_2-\chi(\oO(l_3))\otimes t_3 \\
&+\chi(\oO(l_1+l_2))\otimes t_1t_2+\chi(\oO(l_1+l_3))\otimes t_1t_3+\chi(\oO(l_2+l_3))\otimes t_2t_3 \\
&+\chi(A)\otimes t^{-1}_1-\chi(A\otimes \oO(l_1))-\chi(A\otimes \oO(l_2))\otimes t^{-1}_1t_2-\chi(A\otimes \oO(l_3))\otimes t^{-1}_1t_3 \\
&+\chi(A\otimes\oO(l_1+l_2))\otimes t_2+\chi(A\otimes\oO(l_1+l_3))\otimes t_3+\chi(A\otimes\oO(l_2+l_3))\otimes t_1^{-1}t_2t_3 \\
&-\chi(A\otimes \oO(-2))\otimes t^{-1}_1,
\end{align*}
where $t_3:=t_1^{-1}t_2^{-1}$ and $l_3:=-l_1-l_2-2$. 

By Serre duality, terms in $\chi_X(F, F)^{1/2,\mathrm{mov}}$ and $\chi_Y(F, F)^{\mathrm{mov}}$ are in 1-1 correspondence and 
their equivariant Euler classes (after putting $\lambda_3=-\lambda_1-\lambda_2$ in $\chi_X(F, F)^{1/2,\mathrm{mov}}$)
%\begin{equation}\chi_X(F, F)^{1/2,\mathrm{mov}}=\chi_Y(F, F)^{\mathrm{mov}}. \nonumber \end{equation} 
differ by a sign. In fact, the difference appears whenever we use Serre duality, for instance, Serre duality gives 
\begin{equation}\chi(\oO(l_3))=-\chi(\oO(l_1+l_2)), \nonumber \end{equation}
whose equivariant Euler class satisfies 
\begin{equation}e_T\big(\chi(\oO(l_3))\otimes t_3\big)=\lambda_3^{l_3+1}, \quad 
e_T\big(-\chi(\oO(l_1+l_2))\otimes t_1t_2\big)=(\lambda_1+\lambda_2)^{l_3+1}, \nonumber \end{equation}
which differs by a sign $(-1)^{l_3+1}$ after taking $\lambda_3=-\lambda_1-\lambda_2$.

The product of all sign differences is $(-1)^\Delta$ with
\begin{align*}\Delta&=\chi(\oO(l_3))+\chi(\oO(l_1+l_3))+\chi(\oO(l_2+l_3)) \\ 
&-\chi(A\otimes \oO(l_3))+\chi(A\otimes\oO(l_1+l_3))+\chi(A\otimes\oO(l_2+l_3))-\chi(A\otimes \oO(-2)) \\
&=2l_3+2(l_1+l_2+l_3)+5.  \end{align*}
So $(-1)^\Delta=(-1)=(-1)^{2(l_1+l_2)+1}$, i.e. the sign in Definition \ref{equi DT3 for local curve}.

Finally by \cite[Thm. 4.12]{CMT}, we know Conjecture \ref{conj for local curve} is true for $d=1$ and $d=2$ when $|l_1|,|l_2|\leqslant 10$.
\end{proof}

${}$ \\
\textbf{Acknowledgement}.
The author is grateful to the referee for carefully reading the manuscript and giving helpful comments 
which improve the exposition of the paper.
This work is done while the author was in Oxford supported by The Royal Society Newton International Fellowship.
The author is currently supported by the World Premier International Research Center Initiative (WPI), MEXT, Japan.

\end{document}